\newcommand{\ABS}[1]{{{\left| #1 \right|}}} 
\newcommand{\BRA}[1]{{{\left\{#1\right\}}}} 
\newcommand{\PAR}[1]{{{\left(#1\right)}}} 
\newcommand{\bgt}{\begin{itemize}}
\newcommand{\ent}{\end{itemize}}
 \newcommand{\IND}{\mathbf{1}}
\newcommand{\tr}{ \operatorname{tr}}
\newcommand{\Tr}{\operatorname{Tr}}
\newcommand{\E}{\mathbb{E}}
\newcommand{\dE}{\mathbb{E}}
\newcommand{\R}{\mathbb{R}}
\newcommand{\C}{\mathbb{C}}\newcommand{\dC}{\mathbb{C}}
\newcommand{\cH}{\mathcal{H}}
\newcommand{\veps}{\varepsilon}
\newcommand{\bbm}{\begin{bmatrix}}
\newcommand{\ebm}{\end{bmatrix}}
\newcommand{\bes}{\begin{equation*}}
\newcommand{\ees}{\end{equation*}}
\newcommand{\be}{\begin{equation}}
\newcommand{\ee}{\end{equation}}
\newcommand{\beqy}{\begin{eqnarray}}
\newcommand{\eeqy}{\end{eqnarray}}
\newcommand{\beq}{\begin{eqnarray*}}
\newcommand{\eeq}{\end{eqnarray*}}
\newcommand{\bpm}{\begin{pmatrix}}
\newcommand{\epm}{\end{pmatrix}}
\newtheorem{theorem}{Theorem}[section]
\newtheorem{proposition}[theorem]{Proposition} 
\newtheorem{lem}[theorem]{Lemma}
\newtheorem{cor}[theorem]{Corollary}
\long\def\symbolfootnote[#1]#2{\begingroup
\def\thefootnote{\fnsymbol{footnote}}\footnote[#1]{#2}\endgroup}
\def\eq{\begin{eqnarray*}}
\def\qe{\end{eqnarray*}}
\def\eqa{\begin{eqnarray}}
\def\qea{\end{eqnarray}}
\def\cA{{\mathcal A}}
\def\cE{{\mathcal E}}
\def\cK{{\mathcal K}}
\def\tr{\mbox{Tr}\;}
\def\e{{\varepsilon}}
\def\bR{{\mathbb R}}
\def\bC{{\mathbb C}}
\def\bE{{\mathbb E}}
\def\bP{{\mathbb P}} \def\dP{{\mathbb P}}
\newcommand{\EMAIL}[1]{E-mail:~\texttt{\href{mailto:#1}{#1}}}
\newcommand{\AND}{{\quad \hbox{ and } \quad}}
\def\stab{\mathrm{Stab}}
\def\cF{{\mathcal F}}
\def\rank{\mathrm{rank}}
\title{Delocalization at small energy for heavy-tailed
random matrices}
\author{Charles Bordenave\footnote{CNRS \& Universit\'e Toulouse III, France. \EMAIL{charles.bordenave@math.univ-toulouse.fr}.} \,  and Alice Guionnet\footnote{CNRS \& \'Ecole Normale Sup\'erieure de Lyon, France, and  MIT, Cambridge, USA. Partially supported by the Simons Foundation and by NSF Grant DMS-1307704. \EMAIL{aguionne@ens-lyon.fr}.}
}
\begin{document}
\maketitle

\begin{abstract}
We prove that the eigenvectors associated to small enough eigenvalues of an heavy-tailed symmetric random matrix  are delocalized with probability tending to one as the size of the matrix grows to infinity. The delocalization is measured thanks to a simple criterion related to the inverse participation ratio which computes an average ratio of $L^4$ and $L^2$ -norms of vectors. In contrast, as a consequence of a previous result, for random matrices with sufficiently heavy tails, the eigenvectors associated to large enough eigenvalues are localized according to the same criterion. The proof is based on a new analysis of the fixed point equation satisfied asymptotically by the law of a diagonal entry of the resolvent of  this matrix. 
\end{abstract}

\section{Introduction}

Anderson localization has attracted a lot of interest in both mathematical and physical communities recently. One of the most tractable  model to study this phenomenon is given by random Schr\"odinger operators on trees, see notably \cite{AT,AAT,MR1492789,AW}.
It was shown that at small energy the system displays delocalized waves whereas at large energy waves are localized. This phenomenon is  related  to the transition between a continuous spectrum and a discrete spectrum.  Even, a transition between these two phases at a given energy, the so-called mobility edge, could be proved \cite{AW}.  Such a
a transition is expected to happen in much more  general settings, see e.g. \cite{PH}.  In this article we shall prove the existence of a similar phenomenon for random matrices with heavy tails, as conjectured in \cite{BouchaudCizeau,Slanina,TBT}. This is in  contrast with the full delocalization observed for light tails Wigner matrices \cite{ESY09a, EKY11, ESY09b}.
Indeed, we shall prove that for L\'evy matrices with heavy enough tail, eigenvectors are delocalized for small enough energy whereas they are localized for large enough energy. We are not able to prove a sharp transition but the mobility edge value is predicted in \cite{TBT} based on the replica trick. In fact, we already proved in \cite{BG} that eigenvectors are delocalized provided the entries have roughly speaking  finite $L^1$-norm, whereas a localization phenomenon appears for sufficiently heavy tail and large energy.
However, we left open the question of proving delocalization at small energy and very heavy tails, or at least to exhibit a single criterion which would allow to distinguish these two phases. In this article we remedy this point. 

Let us first describe roughly our main results. Consider a symmetric  matrix $A$ of size $n \times n$ with independent equidistributed  real entries above the diagonal. Assume that the tail of $A_{ij}$ is such that, for some $0< \alpha < 2$, 
$$ n \mathbb P(|A_{ij}|\ge t)\simeq_{t\to\infty} t^{-\alpha}\,,$$
(in a sense which will be made precise later on). Then, for $z \in \C \backslash \R$, consider the following fractional moment of the resolvent:
$$y^n_z(\beta)=\frac{1}{n}\sum_{k=1}^n (\Im (A-z I)^{-1}_{kk})^\beta\,.$$
For $\beta=1$, as $n$ goes to infinity and then $z$ goes to $E \in \R$ on the real line, $y^n_z(1) / \pi $ converges towards the spectral density which turns out to be positive \cite{BAG1,BDG09,BCC11}.
However, we proved in \cite{BG} that for $\beta=\alpha/2$, and for sufficiently heavy tails ($0 < \alpha<2/3$),  as $n$ goes to infinity and then $z$ goes to $E\in\mathbb R$ large enough, $y^n_z(\alpha/2)$ goes to zero. This can be shown to imply a localization of eigenvectors with large enough eigenvalue or energy. On the other end, we prove in the present article that for $0 < \alpha<2$ (outside a countable subset of $(0,2)$), 
  as $n$ goes to infinity and then $z$ goes to $E\in\mathbb R$ small enough, $y^n_z(\alpha/2)$ is bounded below by a positive constant. Back to eigenvectors, this in turn allows to prove delocalization at small energies versus localization at high energies 
  according to the following criterion. Consider an orthonormal basis of $\R^n$ of eigenvectors of $A$. Let $I=[E-\eta_n,E+\eta_n]$ be an interval of the real line so that $\eta_n$ goes to zero as $n$ goes to infinity. Let $\Lambda_I$ denote the set of eigenvectors of $A$ with eigenvalues in $I$ and set, if $\Lambda_I$ is not empty, 
  $$Q_I =n\sum_{k=1}^n\left(\frac{1}{|\Lambda_I|}\sum_{u\in\Lambda_I} \langle u,e_k\rangle^2\right)^2\,.$$
 We will explain below why $Q_I$ is related to the nature of eigenvectors in $\Lambda_I$. Then, the main result of this article is that  for  $\eta_n$ going to zero more slowly than $n^{-\rho}$ for some $\rho>0$, for $E$ large enough, $Q_I$ goes to infinity (Theorem \ref{th:locvect}), whereas for $E$ small enough, it remains finite (Theorem \ref{th:main}).

Let us now describe our results more precisely. 
For integer $n \geq 1$, we consider an array $(A_{ij})_{1 \leq i \leq j \leq n}$ of  i.i.d. real random variables and set, for $i > j$, $A_{ij} = A_{ji}$. Then, we define the random symmetric matrix:
$$A = ( A_{ij} )_{1 \leq i , j \leq n}.$$
The eigenvalues of the matrix $A$ are real and are denoted by $\lambda_n \leq \cdots \leq \lambda_1$. We also consider an orthogonal basis $( u_1, \ldots, u_n ) $ of $\R^n$ of eigenvectors of $A$, for $1 \leq k \leq n$, $A u_k = \lambda_k u_k$.

If $X_{11} = \sqrt n A_{11}$ is a random variable independent of $n$ and with variance equal to $1$, then $A$ is a normalized Wigner matrix. In the large $n$ limit, the spectral properties of this matrix are now well understood, see e.g. \cite{BaiSil,BAG21,AGZ10,erdossurvey,MR2784665}. The starting point of this analysis is the Wigner's semicircle law, which asserts that for any interval $I \subset \R$, the proportion of eigenvalues in $I$ is asymptotically close to 
$\mu_{sc}(I) $ where $\mu_{sc}$ is the distribution with support $[-2,2]$ and density $f_{sc} ( x) = \frac 1 {2 \pi} \sqrt { 4 - x ^2}$. Many more properties of the spectrum are known. For example, if $X_{11}$ is centered and has a subexponential tail, then, from \cite{ESY09a, ESY09b}, for any $ p \in (2,\infty]$ and $\veps >0$, with high probability,
\begin{equation}\label{eq:delocWi}
\max_{1 \leq k \leq n}   \|u_k \|_p  \leq n^{ 1 / p - 1/2 + \veps },
\end{equation}
 where  for $u \in \R^n$, $\| u  \|_p =  \left( \sum_{i=1} ^n |u_i|^p  \right)^{1 /  p}$ and $\| u \|_\infty = \max |u_i|$. This implies that the eigenvectors are strongly delocalized with respect to the canonical basis.

In this paper, we are interested in heavy-tailed matrices, it corresponds to the assumption that the measure defined on $\R_+  = (0,\infty)$,
\begin{equation}\label{eq:convLambda}
L_n ( \cdot )  = n \dP ( |A_{11}|^2 \in \cdot) 
\end{equation}
converges vaguely as $n$ goes to infinity to a (non trivial) Radon measure $L$ on $\R_+$. For example, if $A_{11}$ is a Bernoulli $0$-$1$ variable with mean $c /n$, then $L$ is equal $c \delta_1$. In this case, (up to the irrelevant diagonal terms) $A$ is the adjacency matrix of an Erd\H{o}s-R\'enyi graph, where each edge is present independently with probability $c/n$.  In this paper, we will focus on L\'evy matrices introduced by Bouchaud and Cizeau \cite{BouchaudCizeau}. They can be defined as follows. We fix $0 < \alpha < 2$ and assume that $X_{11} = n^{1/\alpha} A_{11}$ is a random variable independent of $n$ such that 
\begin{equation}\label{eq:tailX11}
\bP ( |X_{11} |\geq t ) \sim_{t \to \infty} t^{-\alpha}.
\end{equation}
In the above setting, they correspond to the case $L= (\alpha/2) x^{-\alpha/2-1} dx$, where $dx$ is the Lebesgue measure on $\R_+$. For technical simplicity, we will further restrict to $X_{11}$ a symmetric $\alpha$-stable random variable such that for all $t \in \bR$, 
$$
\bE  \exp ( i t X_{11}  ) = \exp ( - \sigma^\alpha |t |^\alpha ),
$$
with $\sigma^{\alpha} = \pi /   ( 2 \sin(\pi \alpha / 2)\Gamma(\alpha)) $. With this choice, the random variable $X_{11}$ is normalized in the sense that \eqref{eq:tailX11} holds.

The spectrum of heavy-tailed matrices is far from being perfectly understood. It differs significantly from classical Wigner matrices. In the L\'evy case \eqref{eq:tailX11}, for any interval $I \subset \R$, the proportion of eigenvalues in $I$ is asymptotically close to 
$\mu_\star(I) $ where the probability measure $\mu_\star$ depends on $0 < \alpha <2$, it is symmetric, has support $\R$, a bounded density $f_\star$ which is analytic outside a finite set of points. Moreover, $f_\star (0)$ has an explicit expression and as $x$ goes to $\pm \infty$, $f_\star ( x) \sim (\alpha / 2) |x|^{-\alpha   -1}$, see \cite{PhysRevE.75.051126,BG08,BDG09,BCC11}.


The eigenvectors of L\'evy matrices have been rigorously studied in \cite{BG}. It is shown there that if $1 < \alpha < 2$, there is a finite set $\cE_\alpha$ such  if $K \subset \R\backslash \cE_\alpha$ is a compact set, for any $\veps > 0$, with high probability,
\begin{equation}\label{eq:delocLe12}
\max \left\{\|u_k \|_\infty : \lambda_k \in K \right\} \leq n^{- \rho + \e },
\end{equation}
where $\rho = (\alpha - 1 )  / ( (2 \alpha) \vee (8 - 3 \alpha) ). $ 
Since $\|u \|_p \leq \|u\|^{1 - 2/p}_{\infty} \|u \|_2 ^{2/p}$, it implies that the $L^p$-norm of most eigenvectors is $O(n^{2\rho/p - \rho + o(1)})$. Notice that when $\alpha \to 2$, then $\rho \to 1/4$ and it does not match with \eqref{eq:delocWi}. It is expected that the upper bound \eqref{eq:delocLe12} is pessimistic.

When $0 < \alpha < 1$, the situation turns out to be very different. In \cite{BouchaudCizeau}, Bouchaud and Cizeau have conjectured the existence of a mobility edge, $E_\alpha >0$, such that all eigenvectors $u_k$ with $|\lambda_k| < E_\alpha - o(1)$ are delocalized in a  sense similar to \eqref{eq:delocLe12} while eigenvectors $u_k$ with $|\lambda_k| > E_\alpha+ o(1)$ are localized, that is they have a sparse representation in the canonical basis. In \cite{BG}, the existence of this localized phase was established when $0 < \alpha < 2/3$. More precisely, for $I$ an interval of $\bR$, as above $\Lambda_I$ is the set of eigenvectors whose eigenvalues are in $I$. Then, if $\Lambda_I$ is not empty, for $ 1 \leq k \leq n$, we set
$$
P_I  (k) =  \frac { 1} {| \Lambda_I | } \sum_{u \in \Lambda_I} \langle u , e_k \rangle ^2,
$$
where $ 
|\Lambda_I |$ is the cardinal of $\Lambda_I$. In words, $P_I(k)$ is the average amplitude of the $k$-th coordinate of eigenvectors in $\Lambda_I$.  Theorem 1.1 in \cite{BG} asserts that $|\Lambda_I|$ is of order $n |I|$ for intervals $I$ of length $|I| \geq n ^{-\rho}$ for some $\rho > 0$ (depending on $\alpha$). By construction, $P_I$ is a probability vector:
$$
\sum_{k =1} ^ n  P_I  (k) =  1.
$$
Observe also that $P_{\R} (k) = 1/n$.  If the eigenvectors in $\Lambda_I$ are localized and $I$ contains few eigenvalues, then we might expect that for  some $k$, $P_I  (k) \gg 1/n$, while for most of the others $P_I(k) = o(1/n)$.  Alternatively, if the eigenvectors in $\Lambda_I$ are well delocalized, then $P_I(k) = O( 1/n)$ for all $k$. More quantitatively, we will measure the (de)localization of eigenvectors through 
\begin{equation}\label{eq:defQI}
Q_I  = n \sum_{k =1} ^ n  P_I  (k)^2\;  \in [1,n].
\end{equation}
The scalar $\log(Q_I)$ is proportional to the  R\'enyi divergence of order $2$ of $P_I$ with respect to the uniform measure $(1/n,\ldots, 1/n)$. If $Q_I \leq C$ then for any $t >0$, the number of $k$ such that $P_I(k) \geq t \sqrt C  / n$ is at most $ n / t^2$. The scalar $Q_I$ is also closely related to the {\em inverse participation ratio} which can be defined as
$$
\Pi_I =  \frac { n } {| \Lambda_I | } \sum_{u \in \Lambda_I} \sum_{k=1}^n\langle u , e_k \rangle ^4 = \frac { n } {| \Lambda_I | } \sum_{u \in \Lambda_I}  \| u \|_4^4  \; \in [1,n].
$$
Using $\sum_{u \in S}  x^4_u  \leq  (\sum_{u \in S}  x^2_u )^2 \leq |S| \sum_{u \in S}  x^4_u$, we find
$$
Q_I \leq \Pi_I \leq Q_I  |\Lambda_I|.
$$

We will write that a sequence of events $E_n$ defined on our underlying probability space holds with overwhelming probability, if for for any $ t > 0$, $n^t \dP ( E_n^c)$ goes to $0$ as $n$ goes to infinity. As we shall check, \cite[Theorem 1.3]{BG} implies the following localization statement. 
\begin{theorem}[Localization of eigenvectors of large eigenvalues \cite{BG}]\label{th:locvect}
Let $0 < \alpha < 2/3$, $0 < \kappa < \alpha /2$ and $\rho = \alpha / (2 + 3 \alpha)$. There exists $E_{\alpha,\kappa}$ such that for any compact $K \subset [-E_{\alpha,\kappa},E_{\alpha,\kappa}]^c$, there is a  constant $c_1 >0$ and, if $I\subset K$ is an interval of  length $ |I| \geq  n^{-\rho}  ( \log  n)^2$,
 then$$
Q_I \geq c_1 |I|^ { -\frac{ 2\kappa}{2 - \alpha}} ,
$$
with overwhelming probability.
\end{theorem}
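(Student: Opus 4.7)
The plan is to bound $Q_I$ from below by a fractional moment of the resolvent and then feed in the quantitative estimates already proved in \cite{BG}. Set $\eta = |I|/2$ and $z = E + i\eta$. The spectral decomposition of $A$ gives, for each $k \in \{1,\ldots,n\}$,
$$
\Im (A - z I)^{-1}_{kk} = \sum_{j=1}^n \frac{\eta \langle u_j, e_k\rangle^2}{(\lambda_j - E)^2 + \eta^2} \geq \frac{1}{|I|}\sum_{u \in \Lambda_I} \langle u, e_k\rangle^2 = \frac{|\Lambda_I|}{|I|}\, P_I(k),
$$
so, writing $S_I(k) = |\Lambda_I|\, P_I(k)$, one obtains $\sum_k S_I(k)^{\alpha/2} \leq |I|^{\alpha/2}\, n\, y^n_z(\alpha/2)$.

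The next step is to exploit the identity $\sum_k S_I(k) = |\Lambda_I|$ through a H\"older interpolation between the exponents $\alpha/2$ and $2$. Choosing $\theta = (2-\alpha)/(4-\alpha)$ so that $1 = 2\theta + (\alpha/2)(1-\theta)$, one gets
$$
|\Lambda_I| \leq \Bigl(\sum_k S_I(k)^2\Bigr)^{\!\theta}\Bigl(\sum_k S_I(k)^{\alpha/2}\Bigr)^{\!1-\theta},
$$
and the exponents simplify (using $(1-\theta)/\theta = 2/(2-\alpha)$) to give the key inequality
$$
Q_I = \frac{n}{|\Lambda_I|^2}\sum_k S_I(k)^2 \geq \Bigl(\frac{|\Lambda_I|}{n|I|}\Bigr)^{\!\alpha/(2-\alpha)} \bigl(y^n_z(\alpha/2)\bigr)^{-2/(2-\alpha)}.
$$

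The proof is then completed by invoking two estimates from \cite{BG}. Their Theorem 1.1 yields $|\Lambda_I| \geq c\, n\, |I|$ with overwhelming probability whenever $|I| \geq n^{-\rho}$ and $I \subset K$, so the first factor is bounded below by a positive constant. The quantitative core of the argument is \cite[Theorem 1.3]{BG}, which should furnish an upper bound of the form $y^n_z(\alpha/2) \leq C |I|^{\kappa}$ with overwhelming probability, valid precisely when $0 < \alpha < 2/3$, $\kappa < \alpha/2$, $E \notin [-E_{\alpha,\kappa}, E_{\alpha,\kappa}]$, and $\eta \geq n^{-\rho}(\log n)^2$. Combining the two bounds immediately produces $Q_I \geq c_1 |I|^{-2\kappa/(2-\alpha)}$. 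The main subtlety, and the step I expect to demand the most care, is to check that \cite[Theorems 1.1 and 1.3]{BG} can be applied uniformly in the interval $I \subset K$ and at the specific spectral parameter $z = E + i|I|/2$; this should reduce to a union bound over a polynomial-size net in $K$ combined with monotonicity of $y^n_z$ in $\eta$ and the Lipschitz property of the eigenvalue counting function.
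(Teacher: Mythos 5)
Your H\"older interpolation between exponents $\alpha/2$ and $2$ (steps 3--4, with the exponent bookkeeping $1 = 2\theta + (\alpha/2)(1-\theta)$, $(1-\theta)/\theta = 2/(2-\alpha)$) is exactly the duality argument the paper uses, and the final arithmetic $Q_I \gtrsim |I|^{-2\kappa/(2-\alpha)}$ matches. Where you diverge is in what you feed into that interpolation. The paper reads \cite[Theorem~1.3]{BG} as giving directly
$n^{\alpha/2-1}\sum_{k} P_I(k)^{\alpha/2} \leq c_1 |I|^\kappa$ with overwhelming probability, and plugs this straight into the H\"older step applied to $\sum_k P_I(k)=1$; no resolvent, no local law for $|\Lambda_I|$, no choice of spectral parameter $z$. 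Your version instead passes through the resolvent: steps 1--2 bound $P_I(k)$ by $|I|\,\Im R_{kk}(z)/|\Lambda_I|$, and then you invoke a presumed resolvent estimate $y^n_z(\alpha/2) \leq C|I|^\kappa$ from \cite{BG} together with $|\Lambda_I|\geq c n|I|$ from their Theorem~1.1. This is a genuinely different route, and it is \emph{not} derivable from the paper's stated version of \cite[Theorem~1.3]{BG}: your step-1 inequality $\Im R_{kk}(z)\geq |\Lambda_I|P_I(k)/|I|$ points the wrong way to turn a bound on $\sum_k P_I(k)^{\alpha/2}$ into a bound on $y^n_z(\alpha/2)$, since the resolvent picks up mass from eigenvalues outside $I$. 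So as written your plan rests on an input you have guessed from \cite{BG} rather than on the estimate the paper actually cites; the safer and shorter move (which the paper takes) is to drop the resolvent and the $|\Lambda_I|$ lower bound entirely and apply your H\"older interpolation directly to the $P_I(k)$ moments. That said, if \cite{BG} does prove the $y_z$ bound as a primary statement, your argument is correct, just with two extra moving parts ($z=E+i|I|/2$ and the local eigenvalue count) that the cleaner proof avoids.
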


In this work, we shall prove the converse of this statement and prove notably that there exists a neighborhood of $0$ where eigenvectors are delocalized. 
\begin{theorem}[Delocalization of eigenvectors of small eigenvalues] \label{th:main}
There exists a countable set $\cA \subset ( 0,2)$ with no accumulation point on $(0,2)$ such that the following holds. Let $\alpha \in (0,2) \backslash \cA$ and $\rho' =  \alpha / ( 4 + \alpha) \wedge 1/4$.  There is $E'_{\alpha} > 0$ and  constants $c_0,c_1 >0$ such that, if $I\subset [-E'_\alpha, E'_\alpha]$ is an interval of  length $ |I| \geq  n^{-\rho'}  ( \log  n)^{c_0}$, then
$$
Q_I \leq c_1,
$$
with overwhelming probability.
\end{theorem}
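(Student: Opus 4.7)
The plan is to relate $Q_I$ directly to a second moment of the resolvent $G(z) = (A - zI)^{-1}$ and then to control this moment via a fixed-point equation satisfied by the limiting law of a diagonal entry $G_{kk}(z)$. For $z = E + i\eta$ with $\eta = |I|/2$ and $I = [E-\eta, E+\eta]$, the spectral theorem gives $\langle \Pi_I e_k, e_k \rangle = \sum_{u \in \Lambda_I} |u(k)|^2 \leq 2\eta \Im G_{kk}(z)$, where $\Pi_I$ is the spectral projector of $A$ onto $I$. Combining this with the local law $|\Lambda_I| \geq c n |I|$ from Theorem 1.1 of \cite{BG}, which holds for $|E| \leq E'_\alpha$ and $|I| \geq n^{-\rho}(\log n)^2$ since the limiting density $f_\star$ is bounded below near $0$, we obtain
\[
Q_I \;=\; \frac{n}{|\Lambda_I|^2}\sum_k \langle \Pi_I e_k, e_k\rangle^2 \;\leq\; \frac{C}{n} \sum_k (\Im G_{kk}(z))^2 \;=\; C\, y^n_z(2).
\]
It therefore suffices to prove $y^n_z(2) \leq C'$ with overwhelming probability for the same range of $E$ and $\eta$.

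For this one uses Schur's complement formula
\[
G_{kk}(z) \;=\; \frac{1}{A_{kk} - z - \sum_{i,j \neq k} A_{ki} G^{(k)}_{ij}(z) A_{jk}},
\]
where $G^{(k)}$ denotes the resolvent of the principal submatrix obtained by deleting row and column $k$. As $n \to \infty$, the weighted local neighborhoods of vertex $k$ converge in the local weak sense to a Poisson weighted infinite tree, and the bilinear form above converges in law to $S(z) = \sum_{i \geq 1} \xi_i G_i(z)$, where $(\xi_i)$ are the atoms of a Poisson point process on $(0, \infty)$ of intensity $\frac{\alpha}{2}x^{-\alpha/2-1}dx$ and $(G_i)$ are i.i.d.\ copies of a random variable $G(z)$ independent of $(\xi_i)$. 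This yields the fixed-point equation
\[
G(z) \;\eqlaw\; \frac{1}{-z - S(z)},
\]
already exploited in \cite{BG,BAG1,BDG09,BCC11} to identify the limiting spectral measure.

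The core new ingredient, and the main obstacle, is a quantitative analysis of this equation at small real energies. Conditional on $(G_i)$, $S(z)$ is a symmetric $\alpha/2$-stable complex random variable whose scale is controlled by fractional moments of $G(z)$. Introducing the moment function $m(s, z) = \bE[(\Im G(z))^s]$, the fixed-point equation becomes a scalar equation in $s$. Linearizing it around its fixed point yields a multiplier $\kappa(s, \alpha, E)$ that depends analytically on $(\alpha, E)$, and the desired bound $m(2, z) \leq C$ uniformly for $\eta \in (0, 1]$ amounts to showing the strict inequality $\kappa(s, \alpha, E) < 1$ on a suitable interval of $s$ containing $2$. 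This inequality is established for $\alpha \in (0, 2) \setminus \cA$ and $|E|$ small enough, where $\cA$ is the countable, discrete set of $\alpha$ at which $\kappa$ attains the critical value $1$ already at $E = 0$. The restriction to small $|E|$ is used to perturb around the symmetric case $E = 0$, where the imaginary and real parts of $G$ decouple and $\kappa(s, \alpha, 0)$ admits an explicit expression that can be analyzed in closed form.

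Finally, to upgrade the infinite-volume bound $\bE[(\Im G(z))^2] \leq C$ to a quantitative bound on $y^n_z(2)$ with overwhelming probability, I would (i) truncate the entries $A_{ki}$ at a polynomial level $n^{\gamma}$ to remove rare huge atoms, (ii) compare $G_{kk}$ with the resolvent of the truncated tree approximation via resolvent perturbation bounds and the local weak convergence of $A$ to the PWIT, and (iii) use a martingale decomposition along the rows of $A$ together with the rank-one stability of the Schur complement formula to concentrate $\frac{1}{n}\sum_k (\Im G_{kk}(z))^2$ around its mean. Tracking the error terms through these reductions yields the quantitative threshold $|I| \geq n^{-\rho'}(\log n)^{c_0}$ with $\rho' = \alpha / (4 + \alpha) \wedge 1/4$.
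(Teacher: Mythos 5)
Your reduction of $Q_I$ to $y^n_z(2)=\frac1n\sum_k(\Im G_{kk}(z))^2$ via the spectral projector bound, and the use of the local law $|\Lambda_I|\geq cn|I|$ from \cite{BG}, match the paper's Lemma~\ref{le:IPRres} exactly. The trouble is in the heart of the argument, the analysis of the fixed-point equation, where your sketch contains a conceptual gap that the paper specifically had to work around.

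You propose to introduce the scalar moment function $m(s,z)=\bE[(\Im G(z))^s]$ and claim ``the fixed-point equation becomes a scalar equation in $s$.'' It does not close at the scalar level. Conditional on $(G_i)$, the random variable $S(z)=\sum_i\xi_i G_i(z)$ is a complex $\alpha/2$-stable variable whose distribution depends on the \emph{joint} law of $(\Re G,\Im G)$, not just on $\Im G$; for real $E\ne0$ the real and imaginary parts do not decouple, and moments of $\Im G(z)=\Im\bigl(-z-S(z)\bigr)^{-1}$ involve the full two-dimensional structure. This is precisely why the paper replaces the scalar $m(s,z)$ by a function-valued order parameter $\gamma_z(u)=\Gamma(1-\alpha/2)\,\frac1n\sum_k(-iR_{kk}(z).u)^{\alpha/2}$ defined on the cone $\cK_1^+$, and studies a genuine functional fixed-point equation $\gamma=G_z(\gamma)$ in the Banach space $\cH_{\alpha/2,\kappa}$ of homogeneous $C^1$ functions with the weighted norm $\|\cdot\|_\kappa$. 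The ``linearization multiplier $\kappa(s,\alpha,E)$'' in your plan is therefore not a scalar but a compact operator $K_\alpha$ on $\cH_{\alpha/2,\kappa}$, and the condition is that $I+K_\alpha$ is an isomorphism.

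Your claim that this linearization ``admits an explicit expression that can be analyzed in closed form'' at $E=0$ is also not available: the paper explicitly says that it could not compute the spectrum of $K_\alpha$. The discreteness of the exceptional set $\cA$ is instead obtained by a rather elaborate route — $K_\alpha$ is shown to be compact, then embedded in a larger operator $H_\alpha$ on $L^2_\kappa(X)$ whose suitable power $H_\alpha^m$ is trace class, then $\alpha\mapsto\det(I-H_\alpha^m)$ is shown to be analytic via the Lebesgue-value/Brislawn expansion of the Fredholm determinant, and finally $\det(I-H_\alpha^m)\ne0$ for $\Im\alpha$ large, so zeros of this analytic function are discrete. Without something playing this role your argument has no mechanism to identify which $\alpha$ must be excluded, and you cannot conclude $m(2,z)\leq C$.

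Finally, your proposed upgrade to finite $n$ (truncation, resolvent perturbation against the PWIT, martingale concentration) is not what is needed to extract the stated exponent $\rho'=\frac{\alpha}{4+\alpha}\wedge\frac14$. The paper instead combines a concentration inequality for $\|\gamma_z-\bE\gamma_z\|_\kappa$ in the $\kappa$-weighted norm (via a net argument over $S_1^+$), an approximate fixed-point inequality $\|\bE\gamma_z-G_z(\bE\gamma_z)\|_\kappa\lesssim(\log n)^c(\eta^{-\alpha/2}\bar M_z^{\alpha/2}+\eta^{-\alpha/2}n^{-\delta/2}+\cdots)$ from Schur's formula, and a bootstrap in $\eta$ (starting from large $\eta$ where the estimates are trivial and stepping $\eta$ down by $n^{-3/\delta}$) to propagate the bounds down to $\eta\sim n^{-\rho'}$. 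Your outline omits both the $\kappa$-weighted norm, which is essential to make the net argument work near the boundary of the quadrant, and the bootstrap, which is what actually produces the quantitative threshold on $|I|$.
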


As we shall see in the course of the proof,  $Q_I$ is finite for $I=[E-\eta,E+\eta]$ with $\eta$  going to zero 
iff the fractional moment of the resolvent  $y_z (\alpha/2) $ is bounded below by a positive constant as $n$ goes to infinity. Our point will therefore be to provide such a bound.

The parameter  $Q_I$ could be replaced for any $p > 1$ by 
$$
 n^{p-1} \sum_{k=1}^n P_I (k)^p.
$$
Then, the statements of Theorem \ref{th:locvect} and Theorem \ref{th:main} are essentially unchanged (up to modifying the value of $\rho'$ in Theorem \ref{th:main} and the exponent $\delta >0$ in the lower bound $Q_I \geq c_1|I|^{-\delta}$ in Theorem \ref{th:locvect}). We have chosen to treat the case $p=2$ for its connection with the inverse participation ratio.

There are still many open problems concerning L\'evy  matrices. 
In the forthcoming Corollary \ref{cor:mainloc}, we prove a local law (i.e. a sharp quantitative estimate of $|\Lambda_I|$ for intervals $I$ of vanishing size) which improves for small value of $\alpha$ on Theorem 1.1 in \cite{BG}. We conjecture that such local law holds for all $\alpha \in (0,2)$ and for all intervals $I $ of length much larger that $1/ n$.

There is no rigorous results on the local eigenvalue statistics for L\'evy matrices, see \cite{TBT} for a recent account of the predictions in the physics literature. It is expected that for $1 < \alpha < 2$ the local eigenvalue statistics are similar to those of the Gaussian Orthogonal Ensemble (GOE), asymptotically described by a sine determinantal point process. For $0 < \alpha < 1$ and energies larger than some $E_\alpha$, we expect that the local eigenvalue statistics are asymptotically described by a Poisson point process. In the regime $0 < \alpha < 1$ and energies smaller than $E_\alpha$, \cite{TBT} also predicts a GOE statistics. Finally, \cite{BouchaudCizeau} conjectured the existence of yet another regime when $1 < \alpha < 2$ at energies larger than some $E_\alpha$.

For $0 < \alpha < 1$, proving the existence of such mobility edge $E_\alpha$ is already a very interesting open problem. The core of the difficulty is to better understand a fixed point equation described in its probabilistic form by \eqref{eq:RDE} which is satisfied by the weak limit of $(A- z I)^{-1}_{11}$ as $n$ goes to infinity. More generally, the L\'evy matrix is an example of a broader class of random matrices with heavy tails. The qualitative behavior of the spectrum depends on the Radon measure $L_n$ in \eqref{eq:convLambda} and its vague limit which we denoted by $L$.  It is a challenging question  to understand how $L$ influences the nature of the spectrum around a given energy (regularity of the limiting spectral measure, localization of eigenvectors, local eigenvalue statistics).

The  paper is organized  as follows. In Section \ref{sec:th1}, we prove that Theorem \ref{th:locvect} is a direct consequence of a result in \cite{BG}. Section \ref{sc:th2} gives an outline of the proof of Theorem \ref{th:main}. The actual proof is contained in Section  \ref{sec:FPI} and Section \ref{sec:rates}.

\section{Proof of Theorem \ref{th:locvect}}

\label{sec:th1}

For $0 < \alpha < 2/3$, $0 < \kappa < \alpha /2$ and $\rho = \alpha / (2 + 3 \alpha)$, it follows from \cite[Theorem 1.3]{BG} that there exists $E_{\alpha,\kappa}$ such that for any compact $K \subset [-E_{\alpha,\kappa},E_{\alpha,\kappa}]^c$, there are constants $c_0,c_1 >0$ and for all integers $n \geq 1$, if $I\subset K$ is an interval of  length $ |I| \geq  n^{-\rho}  ( \log  n)^2$,
 then
\begin{equation}\label{eq:thlocvectp}
n^{\alpha/ 2 -1} \sum_{k=1}^n P_I ( k)^{\alpha /2} \leq c_1 |I|^\kappa,
\end{equation}
with overwhelming probability. We may notice that the logarithm of the left hand side in the above expression is proportional to the R\'enyi divergence of order $\alpha /2$ of $P_I$ with respect to the uniform measure. The smaller it is, more localized is $P_I$ (for explicit bounds see \cite{BG}). 

We may use duality to obtain from \eqref{eq:thlocvectp} a lower bound on $Q_I$.  From H\"older inequality, we write for $0< \veps <1$ and $1/p + 1/q = 1$, 
\begin{eqnarray*}
 1 =   \sum_{k =1 }^n  P_I ( k) & = &  \frac 1 n   \sum_{k =1 }^n  ( n P_I ( k ) ) ^{\veps} (n P_I (k)) ^{1-\veps} \\
& \leq & \left(n^{\veps p -1} \sum_{k=1 }^n  P^{\veps p }_I ( k)\right)^{ 1 / p} \left( n^{(1-\veps)q -1} \sum_{k =1 }^n  P^{(1 - \veps)q}_I (k)\right)^{1 / q}.
\end{eqnarray*}
We choose $\veps = \alpha / ( 4 - \alpha)$ and $ p  = 2 - \alpha / 2$. We have $\veps p = \alpha / 2$, $(1- \veps)q = 2$ and $p/q = 1 - \alpha /2$.  Hence, if the event \eqref{eq:thlocvectp} holds, we deduce that 
\begin{equation*}\label{eq:WIloc}
( c_1 |I |^\kappa ) ^{-\frac{q}{p}} = c'_1 |I|^{-\frac{ 2 \kappa}{ 2 - \alpha}}  \leq  Q_I.
\end{equation*}
It completes the proof of Theorem \ref{th:locvect}. 

\section{Outline of proof of Theorem \ref{th:main}}
\label{sc:th2}
\subsection{Connection with the resolvent}

For $z \in \dC_+ = \{ z \in \C : \Im (z)  >0 \}$, the resolvent matrix of $A$ is defined as
$$R(z) = ( A - z  I) ^{-1}.$$ 

The next lemma shows that the quadratic mean of the diagonal coefficients of the resolvent upper bounds $Q_I$. 

\begin{lem}\label{le:IPRres}
Let $I = [ \lambda - \eta, \lambda + \eta]$ and $z  = \lambda + i \eta \in \C_+$. If $|\Lambda_I | \ne 0$, we have
$$
Q_I   \leq \left(  \frac{n |I|}{|\Lambda_I|} \right)^2  \left( \frac 1 n \sum_{k =1} ^n (\Im R_{kk}(z))^2 \right). 
$$
\end{lem}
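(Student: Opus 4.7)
The plan is to express the diagonal of the imaginary part of the resolvent via the spectral decomposition of $A$, then exploit that the Poisson kernel $\eta/((\lambda_j-\lambda)^2+\eta^2)$ is bounded below by a multiple of $1/|I|$ on the interval $I$. Concretely, writing $A=\sum_{j=1}^n \lambda_j u_j u_j^T$, one obtains
$$
\Im R_{kk}(z) \;=\; \sum_{j=1}^n \frac{\eta\, \langle u_j,e_k\rangle^2}{(\lambda_j-\lambda)^2+\eta^2}.
$$
For each $j$ with $\lambda_j\in I=[\lambda-\eta,\lambda+\eta]$ one has $(\lambda_j-\lambda)^2+\eta^2\le 2\eta^2=\eta|I|$, so the $j$-th summand is at least $\langle u_j,e_k\rangle^2/|I|$. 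Discarding the other (nonnegative) terms yields
$$
\Im R_{kk}(z)\;\ge\;\frac{1}{|I|}\sum_{u\in\Lambda_I}\langle u,e_k\rangle^2\;=\;\frac{|\Lambda_I|}{|I|}\,P_I(k).
$$

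Next I would square this pointwise inequality (the right-hand side is nonnegative) and sum over $k$. Recalling the definition $Q_I=n\sum_{k}P_I(k)^2$ from \eqref{eq:defQI}, this gives
$$
\sum_{k=1}^n (\Im R_{kk}(z))^2 \;\ge\;\left(\frac{|\Lambda_I|}{|I|}\right)^2 \sum_{k=1}^n P_I(k)^2 \;=\;\left(\frac{|\Lambda_I|}{|I|}\right)^2 \frac{Q_I}{n}.
$$
Dividing by $n$ and rearranging produces exactly the claimed bound
$$
Q_I\;\le\;\left(\frac{n|I|}{|\Lambda_I|}\right)^{\!2}\!\left(\frac{1}{n}\sum_{k=1}^n (\Im R_{kk}(z))^2\right).
$$

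There is no real obstacle here: the argument is a two-line use of the spectral theorem together with the elementary pointwise lower bound on the Poisson kernel on $I$, and the only thing to watch is that $|I|=2\eta$ so that the factors $2\eta^2$ and $|I|\eta$ match. The lemma is effectively the statement that the Poisson smoothing at scale $\eta$ cannot lose more than a constant factor when restricted to an interval of length $2\eta$, which is exactly what drives the reduction of delocalization estimates for $Q_I$ to fractional moment estimates on diagonal resolvent entries used in the subsequent sections.
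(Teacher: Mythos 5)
Your proof is correct and is essentially identical to the paper's: both rely on the pointwise bound $\eta^2+(\lambda_j-\lambda)^2\le 2\eta^2=\eta|I|$ for $\lambda_j\in I$, which yields $|\Lambda_I|\,P_I(k)\le 2\eta\,\Im R_{kk}(z)$, and then square and sum over $k$. The only cosmetic difference is that the paper phrases the key step as an upper bound on $\sum_{u\in\Lambda_I}\langle u,e_k\rangle^2$ while you phrase it as a lower bound on $\Im R_{kk}(z)$; these are the same inequality.
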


\begin{proof}
We use the classical bound for $1 \leq k \leq n$,
$$
\sum_{u \in \Lambda_I} \langle u, e_k \rangle ^2 \le  \sum_{j = 1}^n \frac{ 2 \eta^2    \langle u_j, e_k \rangle ^2 }{ \eta^2 + ( \lambda_j - \lambda) ^2  } =  2 \eta \Im R_{kk}(z)
$$
We get,
$$
Q_I  \leq  \frac{ 4 n \eta^2}{|\Lambda_I|^2 }  \sum_{k =1} ^n (\Im R_{kk}(z))^2, 
$$ 
as requested. \end{proof}

Incidentally, we remark from \cite[Lemma 5.9]{BG} and the above proof of Theorem \ref{th:locvect} that there is a  converse lower bound of $Q_I$ involving the average of $(\Im  G_{kk} (z) ) ^{\beta}$ for any $0 < \beta < 1$.

We may now briefly describe the strategy behind Theorem \ref{th:main}. Take $I= [\lambda - \eta , \lambda + \eta] \subset K$ be an interval and $z = \lambda + i \eta$. First, from \cite[Theorem 1.1]{BG}, there exists a constant $c >0$, such that, with overwhelming probability, $|\Lambda_I| \geq c n |I|$.  Thanks to Lemma \ref{le:IPRres}, it is thus sufficient to prove that 
$$
\frac 1 n \sum_{k =1} ^n (\Im R_{kk}(z))^2 = O(1).
$$
From general concentration inequalities, it turns out that the above quantity is self averaging for $\eta \geq n ^{-\rho}$. Using the exchangeability of the coordinates, it remains to prove that
$$
\E (\Im R_{11}(z))^2  = O(1). 
$$
Now, the law of $R_{11}(z)$ converges as $n$ goes to infinity to a limit random variable, say $R_\star(z)$, whose law satisfies a fixed point equation. In subsection \ref{subsec:FPE}, in the spirit of \cite{MR1492789}, we will study this fixed point and prove, by an implicit function theorem,  that $\E (\Im R_\star(z))^2  = O(1)$. It will remain to establish an explicit convergence rate of $R_{11} (z)$ to $R_\star(z)$ to conclude the proof of Theorem \ref{th:main}. A careful choice of the norm for this convergence will be very important. We outline the content of these two sections in the next subsection.

\subsection{The fixed point equations}
\label{subsec:exfpe}
The starting point in our approach is Schur's complement formula, 
\begin{eqnarray}
R_{11} (z)&=&  - \PAR{ z   - n^{-\frac 1 \alpha} X_{11}  +  n^{-\frac 2 \alpha}  \sum_{2 \leq k,\ell \leq n} X_{1k} X_{1\ell} R^ {(1)}_{k\ell}(z)}^{-1} \nonumber\\
& = & - \PAR{ z    + n^{-\frac 2 \alpha}  \sum_{2 \leq k   \leq n} X^2_{1k}  R^ {(1)}_{kk}(z) + T_z }^{-1},  \label{eq:schurcf0}
\end{eqnarray}
where $R^{(1)}$ is the resolvent of the $(n-1)\times (n-1)$ matrix $(X_{k\ell})_{2\le k,\ell\le n}$ and we have set 
\begin{eqnarray*}
T_z & = &  n^{-\frac 1 \alpha}  X_{11} + n^{-\frac 2 \alpha} \sum_{2 \leq k \ne \ell \leq n} X_{1k} X_{1 \ell} R_{k\ell}^{(1)}(z).
\end{eqnarray*}
It turns out that $T_z$ is negligible, at least for $\Im z$ large enough. Assuming that, we observe that the moments of $\Im R_{11}(z)$ are governed by 
the order parameter 
$$y_z =\frac{1}{n}\sum_{k=1}^n (\Im R_{kk}(z))^{\frac{\alpha}{2}}.$$
Indeed, \begin{eqnarray}
\Im R_{11} (z)&\simeq &  -\Im  \PAR{ z    + n^{-\frac 2 \alpha}  \sum_{2 \leq k\le n} X_{1k}^2 R^ {(1)}_{kk}(z)}^{-1} \nonumber\\
& \le & \PAR{  n^{-\frac 2 \alpha}  \sum_{2 \leq k\le n} X_{1k}^2 \Im R^ {(1)}_{kk}(z)}^{-1} . \nonumber
\end{eqnarray}
The resolvent $R$ and $R^{(1)}$ being close, we can justify that $y_z \simeq \frac{1}{n}\sum_{k=2}^n (\Im R_{kk}^{(1)}(z))^{\frac{\alpha}{2}}$. Then, taking moments  and using the formula,  for $p > 0$, $\Re (x) >0$,
\begin{equation}\label{eq:feo}
\frac 1 {x^{p}}=\frac 1 {\Gamma(p)} \int_0^\infty t^{p-1}e^{-x t} dt,
\end{equation}
we deduce that
\begin{eqnarray}
\bE[\left(\Im R_{11}(z)\right)^p]&\le&  \frac{1}{\Gamma(p)}\int_0^\infty t^{p-1} \bE[ e^{- t(\Im R_{11}(z))^{-1}}] dt \nonumber \\
&  \lesssim  & \frac{1}{\Gamma(p)}\int_0^\infty t^{p-1} e^{-\Gamma(1-\alpha/2)
 t^{\alpha/2}  n^{-1} \sum_{k=2}^n (\Im R^{(1)}_{kk}(z))^{\frac{\alpha}{2}}}dt   \nonumber \\
&  \lesssim  & \frac{1}{\Gamma(p)}\int_0^\infty t^{p-1} e^{-\Gamma(1-\alpha/2) 
 t^{\alpha/2} y_z}dt  , \label{bot}\end{eqnarray}
 where, in the second step, we used that the variable $X_{1k}^2$ are in the domain of attraction of the non-negative $\alpha/2$-stable law (this approximation will be made more precise notably thanks to Lemma \ref{cor:formulealice}). The main point becomes to lower bound $y_z$.
 To this end, we shall extend it as a function on $\mathbb C$ and set
 $$
\gamma_z (u) = \Gamma \PAR{1 - \frac \alpha 2} \times \frac 1 n \sum_{k=1}^n \PAR{ -i R_{kk}(z) . u }^{\frac \alpha 2}
$$
where 
$$
h.u = \Re ( u )  h + \Im (u ) \bar h = (  \Re (u) + \Im (u) ) \Re (h) + i ( \Re(u) - \Im(u) ) \Im(h) .
$$
Observe that we wish to lower bound
 $$\gamma_z(e^{i\pi/4})= 2^{\frac \alpha  4}   \Gamma \PAR{1 - \frac \alpha 2} \times \frac 1 {n}\sum_{k=1}^n (\Im(R_{kk}))^{\frac{\alpha}{2}} = 2^{\frac \alpha  4}   \Gamma \PAR{1 - \frac \alpha 2} y_z\,.$$
We shall study the function $\gamma_z$ thanks to a fixed point argument.
We shall use that $\gamma_z$ is homogeneous. Also, we shall restrict ourselves to $u\in\cK_1=\{z\in\mathbb C: \arg(z)\in [-\frac{\pi}{2},\frac{\pi}{2}]\}$, or even in the first quadrant $\cK_1^+=\{z\in\mathbb C: \arg(z)\in [0,\frac{\pi}{2}]\}$.  Here and after, for $z \in \C$, we take the argument $\arg(z)$ in $(-\pi, \pi]$. 
We can see that $\gamma_z$ is approximately solution of a fixed point equation by using \eqref{eq:schurcf0}. To state this result, let us first   define the space $\cH_{\alpha/2,\kappa}$, $\kappa\in [0,1)$, in which we will consider $\gamma_z$.
For any $\beta \in \C$, we let $\cH_\beta$ denote the space of $C^1$ functions $g$ from $ \cK_1^+$ to  $\bC$ such that $g( \lambda u ) = \lambda^{\beta } g(u)$, for all $\lambda \geq 0$. For $\kappa\in [0,1)$, we  endow $\cH_\beta$  with the norms
\begin{eqnarray}
\nonumber \| g  \|_{\infty} & =& \sup_{ u \in S_1^+} | g(u) |,\\
\label{eq:defnorm} \| g  \|_{\kappa} &= & \| g \|_\infty + \sup_{ u \in S_1^+}  \sqrt{ |(i.u)^{\kappa}\partial_1 g (u) |^2 +|(i.u)^{\kappa}\partial_i g (u) |^2} , 
\end{eqnarray}
where $S_1^+=\{u\in  \cK_1^+, |u|=1\}$ and $\partial_\veps g(u) \in \dC$ is the partial derivative of $g$ at $u$ with respect to the real ($\veps = 1$) or imaginary part ($\veps = i$) of $u$. We denote $\cH_{\beta,\kappa}$ the closure of $\cH_\beta$ for $\|.\|_\kappa$. The space $\cH_{\beta,\kappa}$ is a Banach space. Notice also that $\cH_{\beta,0}$ and $\cH_{\beta}$ coincide. The norm $\|.\|_\kappa$ will turn out to be useful with $\kappa>0$ to obtain concentration estimates for $\gamma_z$ as well as to establish existence and good properties for its limit $\gamma^\star_z$ (there $\kappa\ge 0$ is sufficient). 

 We define formally the function  $F$ given  for  $h\in \mathcal K_1$, $ u \in S^1_+$  and $g \in \cH_{\alpha/2}$ by

\begin{align}
 F_h ( g ) (u) =  \int_0 ^ {\frac \pi 2}   \hspace{-5pt} d\theta (\sin 2 \theta  )^{\frac \alpha 2-1}  \int_{0}^\infty  \hspace{-5pt} dy\,  y^{-\frac{\alpha}{2}-1} \int_0^\infty  \hspace{-3pt} dr \, r^{\frac{\alpha}{2}-1} e^{- r h .e^{i \theta }   }\left( e^{-r^{\frac \alpha 2 } g(e^{i\theta})} - e^{- y  r h .  u }e^{-r^{\frac \alpha 2}g(e^{i \theta} + y   u )}\right).\label{defF}
\end{align}
$F_{-iz}$ is  related to a fixed point equation satisfied by $\gamma_z$.  
Namely  let 
\begin{equation}\label{defc}
c_\alpha =   \frac {  \alpha }  { 2^{ \frac \alpha 2 }  \Gamma ( \alpha / 2 ) ^2  } \quad \hbox{ and } \quad \check u = i \bar u =  \Im (u) + i \Re (u). \end{equation}
For $z \in \dC_+$, we introduce the map  $G_z$ on $\cH_{\alpha/2}$ given by
\begin{equation}\label{defG}
G_z(f)  (u)= c_\alpha F_{-iz} (f) (\check u)\,.
\end{equation}
Finally we let 
\begin{align*}
\bar\gamma_z (u) & =\dE \gamma_z(u) = \Gamma \PAR{1 - \frac \alpha 2} \dE \PAR{-i R_{11} (z).u}^{\frac \alpha 2}.
\end{align*}
Then, $ \bar\gamma_z (u) \in \cH_{\alpha/2}$ and we shall prove that

\begin{proposition}\label{prop:afpi} Let $\alpha \in (0,2)$ and $\delta\in (0,\alpha/2)$. There exists $c  > 0$ such that if for  $z \in \dC_+$ and some $\veps >0$, we have $|z| \leq \veps^{-1}$, $\Im z \geq n^{-\delta / \alpha + \veps} $, $\dE ( \Im  R_{11}(z) )^{\alpha/2} \geq \veps $ and $\dE | R_{11} (z)|\leq \veps^{-1}$ then for all $p \geq \alpha/2$ and all $n$ large enough (depending on $\alpha,\veps,p$),
\begin{eqnarray*}
\| \bar \gamma_z - G_z ( \bar \gamma_z) \|_{1-\alpha/2+\delta} & \leq  & (\log n )^{c}   \PAR{    \eta^{-\alpha/2} \bar M_z^{\alpha/2}  +   \eta^{-\alpha/2} n^{-\delta/2}  + \bar M_z^{1 - \alpha/2} \IND_{\alpha >1} } \\
|\dE |R_{11}|^p - r_{p,z} ( \bar \gamma_z) | & \leq & (\log n)^c \PAR{ \eta^{-p} \bar M_z^{\alpha/2} +  \eta^{-\alpha/2}n^{-\delta/2}  } \\
|\dE (-i R_{11})^p - s_{p,z}( \bar \gamma_z(1)) | & \leq & (\log n)^c \PAR{ \eta^{-p} \bar M_z^{\alpha/2} +  \eta^{-\alpha/2}n^{-\delta/2}  },
\end{eqnarray*}
where we have set $\eta = \Im z$, $\bar M_z  = \bE  \Im R^{(1)}_{22}(z) / (n \Im z)$ and,  for $f \in \cH_{\alpha/2}$
$$
r_{p,z} (f) =   \frac{2^{1-p/2}}{\Gamma( p/2)^2}  \int_0 ^ {\frac \pi  2} d\theta  \sin ( 2 \theta) ^{p/2 -1} \int_0 ^ \infty dr r^{p-1}  e^ { ir z. e^{ i \theta}  - r^{\alpha/2} f( e^{ i \theta})  },
$$
and for $x \in \cK_1$, 
$$
s_{p,z} (x) = \frac{1}{\Gamma( p)}  \int_0 ^ \infty dr r^{p-1}  e^ { - i r z   - r^{\alpha/2} x}\,.
$$

\end{proposition}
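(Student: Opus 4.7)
The plan is to derive the three approximate identities by combining Schur's complement \eqref{eq:schurcf0} with a quantitative stable approximation for the weighted sum
\[
S_z \;:=\; n^{-2/\alpha}\sum_{k=2}^n X^2_{1k}\,R^{(1)}_{kk}(z),
\]
which governs $-iR_{11}(z)$.

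First I would write $R_{11}(z)=-(z+S_z+T_z)^{-1}$ and dispose of the cross-term $T_z$. A second-moment computation on $T_z$, using the Ward identity $\sum_{k\neq\ell}|R^{(1)}_{k\ell}|^2\le (n\Im z)^{-1}\sum_k\Im R^{(1)}_{kk}$, yields $\dE|T_z|^2\lesssim \bar M_z$; combined with the hypotheses $|z|\le\veps^{-1}$, $\Im z\ge n^{-\delta/\alpha+\veps}$ and the bounds $\dE(\Im R_{11})^{\alpha/2}\ge \veps$, $\dE|R_{11}|\le\veps^{-1}$ (which enter through the Laplace representation \eqref{eq:feo} once one takes fractional moments), this produces polynomial errors of the form $\eta^{-\alpha/2}\bar M_z^{\alpha/2}+\eta^{-\alpha/2}n^{-\delta/2}$. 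The extra $\bar M_z^{1-\alpha/2}\IND_{\alpha>1}$ correction arises because for $\alpha>1$ the truncated $X^2_{1k}$ has non-negligible conditional mean contributing a drift to $T_z$ at first order, while below $\alpha=1$ the tail fluctuations already dominate. Replacing $R^{(1)}_{kk}$ by $R_{kk}$ inside $\gamma_z$ is absorbed into the same bound by the resolvent identity.

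The heart of the proof is the quantitative stable approximation. Conditionally on the bottom-right $(n-1)\times(n-1)$ minor, the $X^2_{1k}$ are i.i.d.\ non-negative with tail $\dP(X^2_{1k}\ge t)\sim t^{-\alpha/2}/n$, hence in the domain of attraction of the one-sided $\alpha/2$-stable law. Truncating $X^2_{1k}$ at scale $n^{2/\alpha}$ and expanding the conditional Laplace function against the tail (this is the content of Lemma \ref{cor:formulealice}) yields, uniformly in $h\in\cK_1$,
\begin{equation*}
\dE\bigl[\,e^{-r\,h.S_z}\,\bigm|\,R^{(1)}\bigr] \;=\; \exp\!\Bigl(-\Gamma(1-\tfrac{\alpha}{2})\,r^{\alpha/2}\,n^{-1}\!\sum_{k\ge 2}(-iR^{(1)}_{kk}(z).h)^{\alpha/2}\Bigr) \;+\; O\bigl((\log n)^c n^{-\delta/2}\bigr).
\end{equation*}
Applying the integral identity $Y^{\alpha/2}=\frac{\alpha/2}{\Gamma(1-\alpha/2)}\int_0^\infty(1-e^{-rY})r^{-\alpha/2-1}dr$ to $Y=(-iR_{11}(z)).u$ (whose real part is positive on $u\in S_1^+$), inserting the Schur identity $-iR_{11}=i(z+S_z+T_z)^{-1}$, and decomposing the complex direction $h\in\cK_1$ in polar form $h=re^{i\theta}$ produces the outer angular integral with weight $(\sin 2\theta)^{\alpha/2-1}$ inherited from the stable structure. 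The inner $y$-integral with density $y^{-\alpha/2-1}\,dy$ is the L\'evy--Khintchine representation of the one-sided $\alpha/2$-stable law: the shift $e^{i\theta}\mapsto e^{i\theta}+yu$ plays the role of a jump in the direction $u$, and the factor $e^{-yrh.u}$ its drift compensator. Exchangeability over rows replaces the empirical average by $\bar\gamma_z$ and recovers exactly $G_z(\bar\gamma_z)=c_\alpha F_{-iz}(\bar\gamma_z)(\check{\cdot})$. The two moment identities follow by the same template: for $\dE(-iR_{11})^p$, use $w^{-p}=\Gamma(p)^{-1}\int_0^\infty r^{p-1}e^{-rw}dr$ after rotating the contour to $r=-is$ so that $\Re(sw)>0$ when $\Im w>0$, and substitute the stable Laplace approximation to obtain $s_{p,z}(\bar\gamma_z(1))$; for $\dE|R_{11}|^p=\dE(R_{11}\bar R_{11})^{p/2}$, the additional polar decomposition yields the angular $\theta$-integral with weight $(\sin 2\theta)^{p/2-1}$ entering $r_{p,z}(\bar\gamma_z)$.

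The main obstacle is the derivative-sensitive norm $\|\cdot\|_{1-\alpha/2+\delta}$ in the first estimate: differentiating the integral representation of $\bar\gamma_z(u)$ in $\Re u$ or $\Im u$ brings down a factor $Y^{\alpha/2-1}$ whose singularity near the boundary of $S_1^+$ behaves like $(i.u)^{\alpha/2-1}$, and the weight $(i.u)^{1-\alpha/2+\delta}$ absorbs this singularity precisely, with the slack $\delta>0$ needed for integrability against the Laplace variable $r$. Obtaining this uniform derivative bound \emph{simultaneously} with the polynomial rate from the stable approximation, at an exponent tuned to the contraction property of $G_z$ needed for the implicit function argument of the next section, is the delicate coupling that pins down this particular choice of norm.
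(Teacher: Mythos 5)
Your high-level picture — Schur's complement, a stable Laplace approximation of the conditional law of $n^{-2/\alpha}\sum_k X_{1k}^2 R^{(1)}_{kk}$ via Lemma~\ref{cor:formulealice}, and an exchange of rows to recover $G_z(\bar\gamma_z)$ — does track the paper. But there is a concrete error at the first step that breaks the whole argument, and a second substantive misattribution.

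The claim $\dE|T_z|^2\lesssim\bar M_z$ is false whenever $0<\alpha<2$. Both terms of $T_z = n^{-1/\alpha}X_{11}+n^{-2/\alpha}\sum_{k\ne\ell}X_{1k}X_{1\ell}R^{(1)}_{k\ell}$ involve products of $\alpha$-stable variables with $\alpha<2$, so $T_z$ has \emph{infinite} second moment no matter what Ward identity is used on the resolvent factor; conditioning on $R^{(1)}$ does not help because $\dE[|X_{1k}X_{1\ell}|^2]=\infty$. The paper never takes a second moment of $T_z$. It instead relies on the tail bound of Lemma~\ref{le:offdiag}, $\dP_{\cF}(|T_z|\ge t)\lesssim M_z^{\alpha/2}t^{-\alpha}\log n$, introduces the truncation event $E=\{|T_z|\le(\log n)^{-c_0}\}$, and only then evaluates truncated fractional moments $\dE_\cF|T_z|^\beta\IND_E$ of order $\beta>\alpha$ (inequality~\eqref{eq:Ttail2}). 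Replacing your second-moment step by this tail/truncation mechanism is not a cosmetic change: it is what produces the $\bar M_z^{\alpha/2}$ scaling and the $(\log n)^c$ prefactor, and it requires first establishing that $\Re(Q_z)\ge(\log n)^{-c_0}$ with overwhelming probability so the truncated difference quotient of $x\mapsto(x^{-1}.u)^{\alpha/2}$ is under control — this whole Step~2 of the paper is absent from your outline.

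Your explanation of the term $\bar M_z^{1-\alpha/2}\IND_{\alpha>1}$ is also not the paper's. It does not come from a drift of the truncated $X^2_{1k}$; the $X_{1k}$ are symmetric, and the stable approximation of $Q_z$ is the same for all $\alpha$. The extra term appears in Step~3 when bounding $\dE S^{\alpha-1}\IND_{S\le s}|T_z|\IND_E$ (with $S$ the coupled positive $\alpha/2$-stable variable): for $\alpha>1$ the natural H\"older split has exponent $q=\alpha/(2-\alpha)>\alpha$, which forces the truncation $S\le s=M_z^{-1}$ on the $S$-factor, and $(\dE|T_z|^q\IND_E)^{1/q}$ then yields $M_z^{\alpha/(2q)}=M_z^{1-\alpha/2}$; for $\alpha\le 1$ one takes $s=\infty$ and the term vanishes. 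A final gap: the stated $O((\log n)^c n^{-\delta/2})$ error in your displayed Laplace approximation is asserted, not derived. In the paper this rate is not a consequence of the stable approximation itself (Corollary~\ref{cor:stg} is exact) but of the concentration of the Gaussian-randomized empirical sum $Z$ about $\bar\gamma_z$ in the weighted $C^1$ norm (Lemmas~\ref{le:concnorm} and~\ref{le:concres2}) plus the Lipschitz property of $G_z$ (Lemma~\ref{le:fplip}); none of these ingredients is present in your plan.
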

Using that $R^{(1)}$ is close to $R$, we can upper bound $\bar M_z$ if we can lower bound $\bar \gamma_z$ by \eqref{bot}. Similarly, we can upper bound $\E|R_{11}|^p$ if we can lower bound $\bar \gamma_z$ by \eqref{bot}.
Assuming for a moment that we can obtain such  bounds (by using a bootstrap argument)
the above proposition shows that $\bar \gamma_z$ is approximately a fixed point for $G_z$. 

It turns out that, for any $z \in \C_+$, $\bar \gamma_z$ converges to $\gamma^\star_z \in \cH_{\alpha/2}$ as $n$ goes to infinity, where $\gamma^\star_z$ is a solution of the equation $0 = f - G_z (f)$ (even we cannot prove that there is a unique solution of this equation).  We will check that this last equation has a unique explicit solution of interest for $z = it$, $t \geq 0$ with $\gamma_0^\star$ also  in $  \cH_{\alpha/2}$. We will study  the solutions of the equation $0 = f - G_z(f)$  for $z$ close to $0$ and $f$ close to $\gamma_0^\star$ thanks to the Implicit Function Theorem on Banach space. We  will show that for most $\alpha$ in $(0,2)$, if $|z| $ is small enough, $0 = f -G_z(f)$ has a unique solution in the neighborhood of $\gamma_0^\star$. Moreover, the real part of this solution  is lower bounded by a positive constant. 
Let us summarize these results in the following statement:
\begin{proposition}\label{fixlem}  \label{le:4IFTapp} There exists a countable subset $\cA \subset (0,2)$ with no accumulation point on $(0,2)$ such that the following holds. Let $\kappa \in [0,1)$ and $\alpha\in (0,2)\backslash \mathcal A$. There exists $\tau >0$ such that if $|z| \leq \tau$, then $\gamma^\star_z$ is the unique $f\in  \cH_{\alpha/2,\kappa}$ 
such that $f =G_z(f) $ and $\|f - \gamma_0^\star \|_\kappa \leq \tau$. Moreover, uniformly in $|z| \leq \tau$,  $\gamma_z^\star(e^{i\pi/4})$ is bounded from below and,  for any $p >0$, $r_{p,z} (  \gamma^\star_z) $ is bounded from above. 
\end{proposition}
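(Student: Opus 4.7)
The plan is to apply the Implicit Function Theorem on the Banach space $\cH_{\alpha/2,\kappa}$ to the map $\Phi(f,z) := f - G_z(f)$ at the base point $(\gamma^\star_0,0)$. First I would identify $\gamma^\star_0$ by direct calculation. At $z=0$, the factor $e^{-rh\cdot e^{i\theta}}$ in \eqref{defF} equals $1$, and the inner $r$-integral collapses via the substitution $s=r^{\alpha/2}$ to give
$$
G_0(g)(u) = c'_\alpha \int_0^{\pi/2}(\sin 2\theta)^{\alpha/2-1}d\theta \int_0^\infty y^{-\alpha/2-1}\bigl(g(e^{i\theta})^{-1} - g(e^{i\theta} + y\check u)^{-1}\bigr)dy.
$$
Using $\alpha/2$-homogeneity and the invariance $u\mapsto\check u$, the search for a real-positive self-similar solution reduces to a one-dimensional scalar fixed-point equation with a unique explicit positive root, producing $\gamma^\star_0\in\cH_{\alpha/2,\kappa}$ whose value at $e^{i\pi/4}$ is a computable positive constant (matching the expression in \cite{BouchaudCizeau} for the spectral density at zero energy).

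Next I would verify that $(f,z)\mapsto G_z(f)$ is jointly continuous and $C^1$ in $f$ from a neighborhood of $(\gamma^\star_0,0)$ into $\cH_{\alpha/2,\kappa}$. Differentiating \eqref{defF}--\eqref{defG} under the integral sign yields the Fr\'echet derivative $T_z := D_f G_z(\gamma^\star_z)$ as an integral operator on $\cH_{\alpha/2,\kappa}$ whose kernel depends continuously on $z$ and real-analytically on $\alpha\in(0,2)$. The weight $(i.u)^\kappa$ in the norm \eqref{eq:defnorm} is inserted precisely to absorb the mild singularities produced by differentiating in $u$ through the integral, ensuring $T_z$ is bounded. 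The decisive hypothesis for the IFT is the invertibility of $I-T_0$.

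The main obstacle is the spectral analysis of $T_0$. Because $\alpha/2$-homogeneous functions are determined by their restriction to the arc $S^1_+$, $T_0$ acts on a H\"older-type function space on this compact arc and standard kernel estimates show it is a compact operator. Its spectrum is therefore a discrete subset of $\C$ with $0$ as the only possible accumulation point. Since the kernel depends real-analytically on $\alpha$, Kato's perturbation theory implies that each eigenvalue $\lambda_j(\alpha)$ is locally real-analytic in $\alpha$, and hence either $\lambda_j\equiv 1$ on $(0,2)$ or the level set $\lambda_j^{-1}(1)$ is discrete. Consequently the exceptional set $\cA := \{\alpha\in(0,2): 1\in\operatorname{Spec}(T_0)\}$ is either all of $(0,2)$ or a countable subset without accumulation in $(0,2)$. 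I would exclude the first alternative by exhibiting one value of $\alpha\in(0,2)$ (accessible to direct estimate, for instance in a limiting regime) at which $\|T_0\|<1$, e.g. by dominating the kernel by a contractive reference operator.

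Granted the invertibility of $I-T_0$ for $\alpha\notin\cA$, the IFT produces a unique $\gamma^\star_z$ solving $f=G_z(f)$ in a uniform $\tau$-ball around $\gamma^\star_0$, depending continuously on $z$. The lower bound on $\gamma^\star_z(e^{i\pi/4})$ then follows from the strict positivity of $\gamma^\star_0(e^{i\pi/4})$ combined with $\|\gamma^\star_z-\gamma^\star_0\|_\kappa\to 0$ as $z\to 0$. For the upper bound on $r_{p,z}(\gamma^\star_z)$, observe that for $z\in\C_+$ one computes $\Re(irz.e^{i\theta}) = -r\,\Im(z)(\cos\theta+\sin\theta)\leq 0$, so combined with $\Re\gamma^\star_z(e^{i\theta})\geq c>0$ the integrand in $r_{p,z}$ is bounded by $r^{p-1}e^{-cr^{\alpha/2}}$; the $r$-integral converges uniformly in $(z,\theta)$, and the outer $\theta$-integral against $(\sin 2\theta)^{p/2-1}$ is finite since $p>0$.
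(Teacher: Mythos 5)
Your high-level strategy matches the paper: compute $\gamma_0^\star$ explicitly, check $C^1$ regularity of $(z,f)\mapsto G_z(f)$, establish compactness of $T_0 = D_fG_0(\gamma_0^\star)$ (which is $-K_\alpha$ in the paper's notation), and then run the Implicit Function Theorem; the lower bound on $\gamma^\star_z(e^{i\pi/4})$ and the upper bound on $r_{p,z}(\gamma^\star_z)$ are obtained as you describe. Your explicit formula for $G_0$ after the substitution $s=r^{\alpha/2}$ is correct.

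The genuinely different part is how you establish that the exceptional set $\cA$ is discrete, and here your argument is both a different route from the paper and incomplete as stated. The paper constructs an affiliated operator $H_\alpha$ on a Hilbert space $L^2_\kappa(X)$ whose spectrum contains that of $K_\alpha$, shows $H_\alpha^m$ is trace class (which forces the exclusion of $\{1/2,1\}$), proves $\alpha\mapsto\det(I-H_\alpha^m)$ is analytic in a complex strip, and shows $\|H_{\beta+it}^m\|\to 0$ as $t\to\infty$ via the exponential decay of $\Gamma(\alpha/2)^{-1}$. Your proposal instead works with Kato's analytic perturbation theory directly on the Banach space $\cH_{\alpha/2,\kappa}$. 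This is potentially simpler (no detour through $L^2_\kappa(X)$, no trace-class bookkeeping, no need to treat $\{1/2,1\}$ separately), but two things need fixing. First, compact-operator eigenvalues are not "locally real-analytic in $\alpha$": eigenvalue groups split into Puiseux branches, and one cannot label them globally as functions $\lambda_j(\alpha)$ on $(0,2)$. Your dichotomy "either $\lambda_j\equiv 1$ or $\lambda_j^{-1}(1)$ is discrete" is therefore not immediate; the correct statement is a connectedness argument: the set of accumulation points of $\cA$ is both open (any accumulation point has a whole neighborhood in $\cA$, by the identity theorem applied to the Puiseux branch hitting $1$ along a real sequence) and closed in $(0,2)$, hence is $\emptyset$ or $(0,2)$. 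Second, and more seriously, your step to rule out $\cA=(0,2)$ — exhibiting one real $\alpha\in(0,2)$ with $\|T_0\|<1$ by "a direct estimate in a limiting regime" — is a gap. The constant $c_\alpha$ degenerates at both ends $\alpha\to 0^+$ and $\alpha\to 2^-$, as does $a_0$, and there is no reason to expect $\|T_0\|<1$ anywhere on the real segment; the paper's argument deliberately sends $\Im(\alpha)\to\infty$, which makes $|c_\alpha|\to 0$ thanks to the super-exponential decay of $|\Gamma(\alpha/2)|^{-1}$ in the imaginary direction — a mechanism unavailable over real $\alpha$. To salvage your route, you could instead extend $T_0$ holomorphically to a complex neighborhood of $(0,2)$ and verify that the spectral radius of $T_0$ vanishes as $\Im(\alpha)\to\infty$ (same computation as the paper's Lemma on $\|H_{\beta+it}^m\|$); the identity theorem for the Puiseux branches then gives discreteness without any Fredholm determinant. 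As written, though, the real-axis norm estimate is not substantiated and is the crux of the proof.
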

The possible existence of the set $\mathcal A$  should be purely technical. Our proof requires 
 that $\mathcal A$ contains $\{1/2,1\}$, but it could be larger as our argument is based on the fact that some function, analytic in $\alpha$, does not vanish except possible on a set with no accumulation points.

We will also deduce the following result. 

\begin{proposition}\label{le:fploccont}
Let $\cA$ be as in Proposition \ref{fixlem}, $\alpha \in (0,2)\backslash \cA$, $\kappa \in [0,1)$ and $\gamma^\star_z$ be as in Proposition \ref{fixlem} for $|z|$ small enough. There exist $\tau >0$ and $c > 0$ such that if $|z| \leq \tau$ and $\| \gamma - \gamma_z^\star \|_\kappa \leq \tau$ then 
$$
\| \gamma - \gamma_z^\star \|_\kappa \leq c \, \| \gamma - G_z (\gamma )  \|_\kappa. 
$$
\end{proposition}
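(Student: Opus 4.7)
Since $\gamma_z^\star$ is a fixed point of $G_z$, the proposition is a quantitative local inversion statement for the map $\Phi_z(\gamma) := \gamma - G_z(\gamma)$ on the closed ball $\{\|\gamma - \gamma_z^\star\|_\kappa \leq \tau\}$ of the Banach space $\cH_{\alpha/2,\kappa}$. The proof is the standard Newton/Taylor scheme on a Banach space: differentiate $\Phi_z$ at $\gamma_z^\star$, use the (uniformly bounded) inverse of the linearization supplied by Proposition~\ref{fixlem}, then absorb the quadratic remainder by taking $\tau$ small.

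\textbf{Step 1: $C^2$ regularity of $G_z$ near $\gamma_z^\star$.} In the formula \eqref{defF} for $F_h(g)$, the dependence on $g$ occurs only through the factors $e^{-r^{\alpha/2}g(\,\cdot\,)}$. By Proposition~\ref{fixlem}, $\Re \gamma_z^\star$ is bounded below by a positive constant on $S_1^+$, so for $\tau$ small enough every $\gamma$ with $\|\gamma - \gamma_z^\star\|_\kappa \leq \tau$ still satisfies $\Re \gamma \geq c_0 > 0$ on $S_1^+$. This uniform positivity supplies exponential decay in $r$, legitimizing differentiation under the integral and controlling the first and second Fréchet derivatives of $G_z$ in the $\|\cdot\|_\kappa$ norm. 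One obtains that $G_z$ is $C^2$ on a neighborhood of $\gamma_z^\star$, with operator-norm bounds on $DG_z$ and $D^2G_z$ that are uniform for $|z|\le \tau$ and depend continuously on $z$.

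\textbf{Step 2: Invertibility of $\mathrm{Id} - DG_z(\gamma_z^\star)$.} The implicit function theorem used to prove Proposition~\ref{fixlem} (and which is exactly the place where the exceptional set $\cA$ originates — a function analytic in $\alpha$ is required not to vanish) yields that $\mathrm{Id} - DG_0(\gamma_0^\star)$ is a Banach-space isomorphism of $\cH_{\alpha/2,\kappa}$ for $\alpha \in (0,2)\setminus\cA$. By the continuous dependence of $DG_z(\gamma_z^\star)$ on $z$ established in Step~1, a standard Neumann series perturbation gives $\tau>0$ and $M<\infty$ such that for every $|z|\leq \tau$,
$$
\bigl\|(\mathrm{Id} - DG_z(\gamma_z^\star))^{-1}\bigr\|_{\mathcal L(\cH_{\alpha/2,\kappa})} \leq M.
$$

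\textbf{Step 3: Taylor expansion and absorption.} The $C^2$ bound from Step~1 gives
$$
\Phi_z(\gamma) = (\mathrm{Id} - DG_z(\gamma_z^\star))(\gamma - \gamma_z^\star) + \mathcal R_z(\gamma), \qquad \|\mathcal R_z(\gamma)\|_\kappa \leq C_1 \|\gamma - \gamma_z^\star\|_\kappa^2,
$$
using $\Phi_z(\gamma_z^\star) = 0$. Applying the inverse from Step~2 gives
$$
\|\gamma - \gamma_z^\star\|_\kappa \leq M\,\|\gamma - G_z(\gamma)\|_\kappa + M C_1\, \tau\,\|\gamma - \gamma_z^\star\|_\kappa.
$$
After shrinking $\tau$ so that $MC_1\tau \leq 1/2$, the last term is absorbed into the left-hand side, yielding the claim with $c = 2M$.

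\textbf{Main obstacle.} The only delicate analytic point is Step~1: one must verify that the $\|\cdot\|_\kappa$ norm, which contains a derivative weighted by $(i.u)^\kappa$ (see \eqref{eq:defnorm}), is respected by the first and second linearizations of $G_z$. Concretely, the kernel estimates used to make sense of $F_h(g)$ and to run the implicit function theorem in Proposition~\ref{fixlem} must extend, without loss in the exponent $\kappa$, to the differentiated integrands, where an extra power of $r^{\alpha/2}$ appears. The lower bound $\Re\gamma \geq c_0 > 0$ makes the exponential $e^{-c_0 r^{\alpha/2}}$ integrable against any power of $r$, so these bounds go through; once they are in hand the rest of the argument is the routine Banach-space local inversion described above.
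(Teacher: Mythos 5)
Your proof is correct and, at heart, rests on the same two ingredients the paper uses: the invertibility of $I-DG_0(\gamma_0^\star)$ provided by Theorem~\ref{th:4IFT}, and the differentiability/continuity of $DG_z$ provided by Lemma~\ref{le:Frechetfp}. The route you take is, however, slightly different from the paper's. The paper applies the inverse function theorem to the map $T(z,f)=(z,\,f-G_z(f))$ on $\dC\times\cH^0_{\alpha/2,\kappa}$ and then deduces the Lipschitz estimate by running the fundamental theorem of calculus on $t\mapsto T^{-1}(z,t(\gamma-G_z(\gamma)))$, with constant $c=\sup\|(DT)^{-1}\|$; you instead unpack the Newton scheme explicitly (Taylor around $\gamma_z^\star$, invert the linearization, absorb the remainder). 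These are really the same argument, with yours making the mechanism visible rather than invoking the IFT as a black box.

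Two small imprecisions worth noting. First, in Step~2 you attribute the isomorphism of $\mathrm{Id}-DG_0(\gamma_0^\star)$ to ``the implicit function theorem used to prove Proposition~\ref{fixlem}''; in fact, this isomorphism is a \emph{hypothesis} of the IFT, and it is Theorem~\ref{th:4IFT} that supplies it (it defines $\cA$ precisely as the exceptional set where $I+K_\alpha = I - DG_0(\gamma_0^\star)$ fails to be an isomorphism). Second, you invoke $C^2$ regularity of $G_z$, which the paper does not prove; what the paper actually establishes in the proof of Lemma~\ref{le:Frechetfp}, namely the Lipschitz estimate~\eqref{eq:contDF} on $(h,g)\mapsto DF_h(g)$ over $\cH^a_{\alpha/2,\kappa}$, is exactly what you need for the quadratic remainder bound $\|\mathcal R_z(\gamma)\|_\kappa\le C_1\|\gamma-\gamma_z^\star\|_\kappa^2$, so your Step~3 goes through; you should cite that Lipschitz bound rather than an unproved $C^2$ statement. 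With those two citations corrected, your argument is a clean and slightly more explicit version of the paper's.
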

As a corollary of the above three propositions, we will prove that $\gamma_z(e^{i\pi/4})$
 is lower bounded for $n$ sufficiently large. In the next section, we study the fixed point equation for $G_z$ and establish Proposition \ref{fixlem}  and Proposition \ref{le:fploccont}. In Section \ref{sec:rates}, we will prove 
Proposition \ref{prop:afpi}  and complete the details of the proof of Theorem \ref{th:main}. 

\section{Analysis of the limiting fixed point equation}
\label{sec:FPI}
\subsection{Analysis of the function $F_h$}
\label{subsec:FPE}
In this first part, we show that the function 
$F_{h} $
is well defined as a   map  from $\cH_{\alpha/2,\kappa}^0=\cup_{\varepsilon>0}\cH^\varepsilon_{\alpha/2,\kappa}$ into $\cH_{\alpha/2,\kappa}$ with
$\cH^\varepsilon_{\alpha/2,\kappa}$ the set of functions in $f \in \cH_{\alpha/2,\kappa}$ such that $\Re f(u) > \veps$ for all  $u$ in $S_1^+$. We also check that $F_h$ has good regularity properties. Notice that $\cH^0_{\alpha/2,\kappa}$ is an open subset of $\cH_{\alpha/2,\kappa}$ for our $\| \cdot \|_\kappa$ norm. We set $\cH_{\alpha/2} ^\veps = \cH_{\alpha/2,0} ^\veps$. Finally, the closure $\bar \cH_{\alpha/2,\kappa}^0$ of $\cH_{\alpha/2,\kappa}^0$ is the set of functions in $\cH_{\alpha/2,\kappa}$ whose real part is non-negative on $\cK_1^+$.

\begin{lem}\label{le:deffp}
Let $h \in  \cK_1$ and $ 0 < \alpha  < 2$.  Let $\kappa\in [0,1)$. $F_h$ defines a map from $\cH^0_{\alpha/2, \kappa}$ to $\cH_{\alpha/2,\kappa}$.  Moreover, if $\Re(h) >0$,  $F_h$ defines a map from $\bar \cH^0_{\alpha/2, \kappa}$ to $\bar \cH^0_{\alpha/2,\kappa}$ and, for some constant $c = c(\alpha)>0$, \begin{equation}\label{le:deffp2}
\| F_h (g)\|_\kappa \leq \frac{c}{\Re (h)^{\alpha/2}} + \frac{c}{\Re (h)^{\alpha}} \| g \|_\kappa \,.\end{equation}
Finally, if $g \in \bar \cH^0_{\alpha/2,\kappa}$,  $| r_{p,ih} (g) | \leq c / \Re(h)^{p}$ and $| s_{p,ih} (g(1)) | \leq c / \Re(h)^{p}$ for some constant $c = c(\alpha,p)$. 
\end{lem}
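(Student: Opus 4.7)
The plan is to break the argument into three pieces: (a) absolute convergence of the defining integrals, with $C^1$-regularity ensuring $F_h(g) \in \cH_{\alpha/2}$; (b) the quantitative bound \eqref{le:deffp2} on the $\kappa$-norm; and (c) positivity preservation together with the elementary bounds on $r_{p,ih}$ and $s_{p,ih}$.

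For (a), I would check absolute convergence boundary by boundary. The $\theta$-factor $(\sin 2\theta)^{\alpha/2-1}$ is integrable on $[0,\pi/2]$ since $\alpha/2>0$. In the innermost $r$-integral, a Taylor expansion of the bracket at $r=0$ shows it vanishes to order $O(yr)+O(r^{\alpha/2})$, cancelling the $r^{\alpha/2-1}$ singularity; at $r=\infty$, the factor $e^{-rh.e^{i\theta}}$ gives exponential decay when $\Re h>0$ (using $\Re(h.e^{i\theta})=(\cos\theta+\sin\theta)\Re h$), while in the case $\Re h=0$ the assumption $g\in\cH_{\alpha/2,\kappa}^0$ provides subexponential decay through $e^{-r^{\alpha/2}g}$. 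In the outer $y$-integral, a first-order Taylor expansion in $y$, using the $C^1$-regularity of $g$, yields a cancellation of order $y$ at $y=0^+$, hence an $O(y^{-\alpha/2})$ integrand, integrable since $\alpha/2<1$; at $y=\infty$, the second exponential decays when $\Re h>0$ while the first is constant in $y$, leaving an $O(y^{-\alpha/2-1})$ integrand. Homogeneity $F_h(g)(\lambda u)=\lambda^{\alpha/2}F_h(g)(u)$ is immediate from the substitution $y'=\lambda y$, and $C^1$-regularity of $F_h(g)$ follows by differentiating under the integral (the extra factor of $y$ from the chain rule only improves integrability at the origin).

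For (b), I would split the $y$-integral at $y=1$. On $[1,\infty)$ the crude bound $|[\cdots]|\le 2$, together with $|e^{-rh.e^{i\theta}}|\le e^{-r\Re h}$, reduces the inner $r$-integral to a Gamma integral giving the $c/\Re(h)^{\alpha/2}$ contribution. On $[0,1]$ I would apply the Lipschitz estimate $|e^{-A}-e^{-B}|\le |A-B|$ (valid on the convex closed right half-plane, which contains the ranges of $A,B$) with $A=yrh.u+r^{\alpha/2}g(e^{i\theta}+yu)$ and $B=r^{\alpha/2}g(e^{i\theta})$. The piece $yr|h|$ of $|A-B|$, after $r$- and $y$-integration, again produces a $\Re(h)^{-\alpha/2}$ factor; the piece $r^{\alpha/2}|g(e^{i\theta}+yu)-g(e^{i\theta})|$ is controlled, by the fundamental theorem of calculus along the segment from $e^{i\theta}$ to $e^{i\theta}+yu$ together with the homogeneity of $g$, by $y\|g\|_\kappa$ times an integrable angular weight. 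This yields the $c\|g\|_\kappa/\Re(h)^{\alpha}$ term. The $\kappa$-part of $\|F_h(g)\|_\kappa$ is handled by differentiating under the integral and repeating the split; multiplication by $(i.u)^\kappa$ with $|i.u|\le 1$ is harmless.

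For (c), the bounds $|r_{p,ih}(g)|,|s_{p,ih}(g(1))|\le c/\Re(h)^p$ follow directly from the integral representations once one observes that the relevant exponentials involving $z=ih$ are dominated (with the sign convention making the integrals convergent) by $e^{-r\Re h}$, reducing to Gamma integrals. For the positivity preservation $F_h(\bar\cH_{\alpha/2,\kappa}^0)\subset\bar\cH_{\alpha/2,\kappa}^0$, I plan to use the identity $\int_0^\infty y^{-\alpha/2-1}(1-e^{-ay})\,dy=-\Gamma(-\alpha/2)\,a^{\alpha/2}$ (valid for $\Re a>0$) to perform the $y$-integration explicitly; the resulting expression becomes a stable-type Laplace integral in $r,\theta$ whose real part can be seen to be non-negative when $\Re h>0$ and $\Re g\ge 0$. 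Alternatively, one verifies positivity by analytic continuation from $h,g$ real and positive, where the integrand is manifestly a difference of real non-negative quantities against a positive kernel. The main obstacle I anticipate is the bookkeeping in (b): the angular $(i.v)^{-\kappa}$ singularity in the bound on $\partial_\veps g$ (at $v=e^{i\pi/4}$) along segments joining $e^{i\theta}$ to $e^{i\theta}+yu$ must be integrated without inflating the dependence on $\Re h$ beyond the claimed powers.
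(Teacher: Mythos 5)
Your plan has the right overall structure (split the $y$-integral, Lipschitz bound on the exponentials, treat $\|\cdot\|_\infty$ and the $\kappa$-weighted derivative separately), which is also what the paper does, but two essential pieces are missing and one step as written does not give the claimed constants.

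First, the obstacle you flag at the end — integrating the $(i.v)^{-\kappa}$ singularity arising from $\partial_\veps g$ along segments $e^{i\theta}+tyu$ — is not a bookkeeping nuisance but the technical heart of the lemma, and it is left entirely unresolved in your proposal. The paper isolates it as a standalone estimate: it shows that
$J_{B,\alpha,\kappa}(u) = |i.u|^\kappa\int_0^{\pi/2}(\sin 2\theta)^{\alpha/2-1}\int_0^\infty y^{-\alpha/2}\,|e^{i\theta}+yu|^{-B}\,|i.(e^{i\theta}+yu)|^{-\kappa}\,dy\,d\theta$
is uniformly bounded in $u\in S_1^+$ whenever $B+\kappa>1-\alpha/2$. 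The proof of this requires a non-obvious decomposition of the $\theta$-integral ($\theta$ near vs.\ away from $\pi/4$), a rescaling $x=|i.u|y$, and a separate uniform bound on $\psi(x)=\int |i.e^{i\theta}+x|^{-\kappa}d\theta$; none of this is captured by a ``first-order Taylor expansion in $y$'' or a crude Lipschitz bound, and without it the statement $F_h(g)\in\cH_{\alpha/2,\kappa}$ (i.e.\ finiteness of $\sup_u|(i.u)^\kappa\partial_\veps F_h(g)(u)|$) is simply not established.

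Second, your direct approach to \eqref{le:deffp2} does not produce the stated powers of $\Re(h)$. The term $yrh.u$ in $|A-B|$ has modulus of order $y r|h|$, not $y r\Re(h)$, and $\partial_\veps(h.u)\in\{h,\bar h\}$ also has modulus $|h|$; plugging these into $r$- and $y$-integrals gives a contribution $\sim |h|\,\Re(h)^{-\alpha/2-1}$, not $\Re(h)^{-\alpha/2}$ as you claim. Since $h\in\cK_1$ can have $|h|\gg\Re(h)$, this is a genuine loss. The paper avoids it by a scaling device you omit: from the homogeneity $F_h(t^{\alpha/2}g)=t^{-\alpha/2}F_{h/t}(g)$ one first derives the crude estimate $\|F_h(g)-F_h(0)\|_\kappa\le C_{\Re(h)}\|g\|_\kappa$, then applies it with $t=\Re(h)$ so that $\Re(h/t)=1$, and separately checks uniform boundedness of $\|F_{h'}(0)\|_\kappa$ over $\Re(h')=1$. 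This rescaling, not a refined direct integral estimate, is what produces the clean $\Re(h)^{-\alpha/2}$ and $\Re(h)^{-\alpha}$ dependence in \eqref{le:deffp2}.

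For part (c), your bounds on $r_{p,ih}$ and $s_{p,ih}$ via Gamma integrals are fine and match the paper's one-line reduction to \eqref{eq:gammaHB}. The positivity preservation $F_h(\bar\cH^0_{\alpha/2,\kappa})\subset\bar\cH^0_{\alpha/2,\kappa}$ is not actually argued in the paper's proof of this lemma (it is implicit from the probabilistic realisation in Corollary \ref{cor:stg}); your first idea, performing the $y$-integration via $\int_0^\infty y^{-\alpha/2-1}(1-e^{-ay})\,dy=-\Gamma(-\alpha/2)a^{\alpha/2}$, does not apply as stated because $g(e^{i\theta}+yu)$ depends on $y$, and the ``analytic continuation in $g$'' alternative is not well defined since $g$ ranges over a function space, not a scalar parameter.
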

We shall also prove that $F_h$ is Fr\'echet differentiable and more precisely

\begin{lem}\label{le:Frechetfp}
Let $h \in  \cK_1$, $ 0 < \alpha  < 2$,   $\kappa\in [ 0,1)$, and $g \in \cH^0 _{\alpha/2,\kappa}$. The Fr\'echet derivative of $F_h$ at $g$ is the bounded operator given, for any $f\in \cH_{\alpha/2,\kappa}$, by
\begin{align*}
& DF_h ( g ) (f) (u) =  \int_0 ^ {\frac \pi 2}   \hspace{-5pt} d\theta (\sin 2 \theta  )^{\frac \alpha 2 -1}\, \int_{0}^\infty  \hspace{-5pt} dy\,  y^{-\frac{\alpha}{2}-1}  \\
&\quad \quad \times \int_0^\infty  \hspace{-3pt} dr \, r^{\alpha-1} e^{- r h .e^{i \theta }   }\left( f(e^{i \theta} ) e^{-r^{\frac \alpha 2 } g(e^{i\theta})} -  f(e^{i \theta}+ yu ) e^{- y  r h .  u }e^{-r^{\frac \alpha 2}g(e^{i \theta} + y   u )}\right).
\end{align*}
Moreover, $(h,g)\mapsto F_h(g)$ is continuously differentiable on $\cK_1 \times \cH^0_{\alpha/2,\kappa}$ and $(h,g) \mapsto DF_h(g)$ is continuous in $\cK_1 \times \cH^0_{\alpha/2,\kappa} \to \mathcal B ( \cH_{\alpha/2,\kappa}, \cH_{\alpha/2,\kappa})$. \end{lem}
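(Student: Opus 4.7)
The strategy is to verify that the explicit formula proposed for $DF_h(g)$ is the Fr\'echet derivative of $F_h$ at $g$ by formal differentiation under the triple integral, followed by a Taylor-remainder bound for the Fr\'echet property, and then to derive joint continuity of $(h,g)\mapsto F_h(g)$ and of $(h,g)\mapsto DF_h(g)$ by dominated convergence. First I would establish that $DF_h(g)$ defines a bounded linear operator from $\cH_{\alpha/2,\kappa}$ into itself. Linearity in $f$ is immediate. Boundedness is proved in the same way as in Lemma \ref{le:deffp}: near $y = 0$ the singular factor $y^{-\alpha/2-1}$ is absorbed by the cancellation in the bracketed term, which gains a factor $y$ (or $y^{1-\kappa}$ when one invokes the weighted $C^1$-bound of $g$ and $f$ in the norm $\|\cdot\|_\kappa$) by first-order Taylor expansion in $y$ of $e^{-yr h\cdot u}$, of $f(e^{i\theta}+yu)$ and of $g(e^{i\theta}+yu)$; for $y\ge 1$, one dominates the two terms inside the bracket separately using the exponential decay $e^{-r^{\alpha/2}\Re g(\cdot)}\le e^{-r^{\alpha/2}\veps}$ available because $g \in \cH^0_{\alpha/2,\kappa}$. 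Homogeneity of $DF_h(g)(f)$ in $u$ follows from the substitution $y\mapsto \lambda y$ combined with the homogeneity of $g$ and $f$, and the $C^1$ property in $u$ needed for the full norm $\|\cdot\|_\kappa$ is obtained by differentiating once more under the integral, justified by the same majorants.

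Second, for the Fr\'echet property, I would fix $g \in \cH_{\alpha/2,\kappa}^\veps$ and a small perturbation $f$ and expand
$$
e^{-r^{\alpha/2}(g+f)(v)} = e^{-r^{\alpha/2} g(v)}\bigl(1 - r^{\alpha/2} f(v) + \rho_{r,v}(f)\bigr),
$$
where $|\rho_{r,v}(f)| \le \tfrac{1}{2} r^\alpha \|f\|_\infty^2 e^{r^{\alpha/2}\|f\|_\infty}$. When $\|f\|_\infty\le \veps/2$, the extra exponential is absorbed into $e^{-r^{\alpha/2}\Re g(v)}$. Substituting both $v = e^{i\theta}$ and $v = e^{i\theta}+yu$ into the bracket of $F_h(g+f)-F_h(g)$, the linear-in-$f$ terms reproduce (up to sign convention) $DF_h(g)(f)(u)$, while the quadratic remainder is bounded by the same $(\theta,y,r)$-integral as in the first step, with $\|f\|_\infty^2$ replacing $\|f\|_\infty$. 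The analogous argument applied to the first $u$-derivatives of the integrand yields the corresponding bound for the seminorm part of $\|\cdot\|_\kappa$, so that $\|F_h(g+f) - F_h(g) - DF_h(g)(f)\|_\kappa = O(\|f\|_\kappa^2)$.

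Finally, joint continuity of $(h,g)\mapsto F_h(g)$ and of $(h,g)\mapsto DF_h(g)$ on $\cK_1 \times \cH^0_{\alpha/2,\kappa}$ follows from dominated convergence: the integrand is continuous in $(h,g)$ pointwise in $(\theta,y,r,u)$, and on any neighborhood of $(h_0, g_0)$ with $\Re g_0 \ge \veps$ on $S_1^+$ the majorants from the previous steps are uniform in $(h,g)$. Continuous differentiability in $h$ is handled in parallel by differentiating under the integral: $\partial_h F_h(g)$ has an explicit formula in which the factors $e^{-rh\cdot e^{i\theta}}$ and $e^{-yrh\cdot u}$ are multiplied by $-re^{i\theta}$ and $-yru$ respectively, and the same cancellation-plus-decay analysis applies. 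The main technical obstacle throughout is the bookkeeping near $y=0$ when $\kappa>0$: the first-order Taylor cancellation in the bracket only yields $y^{1-\kappa}$ because of the weighted $C^1$-bound, and one must check that this remains integrable once the extra factor $r^{\alpha/2}$ arising from differentiation in $g$ has been incorporated --- this is exactly where the assumption $\kappa \in [0,1)$ enters essentially.
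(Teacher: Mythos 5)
Your overall strategy coincides with the paper's: identify the candidate derivative by formal differentiation of the integrand, verify boundedness by exploiting the cancellation in the bracket near $y=0$ (gaining a factor $y$, or $y^{1-\kappa}$ in the weighted norm) and the exponential decay from $\Re g \ge \veps$ for large $y$ and $r$, then check the Fr\'echet property via a Taylor remainder, and finally establish joint continuity. The paper organizes all these estimates through a single auxiliary family $\varphi^{\zeta,h}_{g,f}$, which makes the bounds for $F_h$, $DF_h(g)$ and $\partial_h F_h$ fall out of one master inequality \eqref{boundvarphi}; your treatment handles each case separately but rests on the same cancellation-plus-decay mechanism. Your explicit second-order bound $|\rho_{r,v}(f)| \le \tfrac12 r^\alpha \|f\|_\infty^2 e^{r^{\alpha/2}\|f\|_\infty}$ is sound once $\|f\|_\infty \le \veps/2$ and in fact fills in a detail the paper leaves as ``follows easily''.

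The one soft spot is the final step, where you attribute continuity of $(h,g)\mapsto DF_h(g)$ in $\mathcal B(\cH_{\alpha/2,\kappa},\cH_{\alpha/2,\kappa})$ to dominated convergence. Dominated convergence readily gives $DF_h(g)(f)\to DF_{h_0}(g_0)(f)$ for each \emph{fixed} $f$, but operator-norm convergence requires a bound uniform over $\|f\|_\kappa\le 1$. The paper gets this for free from an interpolation identity
\[
\varphi^{\alpha/2,h}_{g,f}-\varphi^{\alpha/2,h'}_{g',f}
= -\int_0^1 ds\,\bigl[\varphi^{\alpha/2+1,h_s}_{g,(h-h').*f}+\varphi^{\alpha,h'}_{g_s,f(g-g')}\bigr],
\]
which together with \eqref{boundvarphi} directly yields the Lipschitz estimate $\|DF_h(g)-DF_{h'}(g')\|\le c\,(|h-h'|+\|g-g'\|_\kappa)$. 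Your Taylor-remainder computations already contain the ingredients for such a Lipschitz bound in $g$ (and a parallel one in $h$), so the gap is readily closed, but as written the dominated-convergence appeal does not by itself deliver the operator-norm statement.
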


As a corollary we shall see that all the functions   defined in Proposition \ref{prop:afpi} are Lipschitz in some appropriate norm.

\begin{lem}\label{le:fplip} 
For any $\alpha \in (0,2)$,  $\kappa\in [0,1)$  and $a >0$, $G_z$ is Lipschitz on $\cH^a_{\frac \alpha 2,\kappa}$ : there exists $c >0$ such that for any $z \in  \dC$ and $f,g \in \cH^a_{\frac \alpha 2,\kappa}$,
$$
\| G_z(f)  - G_z(g) \|_\kappa \leq c \, \| f - g  \|_\kappa +(\|f\|_\kappa+\|g\|_\kappa)\|f-g\|_\infty \,. 
$$
Similarly, for any $z \in  \dC$ and $f,g \in \cH^a_{\frac \alpha 2,\kappa}$, any $x,y \in \cK_1$, $\Re(x) \wedge \Re(y) \geq a$, any $p>0$,
$$
| r_{p,z}(f)  - r_{p,z}(g) |  \leq c \, \| f - g  \|_\infty \AND | s_{p,z}(x)  - s_{p,z}(y) |  \leq c \, | x - y  |.
$$
\end{lem}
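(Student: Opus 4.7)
The Lipschitz estimate for $G_z$ reduces, via the identity $G_z(f)(u) = c_\alpha F_{-iz}(f)(\check u)$, to a Lipschitz estimate for $F_h$ uniform in $h\in\cK_1$. I would first invoke Lemma \ref{le:Frechetfp} together with the convexity of $\cH^a_{\alpha/2,\kappa}$ (so that $g_s := sf+(1-s)g$ lies in $\cH^a_{\alpha/2,\kappa}$ for every $s\in[0,1]$) to write
\begin{equation*}
F_h(f) - F_h(g) = \int_0^1 DF_h(g_s)(f-g)\, ds,
\end{equation*}
reducing the lemma to establishing a bilinear bound of the form
\begin{equation*}
\|DF_h(\tilde g)(\phi)\|_\kappa \leq c\|\phi\|_\kappa + c\|\tilde g\|_\kappa\|\phi\|_\infty
\end{equation*}
uniform in $h\in\cK_1$, $\tilde g\in\cH^a_{\alpha/2,\kappa}$ and $\phi\in\cH_{\alpha/2,\kappa}$. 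Integrating in $s$ and using $\|g_s\|_\kappa \leq \|f\|_\kappa + \|g\|_\kappa$ then yields the stated inequality.

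For the $\|\cdot\|_\infty$-part of this bilinear estimate, I would split the integrand appearing in the expression for $DF_h(\tilde g)(\phi)$ from Lemma \ref{le:Frechetfp} as
\begin{align*}
& \phi(e^{i\theta})\bigl[e^{-r^{\alpha/2}\tilde g(e^{i\theta})} - e^{-yrh\cdot u}e^{-r^{\alpha/2}\tilde g(e^{i\theta}+yu)}\bigr] \\
& \qquad {}+ \bigl(\phi(e^{i\theta}) - \phi(e^{i\theta}+yu)\bigr)\, e^{-yrh\cdot u}\, e^{-r^{\alpha/2}\tilde g(e^{i\theta}+yu)}.
\end{align*}
The bracket in the first line is, up to an $r^{\alpha/2}$ factor, the integrand of $F_h(\tilde g)$, whose full integral is finite by Lemma \ref{le:deffp}; this contributes at most $c\|\phi\|_\infty$. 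In the second line, the cancellation $|\phi(e^{i\theta}+yu) - \phi(e^{i\theta})| \leq c y\|\phi\|_\kappa$ (from the mean value theorem combined with the homogeneity of $\nabla\phi$) turns the $y^{-\alpha/2-1}$ singularity at the origin into an integrable $y^{-\alpha/2}$, while the bound $\Re\tilde g(v) \geq a|v|^{\alpha/2}$ provides exponential decay in $r$. No cross term $\|\tilde g\|_\kappa\|\phi\|_\infty$ arises at this stage. It enters only through the derivative part of $\|\cdot\|_\kappa$: after differentiating the integrand of $DF_h(\tilde g)(\phi)$ in $u$, derivatives falling on $\phi$ are controlled by $\|\phi\|_\kappa$ as above, while derivatives falling on $e^{-r^{\alpha/2}\tilde g(e^{i\theta}+yu)}$ bring down a factor $yr^{\alpha/2}\nabla\tilde g$; its interplay with the weight $(i\cdot u)^\kappa$ and the homogeneity of $\nabla\tilde g$ generates precisely the cross term $\|\tilde g\|_\kappa\|\phi\|_\infty$.

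The scalar bounds for $r_{p,z}$ and $s_{p,z}$ are considerably easier. Both are absolutely convergent integrals in $r$ (and $\theta$) whose dependence on the argument enters only through $e^{-r^{\alpha/2}f(e^{i\theta})}$ or $e^{-r^{\alpha/2}x}$. Combining $|e^{-a}-e^{-b}|\leq |a-b|\,e^{-\min(\Re a,\Re b)}$ with the lower bound $\Re f,\Re g\geq a$ (resp.\ $\Re x, \Re y\geq a$) reduces their proofs to bounding $\int_0^\infty r^{p-1}e^{-cr^{\alpha/2}}dr < \infty$. I expect the main obstacle to be the careful management of the singular weight $y^{-\alpha/2-1}$, which requires the precise first-order cancellations to be extracted between the two terms in the integrand of $DF_h$, together with the weight $(i\cdot u)^\kappa$ near the angles where $|i\cdot u|$ degenerates, where the pointwise bound on $\nabla\phi$ or $\nabla\tilde g$ is lost and must be compensated via the homogeneity structure of these gradients.
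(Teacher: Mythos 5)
Your proof is correct and takes essentially the same route as the paper: the interpolation identity $G_z(f)-G_z(g)=\int_0^1 DG_z\bigl(sf+(1-s)g\bigr)(f-g)\,ds$ on the convex set $\cH^a_{\alpha/2,\kappa}$, followed by the bilinear bound $\|DF_h(\tilde g)(\phi)\|_\kappa \le c\,\|\phi\|_\kappa + c\,\|\tilde g\|_\kappa\|\phi\|_\infty$, which the paper has already recorded as estimate \eqref{boundvarphi} inside the proofs of Lemmas \ref{le:deffp}--\ref{le:Frechetfp}. Your decomposition of the $DF_h$ integrand groups the $\phi$-increment and the $e^{-r^{\alpha/2}\tilde g}$-increment in a slightly different order than the paper's \eqref{nm}, but the resulting estimates are equivalent, and your handling of $r_{p,z}$ and $s_{p,z}$ via $|e^{-a}-e^{-b}|\le|a-b|\,e^{-\min(\Re a,\Re b)}$ is exactly the ``straightforward'' argument the paper alludes to.
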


\begin{proof}[Proof of  Lemmas \ref{le:deffp} and \ref{le:Frechetfp}]
We treat simultaneously the case where  $g\in  \cH_{\alpha/2,\kappa}^a$ for some $a>0$ or 
 $g\in  \bar \cH^0_{\alpha/2,\kappa}$ but $\Re(h) \geq  b >0$. We set for $\zeta \geq 0$ (mainly $\zeta=0,\alpha,\alpha/2,1$) 
and $f\in \cH_{\zeta,\kappa}$,
\begin{align*}
& \varphi_{g,f}^{\zeta,h}(u)  =  \int_0 ^ {\frac \pi 2}   \hspace{-5pt} d\theta (\sin 2 \theta  )^{\frac \alpha 2 -1}\, \int_{0}^\infty  \hspace{-5pt} dy\,  y^{-\frac{\alpha}{2}-1}  \\
&\quad \quad \times \int_0^\infty  \hspace{-3pt} dr \, r^{\frac{\alpha}{2}+\zeta -1} \left( f(e^{i \theta} ) e^{- r h .e^{i \theta }   }e^{-r^{\frac \alpha 2 } g(e^{i\theta})} -  f(e^{i \theta}+ yu ) e^{- r h . (e^{i\theta}+y u) }e^{-r^{\frac \alpha 2}g(e^{i \theta} + y   u )}\right).
\end{align*}
This corresponds to the definition of $F_h$ when $f=1$ and $\zeta=0$, and to its derivative in the direction of $f$ when $\zeta=\alpha/2$. 
We need to check that $\varphi_{g,f}^{\zeta,h}$ has finite norm.  To take into account the singularity of the integration in $y$ at the origin, we cut the integral over $y$ in two pieces: one accounts for integration over $[0,1/2]$ and the other for the integration over $[1/2,+\infty)$.  We let $\varphi_{g,f }^{\zeta,h}(u)=\varphi_{g,f (1)}^{\zeta,h}(0)-\varphi_{g,f (1)}^{\zeta,h}(u)+\varphi_{g,f (2)}^{\zeta,h}(u)$ with 
\begin{eqnarray*}
\varphi_{g,f (1)}^{\zeta, h}(u)  &=&  \int_0 ^ {\frac \pi 2}   \hspace{-5pt} d\theta (\sin 2 \theta  )^{\frac \alpha 2 -1}\, \int_{\frac 1 2}^\infty  \hspace{-5pt} dy\,  y^{-1-\frac{\alpha}{2}}   \int_0^\infty  \hspace{-3pt} dr \, r^{\frac \alpha 2+\zeta-1} e^{- r h .(e^{i \theta }+yu)   }  f(e^{i \theta}+ yu ) e^{-r^{\frac \alpha 2}g(e^{i \theta} + y   u )},\\
\varphi_{g,f (2)}^{\zeta,h}(u)  &=&  \int_0 ^ {\frac \pi 2}   \hspace{-5pt} d\theta (\sin 2 \theta  )^{\frac \alpha 2 -1}\, \int_{0}^{\frac 1 2}  \hspace{-5pt} dy\,  y^{-1 - \frac{\alpha}{2}}  \\
&&\quad  \times \int_0^\infty  \hspace{-3pt} dr \, 
 r^{\frac{\alpha}{2}+\zeta -1} \left( f(e^{i \theta} ) e^{- r h .e^{i \theta }   }e^{-r^{\frac \alpha 2 } g(e^{i\theta})} -  f(e^{i \theta}+ yu ) e^{- r h . (e^{i\theta}+y u) }e^{-r^{\frac \alpha 2}g(e^{i \theta} + y   u )}\right).
\end{eqnarray*}
For the first expression,  we can bound the integral uniformly in $\theta$ by using $\Re(h.u )\ge \Re(h)|u|$ for all $u\in \mathcal{K}_1^+$.  We get a finite constant $C$ such that
 \begin{eqnarray*}
|\varphi_{g,f (1)}^{\zeta,h}(u)|  &\le &C \|f\|_\infty \sup_{\theta\in [0,\frac{\pi}{2}]}
\int_{\frac 1 2}^\infty  \hspace{-5pt} dy\,  y^{-1-\frac{\alpha}{2}} \int_0^\infty  \hspace{-3pt} dr \, r^{\frac{\alpha}{2}-1+\zeta} 
  | e^{i \theta}+ yu |^{\zeta}  e^{- a r^{\frac \alpha 2} | e^{i \theta} + y   u |^{\frac \alpha 2} }e^{-b r |e^{i\theta}+yu|}\\
  &\le&C\|f\|_\infty \sup_{\theta\in [0,\frac{\pi}{2}]}
  \int_{\frac 1 2}^\infty  \hspace{-5pt} dy\,  y^{-1-\frac{\alpha}{2}} | e^{i \theta} + y   u |^{  -\frac \alpha  2} 
\end{eqnarray*}
which are bounded uniformly in $u\in \cK^+_1$ (observe that $| e^{i \theta} + y   u | \geq 1$ for $e^{i \theta} \in S_1 ^+$ and $y \geq 0$). 
The constant $C$ depends on $a \vee b >0$. We used (and will use again repeatidly)  the following straightforward inequality :  for any $h\in \cK_1$, $x \in \cK_1$ and $\beta >0$, 
\begin{equation}\label{eq:gammaHB}
\int_0^\infty \left|  r ^{\beta -1} e ^{- rh} e ^{- r^{\frac \alpha 2} x } \right|d r  \leq\min\PAR{  \Gamma \left( \frac{2 \beta }{ \alpha}\right)  \Re (x) ^{-\frac{2 \beta }{ \alpha}}, \Gamma(\beta) \Re(h)^{-\beta} }\,.
\end{equation} 
Similarly for $ \varphi_{g,f (2)}^{\zeta,h}$, we have to bound, for $0 \leq y \leq 1 /2$ and $0 \leq \theta \leq \pi/2$,
\begin{align}
& L(r,y,\theta):= \left|  e^{- r h .e^{i \theta }   }\left( f(e^{i \theta} ) e^{-r^{\frac \alpha 2 } g(e^{i\theta})} -  f(e^{i \theta}+ yu ) e^{- y  r h .  u }e^{-r^{\frac \alpha 2}g(e^{i \theta} + y   u )}\right)\right| \label{nm} \\
& \leq  \left|  ( f(e^{i \theta} ) -  f(e^{i \theta}+ yu ) e^{-y r h . u}   ) e^{-r^{\frac \alpha 2 }   g(e^{i\theta})} \right| e^{-br} +  |f(e^{i \theta}+ yu ) |\left| e^{-r^{\frac \alpha 2 } g(e^{i\theta})} - e^{-r^{\frac \alpha 2}g(e^{i \theta} + y   u )}\right| e^{-b r}\,. \nonumber 
\end{align}
To bound increments of functions in terms of the $\kappa$-norm, let us use that the linearity of $x \mapsto i.x$ and that if $g \in \cH_{\alpha/2}$ then its derivative is homogeneous of order $\alpha/2 - 1$. We get for  $z,w   \in \cK_1^+$, 
\begin{eqnarray}
|g ( z ) - g (w) | & \leq  &  \| g \|_\kappa | z  - w  | \int_0^1 dt \frac{| z + t (w - z) |^{\kappa + \frac \alpha 2-1} }{|i.(z +t(w- z))|^\kappa} .\nonumber
\end{eqnarray}
A similar bound holds for $f$ with $\zeta$ in place of $\alpha/2$.
Notice that for $z=e^{i\theta}$ and $w=e^{i\theta}+yu$, $z + t (w - z) = e^{i \theta} + t y u$. Using that  $ | e^{-x} - e^{-y} | \leq |x - y | e^{ - \Re (x) \wedge \Re(y) }$ and $1 \leq  |e^{i\theta} +t yu|  \leq 3/2 $ for $t \in [0,1]$ and $y \in [0,1/2]$, we deduce that
 $$L(r,y,\theta)\le C y( \|  f \|_\kappa +\|f\|_\infty r +\|f\|_\infty\|g\|_\kappa r^{\frac{\alpha}{2}}) e^{- a r^{\frac \alpha 2 }-br }\phi(e^{i\theta},e^{i\theta}+yu)\,,$$
 where $C$ is a constant and
 $$
 \phi(z,w) = 1 + \int_0^1\frac{ dt }{|i.(z +t(w- z))|^\kappa}. 
 $$
For $z=e^{i\theta}$ and $w=e^{i\theta}+yu$, 
\begin{equation*}\label{pok}
| i. ( z + t( w - z) ) | = \ABS{ \cos (\theta) - \sin (\theta) + t y (\Re (u) - \Im(u))}. \end{equation*}
Now, the $[0,\pi/2] \to [-1,1]$ function $w(\theta) = \cos ( \theta) - \sin (\theta) $ is  decreasing and $|w'(\theta)|  = \cos \theta  + \sin \theta \in [1,\sqrt 2]$. Since $ | t y (\Re (u) - \Im(u)) | \leq 1/2$, it follows that $\theta \mapsto | i. ( z + t( w - z) ) |$ vanishes once at $\theta_0 \in ( \delta, \pi/2 - \delta)$ for some $\delta >0$.
As a consequence, we find that since $\kappa<1$,
$$\sup_{y\in [0,1/2], u\in S_1^+} \int_0^{\frac \pi 2} d\theta |(\sin 2 \theta  )^{\frac \alpha 2 -1}| \phi(e^{i\theta},e^{i\theta}+yu) | <+\infty\,.$$
Therefore, we can integrate $L(r,y,\theta)$ under $\theta$ and $y$ to find that
\begin{align*}
&\int_0^{\frac{\pi}{2}} d\theta (\sin 2 \theta  )^{\frac \alpha 2 -1}\int_{0}^{1/2} \hspace{-5pt} dy\,  y^{-\frac{\alpha}{2}-1} \int_0^\infty  \hspace{-3pt} dr \, r^{\frac{\alpha}{2}+\zeta-1}
 L(r,y,\theta)
 \\ 
 &\le  C \int_0^{\frac{\pi}{2} }d\theta (\sin 2 \theta  )^{\frac \alpha 2 -1}\int_{0}^{1/2}  \hspace{-5pt} dy\,  y^{-\frac{\alpha}{2}-1} \int_0^\infty  \hspace{-3pt} dr \, r^{\frac{\alpha}{2}+\zeta-1}y \left(
 \|f\|_\kappa +\|f\|_\infty r \|g\|_\kappa\|f\|_\infty r^{\frac{\alpha}{2}}\right)e^{-a r^{\frac \alpha 2}-br }\phi(e^{i\theta},e^{i\theta}+yu)\\
 &\le C  \left( 
 \|f\|_\kappa  
 +\|g\|_\kappa\|f\|_\infty \right),
\end{align*}
where the constant $C$ changes from line to line (and depends of $a \vee b$).
We thus obtain that $\|\varphi^{\zeta,h}_{g,f(k)}  \|_{\infty} < \infty$ for $k \in \{1,2\}$, and collecting all bounds  that $\|\varphi^{\zeta,h}_{g,f}  \|_{\infty} < \infty$ is finite. We now check that $\|(i.u)^\kappa \partial_\veps \varphi^{\zeta,h}_{g,f}  \|_{\infty} < \infty$ for $\veps \in \{1,i\}$.  To this end, notice that
by homogeneity, 
\begin{equation}\label{derhom}
  \partial_\veps  e^{ -r^{\frac \alpha 2}g(e^{i \theta} + y   u )}  =  y r^{\frac \alpha 2} |e^{i \theta} + y u |^{\frac \alpha 2 -1 }   \partial_\veps g (v)
e^{ -r^{\frac \alpha 2}g(e^{i \theta} + y   u )}  \end{equation}
where $v = ( e^{i \theta} + y   u ) / | e^{i \theta} + y   u| \in S_1^+$.
Therefore, we get for $g\in\cH_{\alpha/2,\kappa}$ and $f\in \cH_{\zeta,\kappa}$,
\begin{align*}
&  \left|  \partial_\varepsilon[ f(e^{i \theta}+ yu ) e^{- r h . (e^{i\theta}+yu)  }e^{-r^{\frac \alpha 2}g(e^{i \theta} + y   u )}]\right|  \leq  
 y r |h |  | e^{i \theta}+ yu |^{\zeta} \| f \|_\infty e^{-a r^{\frac \alpha 2}| e^{i \theta}+ yu |^{\frac \alpha 2 } -br|e^{i \theta}+ yu |} \\
 &\quad\quad +
y | e^{i \theta}+ yu |^{\zeta -1+\kappa} \|f \|_\kappa |i.(e^{i\theta}+yu)|^{-\kappa}  e^{-a r^{\frac \alpha 2}| e^{i \theta}+ yu |^{\frac \alpha 2 } -br|e^{i \theta}+ yu |} \\
& \quad \quad +  y r^{\frac \alpha   2}  |i.(e^{i\theta}+yu)|^{-\kappa} | e^{i \theta}+ yu |^{\frac{\alpha}{2}+\zeta -1+\kappa} \| f \|_\infty  \|  g \|_\kappa e^{-a r^{\frac \alpha 2}| e^{i \theta}+ yu |^{\frac \alpha 2 }-br|e^{i \theta}+ yu | }. 
\end{align*}
Using \eqref{eq:gammaHB}, we get for some constant $C$, depending on $(\alpha,a,b)$ 
such that

\begin{align}
& \int_0 ^\infty \hspace{-3pt} dr \, r^{\frac{\alpha}{2}+\zeta-1}  \left | \partial_\veps  [f(e^{i \theta}+ yu ) e^{- y  r h .  u }e^{-r^{\frac \alpha 2}g(e^{i \theta} + y   u )}]\right|  \leq   Cy \bigg ( |h |  | e^{i \theta}+ yu |^{-\frac{\alpha}{2}-1} \| f \|_\infty 
  \nonumber\\
& \quad \quad +    | e^{i \theta}+ yu |^{-\frac \alpha 2 -1+\kappa} \|f \|_\kappa |i.(e^{i\theta}+yu)|^{-\kappa} +     |i.(e^{i\theta}+yu)|^{-\kappa} | e^{i \theta}+ yu |^{-\frac \alpha 2 -1+\kappa} \| f \|_\infty  \|  g \|_\kappa\Bigg) . \label{lkj}
\end{align}
The above expression is integrable on $[0, \infty) \times [0,\pi/2]$ with respect to $y^{-\frac{\alpha}{2}-1}  (\sin 2 \theta  )^{\frac \alpha 2 -1} dy d\theta$ for any $\kappa\in (0,1)$. Indeed, take  $B\ge 0$.
Then, we claim that 
\begin{align}
J_{B,\alpha,\kappa}(u):=&|i.u|^\kappa \int_0^{\frac{\pi}{2} } d\theta (\sin 2 \theta  )^{\frac \alpha 2 -1}  \int_{0}^\infty  \hspace{-5pt} dy\,  y^{-\frac \alpha 2} \frac{ |e^{i \theta} + y u |^{-B}}{|i.(e^{i \theta} + y u)|^\kappa}\label{poi}
\end{align}
is uniformly bounded  if $B+\kappa > 1-\frac{\alpha}{2}$.  To see that, observe that if $i.u=0$ the integral is clearly finite, where as otherwise
$$J_{B,\alpha,\kappa}(u)= \int_0^{\frac{\pi}{2} } d\theta (\sin 2 \theta  )^{\frac \alpha 2 -1}  \int_{0}^\infty  \hspace{-5pt} dy\,  y^{-\frac \alpha 2} \frac{ |e^{i \theta} + y u |^{-B}}{|\frac{i.e^{i \theta}}{i.u} + y |^\kappa}\,.$$
Cut  this integral into two pieces: either $\theta$ is small or close to $\pi/2$, say in $[0,\pi/6]\cup [ \pi / 3,\pi/2]$, or it 
is in $(\pi/6,\pi/3)$, at distance greater than $\pi/6$ from $0$ and $\pi/2$.  
In the first case,  we integrate in $y$ a function which is uniformly bounded at a positive distance of $y_0= - i.e^{i\theta}/i.u$ which is away from the 
origin,
  blows up at most at  $y_0$ where it behaves like $|y-y_0|^{-\kappa}$ and 
behaves like $y^{-B-\kappa-\frac{\alpha}{2}}$ at infinity (independently of $i.u$), and hence  with uniformly bounded integral.
In the second case, we can integrate  first on $\theta$, bounding $(\sin 2\theta)^{-1}$  uniformly from above,
whereas we can use the  bound $|e^{i\theta}+yu|\ge 1 \vee y$ for $\theta\in[0,\frac{\pi}{2}]$, $u\in S_1^+$. 
Then,  observe that 
$$\psi(x)=\int_{\frac \pi 6}^{\frac{\pi}{3}} \frac{d\theta}{ |i.e^{i\theta}+x|^\kappa}$$
 is uniformly bounded, and goes to zero as $ |x|^{-\kappa}$ as 
$x$ goes to infinity. Hence, we bound $\psi$  from above uniformly by a constant times $(1+|x|^\kappa)^{-1}$.
This implies that the second part of the integral is bounded, up to a multiplicative constant, by 
$$
|i.u |^{\kappa} \int_{0}^\infty  \hspace{-5pt} dy\,  y^{-\frac \alpha 2} \frac{  (1 \vee y )^{-B} }{ 1 + |i.u|^\kappa  y^\kappa} = |i.u |^{\kappa+ \frac \alpha 2 -1} 
 \int_{0}^{|i.u|}  \hspace{-5pt} dx\, \frac{ x^{-\frac \alpha 2}}{ 1 + x^\kappa} + |i.u |^{\kappa+ \frac \alpha 2 + B -1} 
\int_{|i.u|}^\infty  \hspace{-5pt} dx\, \frac{ x^{-\frac \alpha 2-B}}{ 1 + x^\kappa},
$$
which is uniformly bounded if $B+\kappa > 1-\frac{\alpha}{2}$ (and even vanishes as $|i.u|$ goes to $0$ : the first term is of order $|i.u|^\kappa$ and the order of the second term  depends on whether $\alpha / 2 + B$ is less, equal or larger than $1$).

Applying \eqref{poi} in \eqref{lkj}  (with $B=1+\alpha/2-\kappa$ or $1+\alpha/2$) shows that $\varphi_{g,f}^{\zeta,h}\in \cH _{\alpha/2,\kappa}$   and in fact
collecting the previous bounds we get
\begin{equation}\label{boundvarphi}\|\varphi^{\zeta,h}_{g,f}\|_\kappa\le C( \|f\|_\kappa+\|f\|_\infty\|g\|_\kappa)\end{equation}
with a finite constant $C$ depending on $a \vee b>0$. 

 In particular $F_h$ is bounded from $\cH^a_{\alpha/2,\kappa}$ into
$\cH_{\alpha/2,\kappa}$ if $a >0$ and $F_h$ is bounded from $\bar \cH^0_{\alpha/2,\kappa}$ into
$\cH_{\alpha/2,\kappa}$ if $\Re(h) \geq b$. In this last case, our proof also shows  that 
$$
\| F_h (g) -F_h(0)\|_\kappa \leq C_{\Re(h)} \| g \|_\kappa. 
$$
for some constant $C_{\Re(h)}$ depending on $\Re(h)$. However, from the homogeneity relation, for $t >0$,  
$$
F_h ( t^{\alpha/2} g)  = t^{-\alpha/2} F_{ h / t} (g). 
$$
we get, with $t = \Re(h)$, 
\begin{eqnarray*}
\| F_h (g)\|_\kappa &=& t^{-\alpha/2}\| F_{ h / t} (t^{-\alpha/2}g)\|_\kappa \\
&\le& t^{-\alpha/2}\| F_{ h / t} (0)\|_\kappa +t^{-\alpha/2}\| F_{ h / t} (t^{-\alpha/2}g)-F_{h/t}(0)\|_\kappa\le Ct^{-\alpha/2} + 
  t^{-\alpha} C_1 \| g\|_\kappa
\end{eqnarray*}
where we noticed that 
 $\|F_h(0)\|_\kappa$ is uniformly bounded when $\Re(h)=1$.  This  completes the proof of Lemma \ref{le:deffp}.
The fact that $D F_h (g) (f) (u) = \varphi_{g,f}^{\alpha,h}(u)$ follows easily since $g \in \cH^a_{\alpha/2,\kappa}$ implies that $g + f \in \cH^{a'}_{\alpha/2,\kappa}$ with $a' = a - \|f \|_{\infty}$.  
Finally, from \eqref{boundvarphi}, $DF_h(g) $ is a bounded operator.

We  now check the continuity of $DF_h(g)$ in $h$ and $g$. It is sufficient to prove that for any $a >0$, there exists a constant $c = c(\alpha, a)$ such that for all $f \in \cH_{\alpha/2,\kappa}$, $g,g' \in \cH^a_{\alpha/2,\kappa}$, $h,h' \in \cK_1$,
\begin{equation}\label{eq:contDF}
\| D F_h (g) (f) -  D F_{h'} (g') (f) \|_\kappa  \leq c \| f \|_\kappa ( |h - h '| + \| g - g '\|_\kappa ).  
\end{equation}
To this end, we prove the same bound for $\varphi^{\alpha/2,h}_{g,f }$  instead of $DF_h(g)$.
By interpolation, we may write that 
$$\varphi^{\alpha/2,h}_{g,f (2)}-\varphi^{\alpha/2,h'}_{g',f(2)}=-\int_0^{1} ds [\varphi^{\alpha/2+1,h_s}_{g,(h-h').* f }
+\varphi^{\alpha, h'}_{g_s,f(g-g')}]$$
with $h_s=s h+(1-s)h', g_s=sg+(1-s)g'$ and $ [ (h-h').*f ](u)=(h-h').u f(u)$ is in $\cH_{\alpha/2+1}$.
 The bound then again follows from \eqref{boundvarphi}. 
It completes  the proof of \eqref{eq:contDF}. In particular, we see that $h\to DF_h(g)(f)$ is Lipschitz. Similarly, we can compute the partial derivative in $h$ of $(h,g) \mapsto F_h(g)$ with respect to the real ($\veps = 1$) or imaginary part ($\veps = i$) of $h$. We find
$$\partial_{\veps} F_h(g)= \varphi^{1,h}_{g,\veps.*1}$$
and see that it has finite $\|.\|_\kappa$ norm and it is continuous in $g$.

Finally, the last statement of Lemma \ref{le:deffp} is an immediate consequence of inequality \eqref{eq:gammaHB}. 
 \end{proof}

\begin{proof}[Proof of Lemma \ref{le:fplip}]
From $$G_z(f)-G_z(g)=\int_0^1d\theta  DG_z(\theta f+(1-\theta )g)(f-g) $$
we deduce from \eqref{boundvarphi} that
$$\|G_z(f)-G_z(g)\|_\kappa\le c \,   \| f - g  \|_\kappa + c \, (\|f\|_\kappa+\|g\|_\kappa)  \| f - g  \|_\infty. 
$$ 
 The proof of the second statement is straightforward. \end{proof}

\subsection{Implicit function theorem : proofs of Proposition \ref{fixlem} and Proposition \ref{le:fploccont}}
In this subsection, we analyze the solutions  for the fixed point equation governed by $G_z$ for $z$ small. 
 It is proved in \cite{BCC} that, for each integer $k$, as $n$ goes to infinity, the analytic function $z \mapsto R_{kk}(z)$ from $\dC_+$ to $\dC_+$ converges weakly for the finite dimensional convergence to the random analytic function $z \mapsto R_\star(z)$ from $\dC_+$ to $\dC_+$ which is the unique solution of the recursive distributional equation for all $z \in \bC_+$,
\begin{equation}\label{eq:RDE}
R_\star(z) \stackrel{d}{=}  - \left(z + \sum_{k \geq 1} \xi_k R_k (z)\right)^{-1},
\end{equation}
where $\{\xi_k\}_{k \geq 1}$ is a Poisson process on $\bR_+$ of intensity measure $\frac{\alpha}{2} x^{\frac \alpha  2 - 1} dx$, independent of $(R_k)_{k\geq 1}$, a sequence of independent copies of $R_\star$. In \cite{BCC}, $R_\star(z)$ is shown to be the resolvent at a vector of a random self-adjoint operator defined associated to Aldous' Poisson Weighted Infinite Tree. Its order parameter is given by
$$\gamma_z^\star(u):= \Gamma(1-\frac{\alpha}{2})\mathbb E[(-i R_\star(z).u)^{\frac{\alpha}{2}}] \in \bar \cH^0_{\alpha/2} \,.$$
Integrating \eqref{eq:RDE}, it turns out that $\gamma^\star_z$ satisfies the following fixed point equation.

\begin{lem}\label{le:fpgamma}
Let $z \in  \bC_+$ and $ 0 < \alpha  < 2$. Then for all $u \in \cK_1^+$, $$\gamma_z^\star (u )  = G_z(\gamma_z^\star)(u)= c_\alpha F_{-iz} ( \gamma_z^\star )  (\check u ),$$
where $c_\alpha$ and $\check u$ are  defined in \eqref{defc}. Moreover, for any $p >0$, $\E | R_\star (z)|^p = r_{p,z} (\gamma_z^\star)$ and $\dE (-i R_{\star})^p=  s_{p,z}(\gamma^\star_z(1))$. 
\end{lem}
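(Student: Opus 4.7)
The fixed point equation follows from integrating the RDE \eqref{eq:RDE}: from $R_\star = -(z+S)^{-1}$ with $S = \sum_k \xi_k R_k$, one has $-iR_\star = i(z+S)^{-1}$, and the identity has to be obtained by taking expectations of powers through integral representations. The main tools are the Laplace-type identities
$$w^{-p} = \frac{1}{\Gamma(p)} \int_0^\infty r^{p-1}e^{-rw}\,dr,\qquad w^{\alpha/2} = \frac{\alpha/2}{\Gamma(1-\alpha/2)} \int_0^\infty (1-e^{-rw}) r^{-\alpha/2-1}\,dr,$$
valid for $\Re w > 0$, together with the Campbell (Laplace functional) formula for the Poisson process $(\xi_k)$.

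I would first treat the moment identities. For $\E(-iR_\star)^p$, the first representation applied with $w = (-iR_\star)^{-1} = -i(z+S)$ (which has $\Re w = \Im(z+S) > 0$) and Fubini give $\E(-iR_\star)^p = \Gamma(p)^{-1}\int r^{p-1}e^{irz}\,\E e^{irS}\,dr$. Campbell's formula provides $\E e^{irS} = \exp(\int \E[e^{irxR_\star} - 1]\tfrac{\alpha}{2} x^{\alpha/2-1}\,dx)$; after the substitution $x \mapsto x/r$ the inner integral is, via the second representation, proportional to $-r^{\alpha/2}\Gamma(1-\alpha/2)\E(-iR_\star)^{\alpha/2} = -r^{\alpha/2}\gamma_z^\star(1)$, matching the definition of $s_{p,z}(\gamma_z^\star(1))$ up to conventions. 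The identity $\E|R_\star|^p = r_{p,z}(\gamma_z^\star)$ is the same argument applied to $|R_\star|^p = (R_\star\bar R_\star)^{p/2}$: handling the pair $(R_\star, \bar R_\star)$ forces the introduction of a polar angle $\theta \in [0,\pi/2]$, producing the Jacobian $(\sin 2\theta)^{p/2-1}$ and an $e^{-r^{\alpha/2}\gamma_z^\star(e^{i\theta})}$ factor after Campbell.

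For the fixed point equation itself, the starting identity
$$\gamma_z^\star(u) = \tfrac{\alpha}{2}\,\E\int_0^\infty (1 - e^{-r\,(-iR_\star).u})\,r^{-\alpha/2-1}\,dr$$
follows from the definition of $\gamma_z^\star$ together with the second integral representation. Since $h \mapsto h.u$ is real-bilinear, $(-iR_\star).u = i\Re u\cdot(z+S)^{-1} - i\Im u\cdot\overline{(z+S)^{-1}}$, involving both $(z+S)^{-1}$ and its conjugate. Expanding each inverse via $w^{-1}=\int_0^\infty e^{-tw}\,dt$ produces two positive scale parameters which naturally combine into polar coordinates $(r,\theta)$ with $\theta\in[0,\pi/2]$ and a further relative scale $y\geq 0$, reproducing exactly the $(r,y,\theta)$-parameterization used in the definition of $F_h$. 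Applying Campbell's formula to the joint Laplace transform in $S$ and $\bar S$ contracts the Poisson sum into $e^{-r^{\alpha/2}\gamma_z^\star(e^{i\theta})}$ factors, and the substitution $\check u = i\bar u$ converts the real-bilinear action $.u$ into the direction variable appearing inside $F_{-iz}$; the combinatorial prefactors collapse to $c_\alpha$.

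The main obstacle is bookkeeping: matching the $(r,\theta,y)$ parameterization hard-coded in the definition of $F_h$ with the natural integration variables produced by applying the integral representations to the RDE, and carefully tracking the substitution $\check u = i\bar u$ through the real-bilinear pairing $h.u$. Absolute convergence, which justifies Fubini throughout, follows from $\Im z > 0$ (giving the pointwise decay $|e^{it(z+S)}| \leq e^{-t\Im z}$) and from Lemma \ref{le:deffp}, which already established the integrability of the relevant $F_h$-type kernels on $\bar\cH^0_{\alpha/2}$.
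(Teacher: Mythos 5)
Your treatment of the two moment identities is sound and captures the mechanism the paper uses (the paper itself just cites Corollary \ref{cor:stg} and the L\'evy--Khintchine identity \eqref{eq:LK}, so you are supplying the content of a proof the paper elides). Two small points there: the intensity of the Poisson process is $\tfrac{\alpha}{2}x^{-\alpha/2-1}\,dx$, not $\tfrac{\alpha}{2}x^{\alpha/2-1}\,dx$, so the Campbell integral you display has an exponent sign flipped; and the apparent sign mismatch with $s_{p,z}$ is a typo in the paper, since as written $s_{p,z}(x) = \Gamma(p)^{-1}\int r^{p-1}e^{-irz-r^{\alpha/2}x}\,dr$ diverges for $\Im z>0$ (it should be $e^{+irz}$, equivalently $e^{-r(-iz)}$, which is what your computation produces).

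For the fixed point equation, however, your outlined route contains a genuine gap in the \emph{order} of operations. You first apply the representation $w^{\alpha/2}\propto \int (1-e^{-tw})t^{-\alpha/2-1}dt$ to $w=(-iR_\star).u$, arriving at $\gamma_z^\star(u) = \tfrac{\alpha}{2}\int (1-\E\,e^{-t(-iR_\star).u})\,t^{-\alpha/2-1}\,dt$. At that point the quantity $(-iR_\star).u = (h+S').\check u/|h+S'|^2$ (with $h=-iz$, $S'=-iS$) sits \emph{inside} an exponential, and its dependence on $S'$ is through $|h+S'|^{-2}$. "Expanding each inverse via $w^{-1}=\int e^{-tw}dt$" is then not a legitimate move: you would be substituting an integral representation of $|h+S'|^{-2}$ into the argument of an exponential, which does not linearize anything, and the L\'evy--Khintchine formula requires a sum $\sum_k\xi_k w_k$ that is \emph{linear} in the Poisson marks. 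The correct order (this is precisely what the proof of Corollary \ref{cor:stg} does) is to factorize \emph{first}: write $((-iR_\star).u)^{\alpha/2} = \bigl((h+S').\check u\bigr)^{\alpha/2}\,\bigl|h+S'\bigr|^{-\alpha}$ (legitimate because the denominator is a positive real), then expand the numerator factor by $w^{\alpha/2}=\tfrac{\alpha/2}{\Gamma(1-\alpha/2)}\int x^{-\alpha/2-1}(1-e^{-xw})dx$ and the denominator factor by $|w|^{-\alpha} = w^{-\alpha/2}\bar w^{-\alpha/2} = \Gamma(\alpha/2)^{-2}\iint a^{\alpha/2-1}b^{\alpha/2-1}e^{-aw-b\bar w}\,da\,db$. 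Only after this do the three integration variables appear, with the $(a,b)\to(r,\theta)$ polar change producing the $(\sin 2\theta)^{\alpha/2-1}$ Jacobian, and the final substitution $x=ry$ giving the $(r,\theta,y)$ bookkeeping of $F_h$. Crucially, every exponent is now linear in $S'$, so Fubini plus L\'evy--Khintchine contract the Poisson sum. Your description assigns $(r,\theta)$ to expansions of $(z+S)^{-1}$ and its conjugate performed \emph{after} the $w^{\alpha/2}$ step; that derivation does not close. (Note also that what needs to be Gamma-expanded is $(h+S')^{-\alpha/2}$, not $(h+S')^{-1}$; the formula $w^{-1}=\int e^{-tw}dt$ is only the $\beta=1$ case of $w^{-\beta}=\Gamma(\beta)^{-1}\int t^{\beta-1}e^{-tw}dt$.)
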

\begin{proof}
The statements are a consequence of the L\'evy-Khintchine formula applied to the Poisson process $(\xi_k)$, see e.g. \cite[(4.5)]{BCC}: if $(w_k)$ are independent of $(\xi_k)$, i.i.d. complex random variables with $\Re(w_k) > 0$ we  have \begin{equation}\label{eq:LK}
\E \exp \PAR{ - \sum_{k} \xi_k w_k } = \exp\PAR{ -  \Gamma \PAR{ 1 - \frac \alpha 2}\E w_1^{\alpha/2} }.
\end{equation}
The first statement of the lemma is \cite[Lemma 5.3]{BG}. The second statement is a straightforward modification of the forthcoming Corollary \ref{cor:stg} and \eqref{eq:LK}. To avoid repetition we skip the details of the proof.  
\end{proof}

We know that for $z = i \eta$, $\eta > 0$, $R_\star(i \eta)$ is pure imaginary (see \cite{BCC}). It follows that $\gamma^\star_{i \eta} ( u ) = (1.u ) ^{\frac \alpha 2} a_{\eta}$, where $a_\eta$ satisfies an explicit fixed point equation. We know that $a_\eta \to a_0$ as $\eta \to 0$ and, by \cite[Lemma 4.3]{BCC11}, 
$$
a_0 =\left( \frac{ \Gamma \left ( 1- \frac \alpha 2 \right)}{ \Gamma \left ( 1+ \frac \alpha 2 \right)} \right)^{1/2}.
$$
In particular, $\gamma^\star_0 := (1.u ) ^{\frac \alpha 2} a_{0}$ is in $\cH^0_{\alpha/2}$, it is the limit of $\gamma^\star_{i\eta}$ as $\eta \to 0$ and, by Lemma \ref{le:Frechetfp}, it satisfies the fixed point equation $\gamma_0^\star  = G_0(\gamma_0^\star)$.  We are interested in $D G_0 ( \gamma^\star_0)$. First, by Lemma \ref{le:Frechetfp}, 
\begin{eqnarray}
 DF_0 ( \gamma^\star_0 ) (f) (u)  &=&  \int_0 ^ {\frac \pi 2}   \hspace{-5pt} d\theta (\sin 2 \theta  )^{\frac \alpha 2 -1}  \int_{0}^\infty  \hspace{-5pt} dy\,  y^{-\frac{\alpha}{2}-1} \,  \nonumber \\
&&\quad \quad \times\int_0^\infty  \hspace{-3pt} dr \, r^{\alpha-1} \left( f(e^{i \theta} ) e^{-r^{\frac \alpha 2 } a_0 ( 1 .e^{i\theta})^{\frac \alpha 2}} -  f(e^{i \theta}+ yu ) e^{-r^{\frac \alpha 2}a_0 (1 . ( e^{i \theta} + y   u ) )^{\frac \alpha 2}  }\right) \nonumber \\
&  =&  \frac {2}{\alpha a_0^2} \int_0 ^ {\frac \pi 2}   \hspace{-5pt} d\theta (\sin 2 \theta  )^{\frac \alpha 2 -1}  \int_{0}^\infty  \hspace{-5pt} dy\,  y^{-\frac{\alpha}{2}-1} \left( \frac{ f(e^{i \theta} )}{ ( 1 . e^{i \theta} ) ^\alpha}  -   \frac{ f(e^{i \theta}+ yu ) }{( 1 . ( e^{i \theta} + y u )  ) ^\alpha}  \right). \label{eq:DF0}
\end{eqnarray}

Observe that, for $z  \in \cK^+_1$, $(1.  z ) = \Re(z) + \Im(z) $ and $|z| \leq ( 1. z) \leq \sqrt 2 |z|$.  Define the unitary operator, $J(f)(u) = f(\check u)$.  and for $\alpha \in (0,2)$, we consider the operator on $\cH_{\alpha/2,\kappa}$ given by
$$
K_\alpha =  - c_\alpha  DF_0 (\gamma^\star_0) J,
$$
where, with an abuse of notation, $DF_0 (\gamma^\star_0)$ is the operator defined on the right hand side of \eqref{eq:DF0}.  Then $-K_\alpha$ is precisely equal to $D G_0 ( \gamma^\star_0)$. 
Our goal is to apply the implicit function theorem to $I + K_\alpha$. The main result of this section is the following result.

\begin{theorem}\label{th:4IFT}
Let $\kappa\in [0,1)$. Let $\cA$ be the subset of $\alpha$ in $(0,2)$ such that $I + K_\alpha$ is not an isomorphism of $\cH_{\alpha/2,\kappa}$.
 If $F$ is a closed subset of $(0,2) \backslash\{1/2,1\}$, then $\cA \cap F$ is finite.   
\end{theorem}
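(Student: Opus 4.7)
The plan is to apply the analytic Fredholm theorem to the family $\alpha \mapsto I + K_\alpha$. Because the space $\cH_{\alpha/2,\kappa}$ depends on $\alpha$ through its homogeneity exponent, the first step is to transport the family onto a fixed Banach space: each $g \in \cH_{\alpha/2,\kappa}$ is determined by its restriction to the arc $S_1^+$, so one identifies $\cH_{\alpha/2,\kappa}$ with a fixed space $E_\kappa$ of $C^1$ functions on $S_1^+$ equipped with the obvious analogue of the norm \eqref{eq:defnorm}. Under this identification, $K_\alpha$ becomes an operator on the single space $E_\kappa$, and it is this transported family to which Fredholm analytic continuation will be applied.

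The next step, which I expect to be the main technical obstacle, is to show that $K_\alpha$ is compact on $E_\kappa$ for every $\alpha \in (0,2)\setminus\{1/2,1\}$. The formula \eqref{eq:DF0} exhibits $K_\alpha$ as an integral operator, and the same cancellation between the two terms of the integrand near $y=0$ that was used in the proof of Lemma \ref{le:deffp}, together with the explicit $(y,\theta)$-integrations, should produce strictly more regularity in the output than in the input. Quantitatively, the goal is to prove that $K_\alpha$ maps $E_\kappa$ continuously into some smaller space $E'$ (for example a H\"older-type refinement of $E_\kappa$, or $E_{\kappa'}$ for some $\kappa' > \kappa$) whose inclusion into $E_\kappa$ is compact by Arzel\`a--Ascoli. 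The subtlety is that the norm $\|\cdot\|_\kappa$ carries the singular weight $|(i.u)^\kappa|$ on derivatives (which blows up at $u = e^{i \pi/4}$) and the kernel is singular both at $y = 0$ and at this same locus, so the smoothing estimate must be proved uniformly with respect to both singularities.

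I would then establish that $\alpha \mapsto K_\alpha \in \mathcal{B}(E_\kappa)$ is norm-holomorphic in a complex neighborhood of each $\alpha_0 \in (0,2)\setminus\{1/2,1\}$. The integrand in \eqref{eq:DF0} depends analytically on $\alpha$, including through the prefactors $c_\alpha$ and $a_0$, and dominated convergence with the bounds already obtained in Lemma \ref{le:deffp} promotes pointwise analyticity to norm-analyticity. The exceptional values $\alpha \in \{1/2,1\}$ are precisely those at which the Gamma-function prefactors or the integrability thresholds of the $y$- and $r$-integrals become singular; away from these two points the family is analytic.

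With compactness and analyticity in hand, the analytic Fredholm theorem applied to each connected component of $(0,2)\setminus\{1/2,1\}$, namely $(0,1/2)$, $(1/2,1)$ and $(1,2)$, implies that the set of $\alpha$ where $I + K_\alpha$ fails to be an isomorphism is either the whole component or has no accumulation point inside the component. The former alternative is excluded by producing in each component a single $\alpha_\star$ at which $I + K_{\alpha_\star}$ is invertible: a Neumann series argument suffices, exploiting the smallness of the prefactor $c_\alpha / (\alpha a_0^2)$ of $K_\alpha$ at the endpoints of the components (where it is easy to check, using the explicit formulas for $c_\alpha$ and $a_0$, that $\|K_\alpha\|_{\cB(E_\kappa)} < 1$). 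Since any closed $F \subset (0,2)\setminus\{1/2,1\}$ is compact in that open set, discreteness of $\cA$ in each component translates into finiteness of $\cA \cap F$, which is the desired conclusion.
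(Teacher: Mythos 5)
Your strategy --- transport $K_\alpha$ to a fixed Banach space $E_\kappa$, prove compactness and operator-norm analyticity, then invoke the analytic Fredholm theorem --- is \emph{not} what the paper does, and the divergence is substantial. The paper never proves norm-analyticity of $\alpha \mapsto K_\alpha$ on the $C^1$-type space. Instead it introduces a larger operator $H_\alpha$ on the Hilbert space $L^2_\kappa(X)$ whose spectrum contains $\sigma(K_\alpha)$, proves that $H_\alpha^m$ is trace class for a power $m$ depending on which dyadic or triadic strip $\Re(\alpha)$ lies in, and studies the \emph{Fredholm determinant} $\alpha \mapsto \det(I - H_\alpha^m)$ as an analytic scalar function, showing it does not vanish identically by sending $\Im(\alpha) \to \infty$. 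This also explains why $\{1/2,1\}$ are excluded in the statement: they are the boundaries between the dyadic/triadic strips where the required power $m$ changes, not points where the integral formula for $K_\alpha$ becomes singular. Your explanation --- that Gamma prefactors or integrability thresholds of the $y$- and $r$-integrals degenerate at $\alpha \in \{1/2,1\}$ --- is incorrect; the formula \eqref{eq:DF0} defining $K_\alpha$ is perfectly regular and analytic in a full neighborhood of those values. If your route did go through, it would actually prove a strictly stronger theorem with no need to remove $\{1/2,1\}$ at all, which should already make you suspicious.

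The concrete gap is in your final step. You need, in each connected component, one parameter $\alpha_\star$ at which $I + K_{\alpha_\star}$ is invertible, and you propose to get it by a Neumann series, ``exploiting the smallness of the prefactor $c_\alpha/(\alpha a_0^2)$ at the endpoints of the components.'' Compute the prefactor: using $\Gamma(1+\alpha/2)=(\alpha/2)\Gamma(\alpha/2)$ and the reflection formula one finds
$$
\frac{2 c_\alpha}{\alpha a_0^2} = \frac{\alpha \sin(\pi\alpha/2)}{2^{\alpha/2}\,\pi},
$$
which indeed vanishes as $\alpha \to 0^+$ and as $\alpha \to 2^-$, but at $\alpha=1/2$ and $\alpha=1$ it is a fixed positive constant ($\approx 0.1$ and $\approx 0.23$ respectively). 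So in the middle component $(1/2,1)$ neither endpoint gives a small prefactor, and a Neumann series argument there would require an a priori bound on the operator norm of $TMJ$ --- which you do not have, and which one should not expect to be $< 10$. Even near the outer endpoints $\alpha \approx 0$ and $\alpha \approx 2$ the argument is not clean, because the kernel of $T$ degenerates as $\alpha \to 0$ (the singularity $|\psi-\omega|^{\alpha-1}$ becomes non-integrable), so $\|TMJ\|$ may blow up faster than the prefactor decays. The paper circumvents exactly this obstruction by never approaching the real boundary: it sends $\Im(\alpha) \to \infty$ in a strip with $\Re(\alpha)$ fixed, where the Hilbert-Schmidt norm of $\hat{S}^m$ is controlled by its value at real $\Re(\alpha)$ (since the kernel's modulus depends only on $\Re(\alpha)$). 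You would need a replacement for this step, and ``invertibility at a component endpoint'' as stated does not provide one.
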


The proof of Theorem \ref{th:4IFT} will require a careful analysis of the operator $K_\alpha$. We postpone it to the next subsection. We first use it to prove Proposition \ref{fixlem} and Proposition \ref{le:fploccont}.

\begin{proof}[Proof of Proposition \ref{fixlem}.]
Since $z \mapsto R_\star (z)$ is analtyic and $|R_\star(z)| \leq 1/ \Im (z)$, it is easy to check that $z \mapsto \gamma^\star_z$ is continuous from $\dC_+$ to $\cH_{\alpha,\kappa}$ (see fortcoming Lemma \ref{le:gammaholder}). Also, as already pointed, $\gamma^\star_{z}$ converges to  $\gamma^\star_{0}$ as $z$ goes to $0$. Hence, from Lemma \ref{le:Frechetfp}, the definition of $\cA$, and  the Implicit Function Theorem in  Banach spaces (see \cite[Theorem 2.7.2]{MR1850453}), for any $t >0$, there exists $\tau > 0 $ such that for all $z \in \dC_+$, $|z| \leq \tau$, 
$$
\| \gamma^\star_z - \gamma^\star_0 \|_\kappa \leq t.
$$
We take $t = 2^{\alpha/4} a_0 /2$. We deduce that if $|z| \leq \tau$, 
$$
\Gamma( 1 - \alpha /2 ) 2^{\frac \alpha 4} \E (\Im  R_\star (z) )^{\frac \alpha 2}  = \gamma^\star_z  (e^{i \pi /4} ) \geq \gamma^\star_0 ( e^{i \pi/4} ) - t = t.
$$
Hence, with $c = a_0/2$, if $|z| \leq c_0$, we have 
$$
\Gamma( 1 - \alpha /2 )  \E (\Im  R_\star (z) )^{\frac \alpha 2} \geq c. 
$$
Now, from \eqref{eq:RDE}, we have the bound,
$
| R_\star(z) |\leq  1/ \left( \sum_{k \geq 1} \xi_k \Im R_k(z)\right). 
$
Using Lemma \ref{le:fpgamma} and the formula \eqref{eq:feo}, 
we get 
\begin{eqnarray*}
r_{p,z} (\gamma_z^\star) = \E |R_\star(z)|^ p& \leq & \frac 1 {\Gamma(p)} \int x^{p -1} \E e^{ - x \sum_{k} \xi_k \Im (R_k)} dx \\
&  = & \frac {1}{\Gamma (p)} \int x^{p -1}  e^{ - x^{\alpha/2} \Gamma ( 1 - \alpha/2) \E \Im (R_\star)^{\alpha/2} } dx \\
&  \leq & \frac {1}{\Gamma (p)} \int x^{p -1}  e^{ - c x^{\alpha/2}} dx,
\end{eqnarray*}
where at the second line we have used \eqref{eq:LK}.  \end{proof}

\begin{proof}[Proof of Proposition \ref{le:fploccont}.]
From Theorem \ref{th:4IFT}, Lemma \ref{le:deffp}, Lemma \ref{le:Frechetfp}, we may apply the inverse function Theorem to $T : (z,f) \mapsto (T_1(z),T_2(z))=(z, f - G_z (f))$ on $\dC \times \cH^0_{\alpha/2,\kappa}$. It follows that there exists an open neighborhood $U$ of $ (0,\gamma_0^\star)$, such that $T$ is an homeomorphism from $U$ to $V= T (U)$. Moreover, $T_{|U}^{-1}$ has Fr\'echet derivative at $y = (z,g) \in V$ given by $(D T ( T_{|U} ^{-1} y ))^{-1}$. If $\tau$ is taken small enough, $(z,\gamma_z^\star) \in U$ for all $|z| \leq \tau$. Also, since $\gamma_z^\star = G_z (\gamma^\star_z)$, if $\tau$ is taken small enough, we may further assume that $\{z\} \times B( \gamma_z^\star, 2 \tau)\subset U$ and $\{z \} \times B(0, 2\tau) \subset V$, where $B(f,\tau)$ is the open ball with the norm $\| \cdot \|_\kappa$. We then apply the fundamental theorem of calculus to $t \mapsto T_{|U}^{-1} ( z, f_t )$ where $f_t = t (\gamma - G_z(\gamma))$ to deduce
$$\|\gamma_z^\star-\gamma\|_\kappa=\| \PAR{ T^{-1}(z,0)-T^{-1}(z,\gamma -G_z(\gamma)}_2\|_\kappa\le c\|\gamma -G_z(\gamma)\|_\kappa\,.$$
 It implies the Proposition with $c = \sup \| (D T (x) ) ^{-1} \| $, where $\| \cdot \|$ is the operator norm and the supremum is over all $x = (z,f)$, $|z| \leq \tau$,  $f \in B( \gamma_z^\star, \tau)$. 
\end{proof}

\subsection{Proof of Theorem \ref{th:4IFT}}
The strategy of the proof of Theorem \ref{th:4IFT} is quite intricate. The road map is the following:
\begin{enumerate}
\item   Prove that the operator $K_\alpha$ is compact on the Banach  space of continuous functions for the $\|.\|_\kappa$ norm. Hence, $I+K_\alpha$ 
is not an isomorphism iff $-1$ is an eigenvalue of $K_\alpha$.
\item We could not study directly the spectrum of $K_\alpha$ but that of another compact operator  $H_\alpha$, an operator on a larger Hilbert space whose spectrum contains
the spectrum of $K_\alpha$. We prove that $H^m_\alpha$ is trace class for $m$ large enough (depending on $\alpha$). 
\item
To show that $-1$ is not an eigenvalue of $H_\alpha$,  we want toshow that for some large even number $m$, $\det(I-H_\alpha^{m})$ does not vanish
except  possibly on a discrete set of $\alpha$, where $\det(I + \cdot)$ denotes the Fredholm determinant. To this end, we first show that $\alpha\mapsto \det(I-H_\alpha^m)$ is analytic on $\Re(\alpha) \in (0,2)$. 

\item We finally check that $\alpha\mapsto \det(I-H_\alpha^m)$ does not vanish at infinity and concludes the whole argument. 

\end{enumerate}

\paragraph{Step 1 : Compactness. }
We start by the compactness of $K_\alpha=-c_\alpha DF_0(\gamma^\star_0) J$. 

\begin{lem}\label{le:compactness}
For any $\kappa\in (0,1)$,  any $\alpha \in U=\{z\in\mathbb C: |\Re(\alpha)\in (0,2)\}$, the operator $K_\alpha$ is compact in $\cH_{\alpha/2,\kappa}$. 
\end{lem}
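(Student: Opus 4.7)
The plan is to approximate $K_\alpha$ in operator norm by a sequence of compact operators $K_\alpha^{(N)}$, the compactness of each approximant coming from an Arzel\`a--Ascoli argument on the compact sphere $S_1^+$. A first simplification: the involution $u\mapsto \check u = i\bar u$ satisfies $1.\check u = 1.u$ and $i.\check u = -i.u$, so $J$ is an isometry of $\cH_{\alpha/2,\kappa}$, and it suffices to show compactness of the integral operator $L_\alpha := DF_0(\gamma_0^\star)$ given by the explicit formula \eqref{eq:DF0}.

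First I would truncate the kernel: let $L_\alpha^{(N)}$ be obtained from \eqref{eq:DF0} by restricting the integration to $y\in [N^{-1},N]$ and $\theta\in [N^{-1},\pi/2 - N^{-1}]$. The bounds already assembled in the proof of Lemma \ref{le:Frechetfp}---in particular \eqref{lkj} together with the auxiliary integral estimate \eqref{poi}---show that the contribution of the complementary region to both $L_\alpha f(u)$ and $(i.u)^\kappa \partial_\veps L_\alpha f(u)$ is dominated uniformly in $u\in S_1^+$ by $\varepsilon_N\,\|f\|_\kappa$ with $\varepsilon_N \to 0$ as $N\to\infty$. This will give $L_\alpha^{(N)} \to L_\alpha$ in the operator norm on $\cH_{\alpha/2,\kappa}$.

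Next I would establish the compactness of each $L_\alpha^{(N)}$. After truncation, the weights $y^{-\alpha/2-1}$ and $(\sin 2\theta)^{\alpha/2-1}$ are bounded on the integration domain, and the points $e^{i\theta}$, $e^{i\theta}+yu$ stay in a compact subset of $\cK_1^+\setminus\{0\}$ uniformly in $u\in S_1^+$. One can then differentiate the kernel once more in $u$ under the integral sign to obtain that both $L_\alpha^{(N)} f$ and $(i.u)^\kappa \partial_\veps L_\alpha^{(N)} f$ are Lipschitz on $S_1^+$ with constant $C_N\,\|f\|_\kappa$. Arzel\`a--Ascoli, applied coordinate by coordinate on the compact set $S_1^+$, then yields precompactness of the image of the unit ball of $\cH_{\alpha/2,\kappa}$ under $L_\alpha^{(N)}$ in the $\|\cdot\|_\kappa$ topology. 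Since the compact operators form a closed subspace for the operator norm, $L_\alpha$---and therefore $K_\alpha$---is compact.

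The main obstacle is the joint control of the three singularities already present in the analysis of $F_h$: the cancellation structure at $y=0$ responsible for integrability of $y^{-\alpha/2-1}$ against the difference in the integrand, the endpoint singularities of $(\sin 2\theta)^{\alpha/2-1}$, and the $|i.u|^{-\kappa}$ blow-up of the weighted derivative near the zero set of $i.u$ on $S_1^+$. All three are handled by the auxiliary integral $J_{B,\alpha,\kappa}$ of \eqref{poi}, which is precisely where the hypotheses $\kappa<1$ and $\Re(\alpha)\in(0,2)$ enter; the truncation must be arranged so that these bounds apply uniformly in $N$ both to $L_\alpha - L_\alpha^{(N)}$ and to the extra $u$-derivative needed to run Arzel\`a--Ascoli on $L_\alpha^{(N)}$. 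Once this is set up, extension to complex $\alpha$ with $\Re(\alpha)\in(0,2)$ follows from the fact that the integrability thresholds in \eqref{poi} depend only on $\Re(\alpha)$.
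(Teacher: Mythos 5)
Your reduction to $L_\alpha := DF_0(\gamma_0^\star)$ and the plan to approximate in operator norm by truncated operators are both reasonable (and the operator-norm convergence claim, based on \eqref{lkj}–\eqref{poi}, is likely fine). However, the compactness claim for $L_\alpha^{(N)}$ has a genuine gap. You assert that after truncation one can ``differentiate the kernel once more in $u$ under the integral sign'' to see that $(i.u)^\kappa \partial_\veps L_\alpha^{(N)} f$ is Lipschitz on $S_1^+$ with constant $C_N\|f\|_\kappa$. This fails for two distinct reasons. First, the weight $(i.u)^\kappa$ has derivative of order $|i.u|^{\kappa-1}$, so $\partial_{\veps'}\bigl[(i.u)^\kappa \partial_\veps L_\alpha^{(N)} f(u)\bigr]$ contains the term $\kappa\,(i.u)^{\kappa-1}(i.\veps')\,\partial_\veps L_\alpha^{(N)} f(u)$, which blows up as $|i.u|\to 0$ even when $\partial_\veps L_\alpha^{(N)}f$ is bounded; this singularity lives at $\theta_0\in(\delta,\pi/2-\delta)$ and is \emph{not} removed by truncating $\theta$ near the endpoints $0$ and $\pi/2$. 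Second, and more fundamentally, differentiating once more under the integral sign produces terms involving $\partial_{\veps'}\partial_\veps f(e^{i\theta}+yu)$, i.e.\ second derivatives of the argument $f$. But $\|\cdot\|_\kappa$ only controls one (weighted) derivative of $f$, and elements of the unit ball of $\cH_{\alpha/2,\kappa}$ need not be $C^2$; so no $C_N\,\|f\|_\kappa$ bound is available. The unit ball of $\cH_{\alpha/2,\kappa}$ does not come with a modulus of continuity for $\partial_\veps f$, so equicontinuity of $\partial_\veps L_\alpha^{(N)} f$ cannot be extracted directly from this representation.

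The way around this, which is what the paper does, is to change variables so that $P$ (the essential part of $L_\alpha$, see \eqref{eq:defP1}) becomes a genuine kernel operator on $S_1^+$: $P g(e^{i\omega}) = \int_0^{\pi/2} k(\omega,\psi)\, g(e^{i\psi})\,d\psi$, with the kernel $k$ given explicitly by \eqref{eq:Pkernel}. In this form, $P$ acts on the scalar function $g = \partial_\veps f$ evaluated at \emph{fixed} points $e^{i\psi}$, so compactness follows from an $L^1$-continuity criterion on the kernel (continuity of $\omega\mapsto k_\kappa(\omega,\cdot)$ in $L^1$), which requires no differentiability of $g$ whatsoever. Your outline never performs this change of variables, and the compactness of $L_\alpha^{(N)}$ is left unjustified. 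You could salvage your truncation scheme by writing the truncated operator in kernel form and invoking the same $L^1$-continuity criterion, but at that point the truncation becomes superfluous: the criterion applies directly to $L_\alpha$ itself, as in the paper's argument via Scheff\'e's lemma and dominated convergence.
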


\begin{proof}
We introduce the multiplication operator, $M (f)  (u) = f (u)  ( 1 . u  ) ^{-\alpha}$. We notice that $M$ is a bounded operator from $\cH_{\alpha/2,\kappa}$ to $\cH_{-\alpha/2,\kappa}$. We will now prove that the $\cH_{-\alpha/2,\kappa}$ to $\cH_{\alpha/2,\kappa}$ operator 
$$T f (u) = \int_0 ^ {\frac \pi 2}   \hspace{-5pt} d\theta (\sin 2 \theta  )^{\frac \alpha 2 -1}  \int_{0}^\infty  \hspace{-5pt} dy\,  y^{-\frac{\alpha}{2}-1} \left( f(e^{i \theta} )   -  f(e^{i \theta}+ yu )  \right)$$
is  compact. It will conclude the proof since $DF_0 (\gamma^\star_0) =  \left( 2 / (\alpha a_0^2) \right)T M$. Let $B$ be the set of $f \in \cH_{-\alpha/2,\kappa}$ such that $\|f \|_\kappa \leq 1$. We should prove that $T B$ is a compact set of $\cH_{\alpha/2,\kappa}$.  
Note that $M$ is bounded, invertible and  with bounded  inverse as $1.u$ is bounded above and below by a positive number. 
By Lemma \ref{le:Frechetfp}, $T= \left( 2 / (\alpha a_0^2) \right)^{-1} DF_0(\gamma^\star_0) M^{-1}$ 
  is a bounded operator  in $\mathcal H_{\alpha/2,\kappa}$. If $f \in B$ then arguments similar to the proof of Lemma \ref{le:deffp} easily show that $|T f(1) |\leq C \| f \|_\kappa$ is bounded. It is sufficient to prove that there exists a compact set $K$ of $C_\kappa(S_1^+)$ (the space of  continuous functions on $S_1^+$  equipped with  the norm $\| |i.u|^\kappa \cdot \|_\infty$) such that $u \mapsto \partial_\veps (T f)(u) $  is in $K$ for all $f \in B$ and $\veps \in \{1,i\}$.

To this end, we observe that, if $g (u)= \partial_\veps f (u)$, then $\partial_\veps (T f)(u) = - P g  (u)$ where 
\begin{equation}\label{eq:defP1}
P g (u) =   \int_0 ^ {\frac \pi 2}   \hspace{-5pt} d\theta (\sin 2 \theta  )^{\frac \alpha 2 -1}  \int_{0}^\infty  \hspace{-5pt} dy\,  y^{-\frac{\alpha}{2}}   g\left(\frac{ e^ { i \theta} + yu }{| e^ { i \theta} + yu|} \right)  | e^ { i \theta} + yu|^{-1  - \frac \alpha 2} . 
\end{equation}
We should  thus prove that $P$ is a compact operator on $C_\kappa(S_1^+)$.  
This amounts to prove that $P_\kappa(g)(u)= (i.u)^\kappa P((i.*)^{-\kappa} g)(u)$ is compact in $C_0(S_1^+) = C(S_1^+)$.
To this end, we write $P$ as a kernel operator. We assume that $u = e^{i \omega}$. We wish to perform the change of variable from $(\theta,y)$ to $(\theta, \psi)$ defined, for $\theta \ne \omega$, by
\begin{equation*}\label{eq:changevar}
e^{i \psi} = \frac{ e^ { i \theta} + yu }{| e^ { i \theta} + yu|}. 
\end{equation*}
The set $y \in (0,\infty)$ is mapped bijectively to  $\psi \in (\theta, \omega)$ if $\theta < \omega$ or $\psi \in (\omega, \theta)$ if $\theta > \omega$.  Since $| e^{i \theta}  + yu | = e^{ i (\theta - \psi) } + y e^{i (\omega - \psi)}$ is a real number, taking the imaginary part, we arrive at  
$$
y = \frac{ \sin( \theta - \psi) }{\sin ( \psi - \omega)}. 
$$
Taking the real part, we find
$$| e^{i \theta}  + yu | = \cos ( \theta - \psi) + \frac{ \sin( \theta - \psi) \cos ( \omega - \psi) }{\sin ( \psi - \omega)} = \frac{ \sin( \theta -  \omega) }{\sin ( \psi - \omega)}.$$
Also,
\begin{eqnarray*}
\frac{ d y }{d \psi } & =& -  \frac{ \cos( \theta - \psi)\sin( \psi - \omega) + \cos(\psi- \omega) \sin (\theta - \psi) }{\sin^2( \psi - \omega)} =- \frac{\sin( \theta - \omega)}{\sin^2 (\psi - \omega)}.
\end{eqnarray*}
 We may then express $P$ as 
\begin{equation}\label{eq:Pkernel}
P g (e^{i \omega} ) =   \int_0 ^ {\frac \pi 2}   \hspace{-5pt} d\psi   g(e^{i \psi} ) k (\omega, \psi),
\end{equation}
where $k$ is a kernel on $[0,\pi/2]^2$ defined by, for $\psi > \omega$,
$$
k(\omega, \psi)  =  \sin ( \psi - \omega) ^{ \alpha -1}\int_{\psi}^{\frac \pi 2} d \theta   (\sin 2 \theta  )^{\frac \alpha 2 -1}  \sin( \theta - \psi)^{-\frac \alpha 2}  \sin ( \theta - \omega)^{ - \frac \alpha  2}, 
$$
while if $\psi < \omega$, 
$$
k(\omega, \psi)  =  \sin (  \omega - \psi ) ^{ \alpha -1}\int_{0}^{\psi} d \theta   (\sin 2 \theta  )^{\frac \alpha 2 -1}   \sin( \theta - \psi)^{-\frac \alpha 2}  \sin ( \theta - \omega)^{ - \frac \alpha  2}.
$$
This implies that $P_\kappa$ has kernel
$$k_\kappa(\omega,\psi)= (i.e^{i\omega})^\kappa (i.e^{i\psi})^{-\kappa} k(\omega,\psi)\,.$$
With our alternative expression for $P_\kappa$, we can readily prove the compactness on $C(S_1^+)$. From \cite[Theorem 12.1]{MR647629}, it suffices to prove that for any $\omega \in [0,\pi/2]$, if $\omega_n \to \omega$, 
\begin{equation}\label{eq:compactQ}
\int_0^{\pi/2} | k_\kappa ( \omega,\psi) - k_\kappa ( \omega_n,\psi) | d \psi \to 0. 
\end{equation}
If $\alpha \in (0,2)$ is real, then $k_\kappa (\omega, \psi) \geq 0$ and we have that
$$
\int_0^{\pi/2}  | k_\kappa (\omega, \psi) | d \psi = |i.e^{i\omega}|^\kappa \int_0 ^ {\frac \pi 2}   \hspace{-5pt} d\theta (\sin 2 \theta  )^{\frac \alpha 2 -1}  \int_{0}^\infty  \hspace{-5pt} dy\,  y^{-\frac{\alpha}{2}}\frac{ | e^{i \theta} + y e^{i \omega}|^{-1-\frac \alpha 2+\kappa}}{|i.(e^{i \theta} + y e^{i \omega})|^\kappa}
$$
is continuous in $\omega$ by dominated convergence (recall $\kappa<1$ and $|e^{i\theta}+y e^{i\omega}|$ is bounded below). Consequently, from Scheff\'e's Lemma, to prove \eqref{eq:compactQ}, it is sufficient to check that for almost all $\psi \in [0,\pi/2]$, $k ( \omega_n,\psi)$ converges to $k ( \omega,\psi)$. It is however immediate that for any $\psi \ne \omega$, the above convergence holds. It completes the proof in the case $\alpha$ real. In the general case, writing explicitly the dependence of $k$ in $\alpha$, we find $| k^\alpha (\omega, \psi) |  \leq k^{\Re(\alpha)} (\omega,\psi)$.  We conclude similarly by dominated convergence that $\int_0^{\pi/2}  | k^\alpha_\kappa (\omega_n, \psi) | d \psi \to \int_0^{\pi/2}  | k^\alpha _\kappa(\omega, \psi) | d \psi$.  Almost sure convergence of $k^\alpha(\omega_n,\psi)$ is again clear from the formula.
\end{proof}

\paragraph{Step 2 : Trace class for an affiliated operator.  }
The next step would be to compute the spectrum of $K_\alpha$. Unfortunately, we have not been able to compute it explicitly. We  instead first define an operator $H_\alpha$ whose spectrum contains the spectrum of $K_\alpha$. For a suitable even power $m$, we will then prove that the Fredholm determinant of $I - H^m_\alpha$  depends analytically on $\alpha$. To this end, we set $X = S_1^+ \times \{0,1,i\}$ and consider the Hilbert spaces $L_\kappa^2 (S_1^+)$ and $L_\kappa ^2 (X)$ with respective norms
$$
\| g \|_ {L^2_\kappa (S_1^+)} = \sqrt{  \int_0^{\frac \pi 2 } |g( e^{i \psi})|^2  \frac{d \psi}{|i.e^{i\psi}|^\kappa} }  \quad \hbox{ and } \quad \| f \|_ {L^2_\kappa (X)} = \sqrt{ \|f_0\|_{L^2 _0 (S_1^+)}^2 +  \sum_{\veps \in \{1,i\} } \| f_\veps \|^2_{L^2_\kappa (S_1^+)} },
$$
where, for shorter notation, we have set $f_\veps : u \mapsto f(u,\veps)$. 
   We extend a function $f$ in $L_\kappa^2 (X)$ to a function on $ \cK^+_1 \times \{0,1,i\}$, by setting $f ( \lambda u , 0) = \lambda^{\alpha/2} f(u ,0)$, $f( \lambda u , \veps) = \lambda^{\alpha/2 - 1} f( u, \veps)$ for $\veps \in \{1,i\}$.  We define a new operator $S:C_\kappa(X)\to C_\kappa(X)$ by the formula, for $u = u_1 + i u_2$ and $\veps \in \{1,i\}$,  
\begin{eqnarray*}
S f (u , 0)  & = & \frac 2 \alpha  \int_0 ^ {\frac \pi 2}   \hspace{-5pt} d\theta (\sin 2 \theta  )^{\frac \alpha 2 -1}  \int_{0}^\infty \hspace{-5pt} dy \, y^{-\frac{\alpha}{2}} \left( \frac{u_1 f_1(e^{i \theta}+ yu ) + u_2 f_i (e^{i \theta}+ yu ) }{( 1 . ( e^{i \theta} + y u )  ) ^\alpha}   - \alpha \frac{ (1.u) f_0 (e^{i \theta}+ yu ) }{( 1 . ( e^{i \theta} + y u )  ) ^{1+\alpha}}  \right) \\
S f (u , \veps ) & =&  \int_0 ^ {\frac \pi 2}   \hspace{-5pt} d\theta (\sin 2 \theta  )^{\frac \alpha 2 -1}  \int_{0}^\infty  \hspace{-5pt} dy\,  y^{-\frac{\alpha}{2}} \left(   \frac{ f_\veps (e^{i \theta}+ yu ) }{( 1 . ( e^{i \theta} + y u )  ) ^\alpha} - \alpha   \frac{(1.u) f_0 (e^{i \theta}+ yu ) }{( 1 . ( e^{i \theta} + y u )  ) ^{1+\alpha}} \right).
\end{eqnarray*}
Let $P$ be defined by \eqref{eq:defP1}, we observe that in matrix form 
\begin{equation}\label{eq:S2P}
 S = \begin{pmatrix} M_{00} & M_{01} & M_{0i} \\
M_{10} & I & 0 \\
M_{i0} & 0 & I
\end{pmatrix}  \begin{pmatrix}  P & 0 & 0 \\
0 &   P & 0 \\
0 & 0 &  P 
\end{pmatrix} \begin{pmatrix}
N_0 & 0 & 0 \\
0 & N_1 & 0 \\
0 & 0 & N_i
\end{pmatrix}, 
\end{equation}
where $M_{\veps \veps'}$ and $N_\veps$ are the bounded multiplication operators in $L^2 (S_1^+)$, $M_{00} f (u) = - 2 (1.u) f(u)$, $M_{10} f (u) = M_{i0} f (u) = - \alpha (1.u) f(u)$, $M_{01} f (u) = \frac 2 \alpha  u_1 f(u)$,  $M_{0i} f (u) = \frac 2  \alpha u_2 f(u)$, $N_0 f (u) = (1.u)^{-\alpha-1} f(u)$ and $N_1 f (u)  =N_i f (u) =(1.u)^{-\alpha}f(u) $.  
The proof of Lemma \ref{le:compactness} shows that $S$ is a compact operator on $C_\kappa(X)$ since $P$ is compact and the $M_{\veps \veps'}$ and $N_\veps$ are bounded. 
We set 
\begin{equation}\label{eq:defHalpha}
H_\alpha = c'_\alpha S J , 
\end{equation}
where $c'_\alpha = c_\alpha  ( 2 / (\alpha a_0^2) ) $ and the operator $J$ is extended on $L_\kappa^2 (X)$ by setting $(J f )_\veps (u)  = f_\veps ( \check u)$.   If $f \in \cH_{\alpha/2,\kappa}$, we set $\partial_0 f (u)= f$  and $\bar f (u,\veps) = \partial_\veps f (u) \in L_\kappa^2 (X)$. We shall use the identity
\begin{align*}
   & \int_{0}^\infty  \hspace{-5pt} dy\,  y^{-\frac{\alpha}{2}-1} \left( \frac{ f(e^{i \theta} )}{ ( 1 . e^{i \theta} ) ^\alpha}  -   \frac{ f(e^{i \theta}+ yu ) }{( 1 . ( e^{i \theta} + y u )  ) ^\alpha}  \right) 
   =\, - \int_{0}^\infty  \hspace{-5pt} dy\,  y^{-\frac{\alpha}{2}-1} \int_0 ^y \hspace{-5pt} dt \,  \partial_u \left( \frac{ f(v ) }{( 1 . v   ) ^\alpha}  \right)_{v = e^{i\theta} + t u}  \nonumber \\
&\qquad\qquad \qquad \qquad\qquad =  \, - \frac 2 \alpha  \int_{0}^\infty \hspace{-5pt} dt \, t^{-\frac{\alpha}{2}} \left( \frac{ \partial_u f(e^{i \theta}+ tu ) }{( 1 . ( e^{i \theta} + t u )  ) ^\alpha}   - \alpha \frac{(1.u) f(e^{i \theta}+ tu )}{( 1 . ( e^{i \theta} + t u )  ) ^{1+\alpha}}  \right) .\label{eq:kokoko}
\end{align*}
Hence, we have
$$
\partial_\veps \left( K_\alpha f \right) (u) = ( H_\alpha \bar f)_\veps (u).  
$$
In particular, if $f \in \cH_{\alpha/2,\kappa}$ is an eigenvector of $K_\alpha$ then, we have $\bar f \in C_\kappa (X)$ and $\bar f$ is an eigenvector of $H_\alpha$ with the same eigenvalue.  It follows that the spectrum of $K_\alpha$ is included in the spectrum of $H_\alpha$. We may summarize this inclusion as
\begin{equation}\label{eq:inclusionspectrum}
\sigma_{\cH_{\alpha/2,\kappa}} (K_\alpha) \subseteq  \sigma_{C_\kappa (X)} (H_\alpha). 
\end{equation}
Since $C_\kappa(X)\subset L^2_\kappa(X)$,  we can see $H_\alpha$ as an operator on $L^2_\kappa(X)$.
The next lemma is the main technical ingredient for this part of the proof. It proves that if $\Re(\alpha)$ is not equal to $1$ or $1/2$, $H^m_\alpha$ is a trace class operator if $m$ is large enough (depending on $\alpha$).  
\begin{proposition}\label{prop:traceclass}
Let $\kappa\in (0,1)$. For integer $\ell \geq 0$, let $V_\ell = \{ \alpha \in \dC :  2^{-\ell} < \Re (\alpha) < 2^ {-\ell+1}\}$ and $W_\ell=V_\ell / 3 = \{\alpha\in \dC:   2^{-\ell} <3 \Re(\alpha)< 2^{-\ell+1}\}$. If $\alpha \in V_\ell$ then $H^{2^\ell}_\alpha$ is a Hilbert-Schmidt operator in $L^2_\kappa (X)$.  Finally, if $  \alpha \in W_\ell$ with $\ell\ge 1$, then $H_\alpha^{3 \cdot2^{\ell}}$  is a Hilbert-Schmidt operator in $L^2_\kappa (X)$. 

  \end{proposition}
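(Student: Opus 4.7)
The plan is to obtain Hilbert--Schmidt bounds for the required powers of $H_\alpha$ by reducing the problem to iterated compositions of the scalar kernel operator $P$, whose kernel behaviour is explicit.

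Using the block decomposition \eqref{eq:S2P} and \eqref{eq:defHalpha}, we have $H_\alpha = c'_\alpha\, M\, P_{\mathrm{diag}}\, N\, J$ where $M,N$ are $3 \times 3$ blocks of bounded multiplication operators (all entries $(1.u)^{\pm \alpha}, (1.u)^{-\alpha-1}, u_1, u_2$ are uniformly bounded on $S_1^+$ since $1 \leq 1.u \leq \sqrt 2$), $P_{\mathrm{diag}} = \operatorname{diag}(P,P,P)$, and $J$ is a bounded fibrewise involution. Expanding $H_\alpha^m$, each scalar block entry becomes a finite sum of operators of the form $\mu_0 P \mu_1 P \cdots \mu_{m-1} P \mu_m$ with bounded multiplications $\mu_j$ (absorbing the $J$'s, which act fibrewise and so commute with multiplications up to relabeling of the argument). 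Since bounded multiplications preserve the Hilbert--Schmidt class up to constants, it suffices to prove that $P^m$ is Hilbert--Schmidt on $L^2_\kappa(S_1^+)$ for $m = 2^\ell$ when $\alpha \in V_\ell$, and for $m = 3 \cdot 2^\ell$ when $\alpha \in W_\ell$, $\ell \geq 1$.

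From the kernel formula \eqref{eq:Pkernel}, a splitting of the $\theta$-integral near the singularities $\theta = \omega, \psi, 0, \pi/2$, together with the change of variables $\theta = \max(\omega,\psi) + s$, yields the uniform bound
$$|k(\omega, \psi)| \leq C \bigl( |\sin(\omega - \psi)|^{\Re(\alpha) - 1} + 1 \bigr), \qquad \omega, \psi \in [0, \pi/2],$$
when $\Re(\alpha) < 1$; the endpoint factor $(\sin 2\theta)^{\alpha/2-1}$ contributes only an integrable singularity since $\Re(\alpha/2) > 0$, and when $\Re(\alpha) \geq 1$ the diagonal singularity is absent so $k$ is uniformly bounded. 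A standard iteration using the Beta-function identity $\int |x-\eta|^{\beta-1}|\eta-y|^{\beta-1} d\eta \leq C |x-y|^{2\beta-1}$ then shows that the $m$-fold iterated kernel satisfies $|k^{(m)}(\omega,\psi)| \leq C_m (|\omega-\psi|^{m\Re(\alpha)-1} + 1)$ when $m\Re(\alpha) < 1$ and is uniformly bounded when $m\Re(\alpha) > 1$. The Hilbert--Schmidt condition on $L^2_\kappa(S_1^+)$ amounts to finiteness of $\int\!\!\int |k^{(m)}(\omega,\psi)|^2\, d\omega\, d\psi / (|i.e^{i\omega}|^\kappa\, |i.e^{i\psi}|^\kappa)$, which (because $|i.e^{i\psi}|^{-\kappa}$ has only an isolated integrable singularity at $\psi = \pi/4$ when $\kappa < 1$, located away from the diagonal of the other singularity) reduces to $2(m\Re(\alpha) - 1) > -1$, i.e.\ $m\Re(\alpha) > 1/2$. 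Both prescribed choices of $m$ satisfy this strictly: $2^\ell \Re(\alpha) > 1$ for $\alpha \in V_\ell$, and $3 \cdot 2^\ell \Re(\alpha) > 2$ for $\alpha \in W_\ell$.

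The main technical obstacle is the sharp pointwise bound on $k(\omega, \psi)$. The formula for $k$ couples three kinds of singularities: the prefactor $\sin(\psi - \omega)^{\alpha - 1}$, the endpoint behaviour of $(\sin 2\theta)^{\alpha/2 - 1}$, and the zeros of $\sin(\theta - \omega)$ and $\sin(\theta - \psi)$ in the inner integral. Extracting the effective diagonal exponent $\Re(\alpha) - 1$ requires a careful decomposition of the $\theta$-range into the regimes $|\theta - \max(\omega, \psi)| \lesssim |\omega - \psi|$ and $|\theta - \max(\omega,\psi)| \gg |\omega - \psi|$, combined with a case analysis on whether $\Re(\alpha) < 1$ or $\Re(\alpha) \geq 1$ (only the former is relevant for large $\ell$, but the statement covers $\alpha$ in the full strip $V_\ell \cup W_\ell$). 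Once that estimate is in hand, propagating it through $m$ convolutions and accommodating the $|i.e^{i \cdot}|^{-\kappa}$ weight is a routine Beta-function computation, completing the proof.
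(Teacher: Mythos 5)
Your reduction from $H_\alpha^m$ to $P^m$ via \eqref{eq:S2P} and \eqref{eq:defHalpha} is exactly the paper's first move, and your observation about the $\kappa$-weight having an isolated singularity at $\psi=\pi/4$ away from the diagonal is sound. However, your pointwise bound on the kernel $k(\omega,\psi)$ is wrong, and the error propagates through the rest of the argument.

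You claim the endpoint factor $(\sin 2\theta)^{\alpha/2-1}$ ``contributes only an integrable singularity'' and arrive at $|k(\omega,\psi)|\leq C\bigl(|\sin(\omega-\psi)|^{\Re(\alpha)-1}+1\bigr)$. In fact the endpoint singularity couples to the diagonal singularity and produces a genuine extra blowup: the correct bound, as in the paper's \eqref{eq:boundkernel}, is
$|k(\omega,\psi)|\leq C\,|\psi-\omega|^{\Re(\alpha)-1}\bigl(\sin 2\psi\wedge\sin 2\omega\bigr)^{-\Re(\alpha)/2}$ for $0<\Re(\alpha)<1$ (with a corresponding modification for $\Re(\alpha)\geq 1$). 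You can test this directly: take $\beta=\Re(\alpha)=1/2$, $\psi=\omega/2$, $\omega\to 0$. Scaling $\theta=\omega t$ in the $\theta$-integral gives $k(\omega,\omega/2)\asymp \omega^{-3/4}$, not $\omega^{-1/2}$ as your bound would predict. Because this extra factor $(\sin 2\psi\wedge\sin 2\omega)^{-\Re(\alpha)/2}$ accumulates under iteration (the paper tracks it through the auxiliary kernel $\bar k(x,y)=|x-y|^{\beta-1}(x\wedge y)^{-\beta/2}$ and shows it grows to $(x\wedge y)^{-2^{\ell-2}\beta}$ after $\ell$ iterations), both the upper and lower endpoints of $V_\ell$ end up mattering: the lower bound $\Re(\alpha)>2^{-\ell}$ is needed to kill the diagonal singularity after $2^\ell$ convolutions, while the upper bound $\Re(\alpha)<2^{-\ell+1}$ is needed so the accumulated endpoint singularity remains square-integrable. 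Your proposed criterion ``$m\Re(\alpha)>1/2$'' only reflects the diagonal term and would give a wrong (too weak) choice of power $m$; with the correct bound, the iterated kernel $\bar k_{\ell+1}$ is not uniformly bounded as you claim, but instead $\lesssim x^{-2^{\ell-2}\beta}y^{-2^{\ell-2}\beta}$, and that decaying-but-unbounded profile is precisely why both inequalities defining $V_\ell$ (and the separate $W_\ell$ case, handled by starting from $P^3$) are used in the paper. To repair your argument you would need to carry the endpoint factor through the Beta-function recursion, essentially reproducing the paper's estimate \eqref{eq:I4rec}.
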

\begin{proof}
Let $P$ be defined by \eqref{eq:defP1} and set  $(Q f ) ( u , \veps) = P ( f_\veps ) (u)$. From \eqref{eq:S2P}, and since the $M_{\veps \veps'}$ and the $N_\veps$ are uniformly bounded, the Hilbert Schmidt norm
of $H_\alpha^m$ in $L^2_\kappa(X)$ is always bounded by the Hilbert-Schmidt norm of $P^m$ in $L_\kappa^2(S_1^+)$.
We deduce that it is sufficient to prove that $P^{m}$ is Hilbert-Schmidt on $L^2_\kappa (S_1^+)$.

\noindent{\em Bound on the kernel of $P$. } It is simpler to work with $P$ in  its kernel form \eqref{eq:Pkernel}. We claim that,
\begin{equation}\label{eq:boundkernel}
\left| k( \omega, \psi) \right|  \leq \left\{ \begin{array}{ll}   C  | \psi - \omega |^{\Re (\alpha) - 1} \left( \sin ( 2 \psi) \wedge \sin ( 2 \omega) \right)^{-\Re(\alpha) /2} & \hbox{ if $0 < \Re (\alpha) < 1$}, \\
  C \left( \sin ( 2 \psi) \wedge \sin ( 2 \omega) \right)^{-1/2}  \left(1 \vee     \ln \left(\frac{\sin ( 2 \psi)} { \left| \omega - \psi \right|} \right)  \right)  & \hbox{ if $  \Re (\alpha) = 1$}, \\
C  \left( \sin ( 2 \psi) \wedge \sin ( 2 \omega) \right) ^{\Re(\alpha) /2 -1} & \hbox{ if $1 < \Re (\alpha) < 2$}, \end{array}\right.
\end{equation}
where the constant $C$ is uniform over all $\Re(\alpha)$ in a closed set of $(0,2)$.  Indeed, let us assume for example that $\psi < \omega$ and prove that \eqref{eq:boundkernel} holds. If  $\beta = \Re (\alpha)$, since $\sin (x) \geq 2 x / \pi$ for $x \in [0,\pi/2]$, we get 
\begin{equation}\label{eq:boundkernelk}
\left| k( \omega, \psi) \right| \leq C |\psi - \omega|^{\beta -1 } \int_0^{\psi} \left( \theta \wedge \left(\frac \pi 2 - \theta\right) \right)  ^{\frac \beta  2 - 1} |\theta - \psi|^{-\frac \beta  2} |\theta - \omega|^{- \frac \beta  2} d \theta. 
\end{equation}
Now, for $0 < a < b$, if $0 < \beta < 1$, 
\begin{eqnarray*}
I = \int_0 ^a x^{\frac \beta 2-1} |x - a|^{- \frac \beta  2 } |x - b |^{- \frac \beta 2} dx  & =&  a^{- \frac \beta 2} \int_0 ^1 x^{\frac \beta 2-1} |x - 1|^{- \frac \beta 2 } |x - b/a |^{-\frac  \beta 2} dx   \leq  C   b^{-\frac \beta 2}. 
\end{eqnarray*}
If $1 \leq \beta < 2$, then a singularity appears when $b/a$ is close to $1$. 
\begin{eqnarray*}
I & \leq &  C a^{- \frac \beta 2}  \int_0 ^1 |x - 1|^{- \frac \beta 2 } |x - b/a |^{-\frac  \beta 2} dx \\
&=  & C a^{- \frac \beta 2}  \int_0 ^1 x^{- \frac \beta 2 } (x + (b/a -1))^{-\frac  \beta 2} dx \\
& \leq & C a^{-\frac \beta 2} (b/a -1)^{-\beta + 1} \int_0^{(b/a -1)^{-1}} x^{- \frac \beta 2 } (x + 1)^{-\frac  \beta 2} dx. 
\end{eqnarray*}
Since $\sin (x) \leq x$, we deduce from \eqref{eq:boundkernelk} that \eqref{eq:boundkernel} holds if $\psi \leq \pi/4, \psi<\omega$. If $\pi / 4 < \psi < \omega < \pi /2$ then, we should upper bound for $0 < a < b < 1$, 
\begin{eqnarray*}
II = \int_0 ^a |1 - x|^{\frac \beta 2-1} |a -  x| ^{- \frac \beta  2 } | b - x |^{- \frac \beta 2} dx  & =&   \int_0 ^a  (x + ( 1 - a ) ) ^{\frac \beta 2-1} x^{- \frac \beta 2 } (x + b - a )^{-\frac  \beta 2} dx. 
\end{eqnarray*}
Now, with $s = b - a$ and $t = 1 - a > s$,
\begin{eqnarray*}
  \int_0 ^1  x^{- \frac \beta 2 }  (x + t ) ^{\frac \beta 2-1} (x + s )^{-\frac  \beta 2} dx &=& s^{-\beta/2} \int_0 ^{1/s}   x^{- \frac \beta 2 }  (x + t/s ) ^{\frac \beta 2-1} (x + 1 )^{-\frac  \beta 2} dx \\
& \leq & C s t^{-\beta/2}  +  s^{-\beta/2}   \int_1 ^{\infty}   x^{-  \beta  }  (x + t/s ) ^{\frac \beta 2-1} dx \\
& \leq & C s t^{-\beta/2}  +  t^{-\beta/2}   \int_{s/t} ^{\infty}   x^{-  \beta  }  (x + 1 ) ^{\frac \beta 2-1} dx.
\end{eqnarray*}
 If $0 < \beta < 1$, the above expression is $O ( t^{-\beta /2})$. If $\beta > 1$, it is of order $O(s^{1- \beta} t^{\beta/2 -1})$. Finally, if $\beta = 1$, it is $O ( t^{-1/2} ( 1 \vee \log (t / s))) $. It completes the proof of \eqref{eq:boundkernel}.


\noindent{\em Case $\Re(\alpha) > 1$. } We have that, if  $\delta, \kappa\in [0,1)$,
\begin{align}
\int_{[0,1]^2}\frac{ dx}{|x-1/2|^\kappa} \frac{dy}{|y-1/2|^\kappa} \,  (x \wedge y ) ^{- \delta }     \label{eq:HSlast} =2\int_{0}^1\frac{dy}{|x-1/2|^\kappa} \,    \int_0^y  \frac{dx}{|x-1/2|^\kappa}  x^{ -\delta}        < \infty  .
\end{align}
In particular, if $\beta = \Re(\alpha) > 1$, then, we deduce from \eqref{eq:boundkernel} applied to $\delta = 2 - \beta$  and the fact that 
 if  $u = e^{i \theta}$, $\theta \in (0,\pi/2)$,  
 \begin{equation}\label{jhk} |u - e^{i\pi/4}| \leq | \theta - \pi /4| \leq (\pi  / 2 ) | \sin ( \theta - \pi/4) |  =  (\pi  / (2\sqrt 2) ) |i . u|\,,\end{equation}
 that
$$
\int_{[0,\pi/2]^2} \left| k( \omega, \psi) \right|^2 \frac{d \omega}{|i.e^{i\omega}|^\kappa} \frac{d \psi}{|i.e^{i\psi}|^\kappa}
  \leq C    \left( \int_{[0,\frac{\pi}{2}]^2}   (\psi \wedge \omega ) ^{\beta -2 } \frac{d\omega}{|\omega-\frac{\pi}{4}|^\kappa} \frac{d\psi}{|\psi-\frac{\pi}{4}|^\kappa}  \right) ^2 
$$
is finite for $\beta>1$ and $\kappa<1$.
Hence $P$ and $H_\alpha$ are Hilbert-Schmidt operators in $L^2_\kappa(X)  $.

\noindent{\em Case $0 < \Re(\alpha) < 1$. } If $\beta = \Re(\alpha) < 1$, then, we need to take powers of $P$, to obtain a kernel operator in the Hilbert-Schmidt class. Let $\ell \geq 1$, we assume that 
\begin{equation}\label{eq:betaint}
2^{-\ell} < \beta < 2^{-\ell+1}.
\end{equation}
By recursion, we will upper bound the kernel of $P^{2^t}$  and show that $P^{2^{\ell}}$ is Hilbert-Schmidt in $L^2_\kappa(S_1^+)$.  To perform that, we will use the following inequality in the recursion. Let  $0 < \zeta ,\zeta' < 1$ and $0 < a < b$, we set
\begin{eqnarray}
I_{\zeta,\zeta'} (a,b) &=&  \int_0 ^1 |x - a |^{\zeta-1} | x - b |^{\zeta'-1} (x \wedge a) ^{-\zeta/2 } (x \wedge b)^{-\zeta'/2} dx \nonumber\\
&\leq&  \left\{ \begin{array}{ll}   C  a^ {-(\zeta+\zeta')/2} (b - a)^{  \zeta+\zeta' -1} & \hbox{ if $0 <  \zeta+\zeta' < 1$}, \\
  C  a^{ -  \zeta/2} b^{-\zeta'/2} & \hbox{ if $ 1 < \zeta+ \zeta'$}.
\end{array}\right.\label{eq:I4rec} 
\end{eqnarray}
and $I_\zeta=I_{\zeta,\zeta}$. Indeed, as in \eqref{eq:HSlast}, we decompose the integration interval over $[0,a]$, $[a,b]$ and $[b,1]$. It follows easily from the scaling arguments below \eqref{eq:boundkernelk} that the integral over $[0,a]$ is dominating if $2 \zeta < 1$ and the integral over $[b,1]$ is dominating if $2 \zeta >1$. 

From \eqref{eq:boundkernel}, up to a change a variable $x = \sin(\omega)$, it suffices to prove that the kernel operator  $\bar P$ in $L^2 ([0,1])$, 
$$
\bar k (x,y) = |x - y|^{\beta - 1}  ( x \wedge y)  ^{- \beta / 2} 
$$
satisfies that $\bar P^{2^{\ell}}$ is Hilbert-Schmidt in $L^2_\kappa ([0,1])=\BRA{f:\int_0^1|f(x)|^2 |x-\frac{1}{2}|^{-\kappa} dx<\infty}$.   
For $t \geq 1$, we set $\zeta_{t} = 2^{t-1} \beta$. If $\bar k_t$ is the kernel of $\bar P^{2^{t-1}}$, we have,
$$
\bar k_2 (x,y) \leq I_{ \zeta_1} ( x, y). 
$$
 Now,  if $\beta$ is an in \eqref{eq:betaint} with $\ell = 1$, then $2 \zeta_1  = 2 \beta > 1$ and from \eqref{eq:I4rec}, $\bar P^2$ is Hilbert-Schmidt since $\bar k_2 (x,y) \leq C  x^{-\beta/2} y ^{-\beta/2} $ and $ \beta < 1 $. If $\ell \geq 2$, we find from \eqref{eq:I4rec},
$$
\bar k_2 (x,y) \leq C  |x - y|^{\zeta_2 - 1} (x \vee y) ^{-\zeta_2/2}, 
$$
and $$
\bar k_3 (x,y) \leq  C    I_{ \zeta_2} ( x, y) . 
$$
We deduce easily by recursion that for some new constant $C > 0$, 
$$
\bar k_{\ell} (x,y) \leq C |x - y |^{\zeta_{\ell} -1} (x \wedge y )  ^{-\zeta_{\ell}/2}.  
$$
and 
\begin{equation}\label{eq:km+1}
\bar k_{\ell+1} (x,y) \leq  C     I_{ \zeta_{\ell}} ( x, y)  \leq C'  x^{-\zeta_{\ell}/2}  y^{-\zeta_{\ell}/2}  . 
\end{equation}
Now, since $\zeta_{\ell}/2 = 2^{\ell-2}\beta $ and  $2^{\ell-1} \beta < 1$, the operator $\bar P^{2^{\ell}}$ is Hilbert-Schmidt in $L^2_\kappa (S_1^+)$. It proves the first statement of the proposition.

For the second statement,  we proceed similarly but starting with the kernel $P^3$ instead of $P^2$. First, if $2\beta<1$ but $3\beta>1$, from \eqref{eq:I4rec} the kernel of $P^3$, which is bounded by $I_{\beta,2\beta}$, is in $L^2_\kappa(S_1^+)$.
 For $\beta\in (3^{-1} 2^{-\ell},3^{-1} 2^{-\ell+1}), \ell\ge 1$, we can proceed by recursion as above. \end{proof}


We recall that if an operator is Hilbert-Schmidt, its square is trace class. Hence, Proposition \ref{prop:traceclass} implies that 
\begin{cor}\label{cor:traceclass} Let $\kappa\in (0,1)$. Let
 $V=\{\alpha\in\mathbb C: \Re(\alpha)\in (0,2)\backslash\{1/2,1\}\}$, there exists  $m\in\mathbb N$ finite such that   $H_\alpha^m$ is trace class 
in the Hilbert space $L^2_\kappa( X)$. More precisely, let $V_\ell$ and $W_\ell$ be as in Proposition \ref{prop:traceclass}. If $\alpha \in V_\ell$, $\ell \geq 0$, $H_\alpha^{2^{\ell+1}}$ is trace class and if $\alpha \in W_\ell$, $\ell \geq 1$, $H_\alpha^{3 \cdot 2^{\ell+1}}$ is trace class. 
\end{cor}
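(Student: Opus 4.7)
The plan is to show that Corollary \ref{cor:traceclass} is essentially a direct consequence of Proposition \ref{prop:traceclass} together with the elementary fact that the square of a Hilbert--Schmidt operator on a separable Hilbert space is trace class. The only non-automatic point is to verify that the sets $V_\ell$ and $W_\ell$ (for varying $\ell$) cover all of $V=\{\alpha:\Re(\alpha)\in(0,2)\setminus\{1/2,1\}\}$, so that every $\alpha\in V$ falls under the hypothesis of Proposition \ref{prop:traceclass} in one of its two forms.

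First, I would recall the standard ideal-theoretic fact: if $T\in\mathcal{B}(\mathcal{H})$ is Hilbert--Schmidt, then $T^{2}$ is trace class with $\|T^{2}\|_{1}\leq \|T\|_{2}^{2}$. In particular, if $H_\alpha^{k}$ is Hilbert--Schmidt on $L^2_\kappa(X)$ for some integer $k$, then $H_\alpha^{2k}$ is trace class. Applied to Proposition \ref{prop:traceclass}, this immediately yields the two quantitative statements: for $\alpha\in V_\ell$ with $\ell\geq 0$, taking $k=2^\ell$ gives $H_\alpha^{2^{\ell+1}}$ trace class; for $\alpha\in W_\ell$ with $\ell\geq 1$, taking $k=3\cdot 2^\ell$ gives $H_\alpha^{3\cdot 2^{\ell+1}}$ trace class.

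Next, I would check the covering claim. The union $\bigcup_{\ell\geq 0} V_\ell$ covers exactly those $\alpha$ with $\Re(\alpha)\in(0,2)\setminus\{2^{-\ell}:\ell\geq -1\}$, that is, it misses only $\Re(\alpha)\in\{1,1/2,1/4,1/8,\ldots\}$. After removing the excluded values $\{1/2,1\}$, the only $\alpha\in V$ not yet covered satisfy $\Re(\alpha)=2^{-k}$ for some integer $k\geq 2$. For such $\alpha$, I would verify that $3\Re(\alpha)=3\cdot 2^{-k}\in(2^{-(k-1)},2^{-(k-2)})$, so that $\alpha\in W_{k-1}$ with $k-1\geq 1$. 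Hence every $\alpha\in V$ belongs to some $V_\ell$ or some $W_\ell$, and the corresponding power of $H_\alpha$ is trace class by the first step.

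There is essentially no obstacle: once Proposition \ref{prop:traceclass} is in hand, the corollary is a bookkeeping statement combining the Hilbert--Schmidt-squared-is-trace-class fact with a short arithmetic check on the intervals $(2^{-\ell},2^{-\ell+1})$ and $(2^{-\ell}/3,2^{-\ell+1}/3)$. The only substantive content is the observation that the $W_\ell$'s were introduced precisely to recover the dyadic points $\{1/2^k : k\geq 2\}$ that the $V_\ell$'s miss, which is what makes the corollary non-vacuous on all of $V$.
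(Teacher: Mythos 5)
Your proposal is correct and matches the paper's argument exactly: the paper simply invokes that the square of a Hilbert--Schmidt operator is trace class and applies Proposition~\ref{prop:traceclass}. You additionally make explicit the covering check that every $\alpha\in V$ lies in some $V_\ell$ or $W_\ell$ (the $W_\ell$'s catching the dyadic points $2^{-k}$, $k\geq 2$, missed by the $V_\ell$'s), which the paper leaves implicit but which is verified correctly here.
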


\paragraph{Step 3 : Analyticity of the Fredholm determinant.  }

From Corollary \ref{cor:traceclass}, the Fredholm determinant of $H_\alpha^m$ is then properly defined for $\alpha \in V$ and $m$ large enough.  We now want to justify that $\alpha \mapsto \det ( I -  H_\alpha^m)$ is analytic on $V$. Consider a trace class operator $Q$ in $L_\kappa^2(X)$, written for some measurable kernel $q : X^2 \to \C$, 
$$
Q f (x) = \int_{X} q (x,y) f (y) d \rho(y),
$$
where $\rho$ is the underlying measure of our $L^2$ space, that is  $L_\kappa^2 (X) = L^2( X,  \rho)$ with
$$\int f d \rho  =\int_{[0,\pi/2]} f_0 (e^{i \psi} ) d \psi + \sum_{\veps \in \{1, i\}} \int_{[0,\pi/2]} f_\veps (e^{i \psi} ) \frac{d \psi}{|i.e^{i\psi}|^\kappa}\,.$$

There is a specific choice of $q$ which is especially relevant. If $x = (e^{i \omega},\veps)$ and $y = (e^{i \psi}, \delta)$, $0 < \omega, \psi < \pi/2$, we consider the average value around $(x,y)$, for $r >0$, 
$$ q (x,y,r) = \frac {1}{r^2} \int_{C_r}  q((e^{i \omega},\veps),(e^{i \psi},\delta)) d \psi d \omega, $$
 with $C_r = \left( [\omega-r/2,\omega+r/2] \times [\psi - r/2, \psi + r/2] \right) \cap [0,\pi/2]^2$.
We  define the Lebesgue value of $q$ as $\tilde q(x,y) = \lim_{r \downarrow 0} q(x,y,r) $.  From Lebesgue's Theorem, $\rho \otimes \rho$-a.e., $\tilde q (x,y) = q(x,y)$.  From Brislawn's Theorem \cite{MR929421},  $\tilde q(x,x)$ exists $\rho$-a.e. and we have $\Tr (Q) = \int_X \tilde q (x,x) d \rho(x)$.  As a consequence, Simon \cite[Theorem 3.7]{MR2154153} implies
\begin{equation}\label{eq:fredbris}
\det ( I + Q) = \sum_{n =0}^\infty \frac 1 {n !}\int_{X^n} \det \left( \tilde q( x_i, x_j) \right)_{1 \leq i , j \leq n} \prod_{i=1}^n d \rho (x_i).   
\end{equation}
This representation of the Fredholm determinant will be used in the proof of the following lemma. 
\begin{lem}\label{le:detholo}
Let $f \in L^ 2(X,\rho)$, $\Omega$ be an open connected set of $\dC$ and for $ z \in V$, let $Q_z$ be a trace class operator in $L^2(X,\rho)$ with a measurable kernel $q_z(x,y)$.  Assume that $\rho\otimes \rho$-a.e.  (i) for all $z \in \Omega$, $|q_z (x,y)| \leq f(x) f(y)$  and (ii) $z \mapsto q_z(x,y)$ is analytic on $\Omega$. Then 
$z \mapsto \det ( I + Q_z)$ is analytic on $\Omega$. 
\end{lem}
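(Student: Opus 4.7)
The plan is to exploit the Simon/Brislawn series representation \eqref{eq:fredbris} and establish (a) normal convergence of the series uniformly on compact subsets of $\Omega$ and (b) analyticity of each individual term in $z$, so that the Weierstrass convergence theorem gives analyticity of the sum $\det(I+Q_z)$.

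For step (a), I would apply Hadamard's inequality together with the pointwise domination $|q_z(x,y)|\le f(x)f(y)$. Writing $\tilde q_z(x_i,x_j)=f(x_i)f(x_j)\,b_z(x_i,x_j)$ with $|b_z|\le 1$ $\rho\otimes\rho$-a.e., Hadamard gives
$$
\left|\det(\tilde q_z(x_i,x_j))_{1\le i,j\le n}\right|\le n^{n/2}\prod_{i=1}^n f(x_i)^2
$$
for $\rho^n$-a.e.\ $(x_1,\ldots,x_n)$, uniformly in $z\in\Omega$. Consequently the $n$-th term in the series is dominated by $n^{n/2}\|f\|_{L^2(\rho)}^{2n}/n!$, which by Stirling's formula is summable; the convergence of $\sum_n T_n(z)$, with $T_n(z)=\tfrac{1}{n!}\int_{X^n}\det(\tilde q_z(x_i,x_j))\,d\rho^n$, is therefore uniform on $\Omega$.

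For step (b), I would prove analyticity of each $T_n$ on $\Omega$ by Morera's theorem combined with Fubini. Given any closed triangle $T\subset\Omega$, the Hadamard bound justifies exchanging contour and $\rho^n$ integrals:
$$
\oint_T T_n(z)\,dz=\frac{1}{n!}\int_{X^n}\oint_T \det(\tilde q_z(x_i,x_j))\,dz\,d\rho^n(x).
$$
For $\rho^n$-a.e.\ $x=(x_1,\ldots,x_n)$, the off-diagonal entries satisfy $\tilde q_z(x_i,x_j)=q_z(x_i,x_j)$ (both sides agreeing outside a $\rho\otimes\rho$-null set, which pulls back to a $\rho^n$-null set in $X^n$ when $i\ne j$), so $z\mapsto \tilde q_z(x_i,x_j)$ is analytic by assumption~(ii). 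Since the determinant is a polynomial in the entries, the inner contour integral vanishes for a.e.\ $x$, and Morera gives the analyticity of $T_n$. Uniform convergence on $\Omega$ then transfers analyticity to the limit, proving the lemma.

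The main subtlety I expect is handling the diagonal entries $\tilde q_z(x_i,x_i)$: these live on a $\rho\otimes\rho$-null set and are defined via the Brislawn averaging $\tilde q_z(x,x)=\lim_{r\downarrow 0}q_z(x,x,r)$. For each $r>0$ the function $z\mapsto q_z(x,x,r)$ is analytic by Fubini applied to the average (the inner integral $\int_{C_r}f\otimes f\,d\rho^{\otimes 2}$ is finite since $f\in L^2$), and the analyticity of $z\mapsto\tilde q_z(x,x)$ for $\rho$-a.e.\ $x$ follows from a normal-families/Vitali argument using the uniform domination $|q_z|\le f\otimes f$. Once this measure-theoretic point is settled, the Morera/Fubini argument above goes through without modification.
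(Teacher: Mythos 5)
Your overall strategy coincides with the paper's: the Simon--Brislawn series representation \eqref{eq:fredbris}, the factorization $\tilde q_z(x,y)=f(x)f(y)p_z(x,y)$ with $|p_z|\le 1$, Hadamard's bound $|\det(p_z(x_i,x_j))|\le n^{n/2}$ giving normal convergence of the series, and termwise analyticity. The choice of Morera/Fubini instead of the paper's Lemma~\ref{le:intholo}, and of Weierstrass instead of Vitali for the sum, are interchangeable routine variants.

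There is, however, a gap in the off-diagonal step as you state it. You assert that since $\tilde q_z(x_i,x_j)=q_z(x_i,x_j)$ outside a $\rho\otimes\rho$-null set, analyticity of $z\mapsto\tilde q_z(x_i,x_j)$ follows from assumption~(ii). But the Lebesgue set where $\tilde q_z=q_z$ is a null set \emph{depending on} $z$; the hypothesis only guarantees equality for \emph{each fixed} $z$, so one cannot pass directly to ``for a.e.\ $x$, the map $z\mapsto\tilde q_z(x_i,x_j)$ is analytic on all of $\Omega$.'' The correct repair is the same normal-families/Vitali argument you invoke for the diagonal, applied uniformly to all entries (this is what the paper does): show $z\mapsto q_z(x,y,r)$ is analytic for each $r>0$ and locally bounded by $f(x)f(y)$, use that for each fixed $z$ in a countable dense set of $\Omega$ the averages $q_z(x,y,r)$ converge to $\tilde q_z(x,y)$ outside a fixed null set, and conclude by Vitali that for $\rho\otimes\rho$-a.e.\ $(x,y)$ the family converges locally uniformly to an analytic limit equal to $\tilde q_z(x,y)$. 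This single argument subsumes both the diagonal and off-diagonal cases and removes the $z$-dependence problem.
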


We will use repeatedly the following elementary consequence of Cauchy's formula and Lebesgue dominated convergence. 
\begin{lem}\label{le:intholo}
Let $\Omega$ be an open set of $\dC$ and for each $z \in \Omega$, let $x \mapsto f(x,z)$ be a measurable function in a measure space $(X,\rho)$.  Assume that there exists $g \in L^1 (X,\rho)$ such that $\rho$-a.e. (i) for all $z \in \Omega$,  $|f(x,z)| \leq g(x)$ and (ii) $z \mapsto f(x,z)$ is analytic on $\Omega$. Then $z \mapsto \int_X f (x,z) d\rho (x)$ is analytic on $\Omega$. 
\end{lem}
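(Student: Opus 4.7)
The plan is to reduce the statement to a standard two-step argument: first establish continuity of $F(z) := \int_X f(x,z)\, d\rho(x)$ on $\Omega$, and then use Morera's theorem to upgrade continuity to analyticity. Continuity is immediate from hypothesis (i)–(ii): fix $z_0 \in \Omega$ and any sequence $z_n \to z_0$; then $f(x,z_n) \to f(x,z_0)$ pointwise $\rho$-a.e.\ by (ii) (analyticity implies continuity in $z$), and $|f(x,z_n)| \leq g(x)$ with $g \in L^1(X,\rho)$ by (i), so Lebesgue's dominated convergence theorem yields $F(z_n) \to F(z_0)$.

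For analyticity, I would invoke Morera's theorem. Pick any closed triangle $T$ whose interior and boundary lie in $\Omega$. Since $F$ is continuous on $\Omega$, it suffices to show $\oint_{\partial T} F(z)\, dz = 0$. Writing
\[
\oint_{\partial T} F(z)\, dz = \oint_{\partial T} \PAR{ \int_X f(x,z)\, d\rho(x) } dz,
\]
I would apply Fubini's theorem to exchange the two integrations: this is justified because $|f(x,z)| \leq g(x)$ is integrable against $d\rho(x) \otimes |dz|$ on $X \times \partial T$ (the boundary of $T$ has finite length and $g$ is in $L^1(\rho)$). After swapping, the inner integral becomes $\oint_{\partial T} f(x,z)\, dz$, which vanishes $\rho$-a.e.\ by Cauchy's theorem applied to the holomorphic function $z \mapsto f(x,z)$ on $\Omega$ given by hypothesis (ii). Hence $\oint_{\partial T} F(z)\, dz = 0$, and Morera's theorem gives analyticity of $F$ on $\Omega$.

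A cleaner alternative (if one prefers to avoid Morera and work directly with power series) would be to fix $z_0 \in \Omega$, choose $r>0$ with $\overline{D(z_0,r)} \subset \Omega$, and use Cauchy's integral formula pointwise: for $z \in D(z_0,r)$,
\[
f(x,z) = \frac{1}{2\pi i} \oint_{|w - z_0| = r} \frac{f(x,w)}{w - z}\, dw,
\]
then integrate over $x$, swap with Fubini (again justified by the bound $|f(x,w)| \leq g(x)$), and recognize that $F$ itself satisfies Cauchy's formula with a continuous integrand, hence is holomorphic on $D(z_0,r)$.

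There is no real obstacle here; the only subtle point is checking the hypotheses of Fubini, which rests entirely on the uniform domination $|f(x,z)| \leq g(x)$ together with the finite length of the contour, and this is precisely why hypothesis (i) is assumed.
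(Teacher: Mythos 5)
Your proposal is correct and matches the paper's intent exactly: the paper gives no proof at all, simply declaring the lemma to be ``an elementary consequence of Cauchy's formula and Lebesgue dominated convergence,'' which is precisely your second (Cauchy integral formula plus Fubini) argument, while your Morera variant is an equally standard equivalent. The only point worth noting explicitly (which you gloss over) is that applying Fubini requires joint measurability of $(x,z)\mapsto f(x,z)$ on $X\times\partial T$; this follows from the Carath\'eodory-type observation that $f$ is measurable in $x$ for each fixed $z$ and continuous in $z$ for $\rho$-a.e.\ $x$, so the argument is complete.
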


\begin{proof}[Proof of Lemma \ref{le:detholo}]
Since $f \in L^ 2(X,\rho)$, it also belongs to $L^1 (X,\rho)$. From (i)-(ii) and Lemma \ref{le:intholo} for any $r >0$, $\rho\otimes \rho$-a.e., $z \mapsto q_z(x,y,r)$ is analytic on $\Omega$. From Vitali's convergence theorem, it follows that, $\rho\otimes \rho$-a.e., $z \mapsto \tilde q_z(x,y)$  is analytic on $\Omega$. 

Now, we let $p_z(x,y) = \tilde q_z (x,y) / ( f(x) f(y))$. By assumption (i), $\rho\otimes \rho$-a.e., for all $z \in \dC$  $| p_z(x,y)| \leq 1$. From the multi-linearity of the determinant, 
 $$
D_z (x_1, \ldots, x_n) =  \det \left( \tilde q_z( x_i, x_j) \right)_{1 \leq i , j \leq n} =  \det \left( p_z( x_i, x_j) \right)_{1 \leq i , j \leq n}\prod_{i=1}^n  f(x_i)^2 . 
$$
However, from Hadamard's inequality, $\rho^{\otimes n}$-a.e.,
$$
\left| \det \left( p_z( x_i, x_j) \right)_{1 \leq i , j \leq n}\right| \leq n^{n/2}. 
$$
Hence, $\rho^{\otimes n}$-a.e., for all $z \in \Omega$, $|D_z (x_1, \ldots, x_n)| \leq n^{n/2} \prod_{i=1}^n  f(x_i)^2$. From (ii), $\rho^{\otimes n}$-a.e., $z \mapsto D_z (x_1, \ldots, x_n)$ is analytic. We deduce by a new application of Lemma \ref{le:intholo} that 
$$
z \mapsto \frac 1 {n!} \int_{X^n}  D_z (x_1, \ldots, x_n)  \prod_{i=1}^n  d \rho (x_i)
$$
is analytic on $\Omega$ and bounded by $\| f \|_{L^2 (X)}^{2n} n^{n/2}/ n! \leq n^{-n/2} C^n$, with $C = e \| f \|_{L^2 (X)}^2$. The series $\sum_n n^{-n/2} C^n$ being convergent, a new application of Vitali's convergence theorem proves  the analycity on $\Omega$ of 
$$
z \mapsto \sum_{n =0}^\infty \frac 1 {n !}\int_{X^n} D_z (x_1, \ldots, x_n)  \prod_{i=1}^n d \rho (x_i).   
$$
It remains to use \eqref{eq:fredbris}.   
\end{proof}

\begin{proposition}\label{prop:holodetH} For integer $\ell \geq 0$, let $V_\ell$ and $W_\ell$ be as in Proposition \ref{prop:traceclass}. The function $\alpha \mapsto  \det( I - H^{2^{\ell+1}}_\alpha)$ is analytic on $V_\ell$.  
For $\ell \geq 1$, $\alpha \mapsto  \det( I - H^{3\cdot 2^{\ell+1}}_\alpha)$ is analytic on $W_\ell$.  
\end{proposition}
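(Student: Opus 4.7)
The plan is to apply Lemma \ref{le:detholo} to $Q_\alpha = -H_\alpha^m$, with $m = 2^{\ell+1}$ on $V_\ell$ (respectively $m = 3\cdot 2^{\ell+1}$ on $W_\ell$). By Corollary \ref{cor:traceclass}, $Q_\alpha$ is trace class on $L^2_\kappa(X)$ throughout the relevant open set, so it suffices to verify the two hypotheses of Lemma \ref{le:detholo}: an analytic-in-$\alpha$ kernel, and a uniform domination by a product $f(x)f(y)$ with $f\in L^2_\kappa(X)$ -- the latter only being needed on each small open subset of $V_\ell$, since analyticity is a local property and $V_\ell$ is connected.

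\textbf{Kernel and analyticity.} First I would write $H_\alpha^m$ as an iterated integral operator. Using the block decomposition \eqref{eq:S2P} and the identity $H_\alpha = c'_\alpha S J$, the operator $H_\alpha$ has a matrix-of-kernels form in which each scalar kernel is a linear combination of terms of the type $c'_\alpha\,M_{\veps\veps'}(u)\,k(\omega,\psi)\,N_\veps(v)$ composed with $J$, where $k$ is the kernel of $P$ given by \eqref{eq:Pkernel} and the multipliers $M_{\veps\veps'}, N_\veps$ are explicit. After iterating $m$ times, the resulting kernel $q_\alpha(x,y)$ is a finite sum of $m$-fold integrals of products of (i) the constant $(c'_\alpha)^m$, (ii) factors $(\sin 2\theta)^{\alpha/2-1}$, $y^{-\alpha/2}$ and $(1.v)^{-\alpha}, (1.v)^{-\alpha-1}$ coming from $S$, and (iii) polynomial factors in the coordinates from the $M_{\veps\veps'}$. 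Each individual factor is analytic in $\alpha$ on the strip $\Re(\alpha)\in(0,2)$ with possible poles of $c'_\alpha$ only at $\alpha\in\{1/2,1\}$ (coming from $a_0^2$ and from $c_\alpha$). Since $V_\ell\cup W_\ell \subset V$ avoids $\{1/2,1\}$, each factor is analytic, and using Lemma \ref{le:intholo} inductively on the $m$ nested integrals (once the uniform domination below is in place) one obtains that $\alpha \mapsto q_\alpha(x,y)$ is analytic on $V_\ell$ (respectively $W_\ell$) for $\rho\otimes\rho$-a.e.~$(x,y)$.

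\textbf{Domination by an $L^2$ product.} Fix $\alpha_0\in V_\ell$ and take a small compact neighborhood $\bar U\subset V_\ell$ of $\alpha_0$; let $\beta_-=\inf_{\alpha\in\bar U}\Re(\alpha)$ and $\beta_+=\sup_{\alpha\in\bar U}\Re(\alpha)$, so that $2^{-\ell}<\beta_-\le \beta_+<2^{-\ell+1}$. Since $|(\sin 2\theta)^{\alpha/2-1}|\le (\sin 2\theta)^{\beta_-/2-1}$, $|(1.v)^{-\alpha}|\le (1.v)^{-\beta_+}$, etc., and since $|c'_\alpha|$ stays bounded on $\bar U$, the pointwise bound $|q_\alpha|\le q^\star$ holds on $\bar U$, where $q^\star$ is exactly the scalar kernel bounded in the proof of Proposition \ref{prop:traceclass}, with exponent $\beta_+$ (choosing whichever endpoint gives the worst singularity in each term). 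The crucial output of that proof, namely \eqref{eq:km+1}, gives $q^\star(x,y)\le C\, x^{-\zeta_\ell/2}y^{-\zeta_\ell/2}$ (after the $\sin$-change of variables) with $\zeta_\ell = 2^{\ell-1}\beta_+<1$; this is already in product form. Translating back through the change of variable and weighting by $|i.e^{i\psi}|^{-\kappa}$, one gets $|q_\alpha(x,y)|\le f(x)f(y)$ for an explicit $f\in L^2_\kappa(X)$ (essentially a negative power of $\sin(2\psi)$ times $|i.e^{i\psi}|^{-\kappa/2}$), valid for all $\alpha\in\bar U$. The analogous argument using the $W_\ell$ part of Proposition \ref{prop:traceclass} handles the second case.

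\textbf{Conclusion.} Lemma \ref{le:detholo} applied on the interior $U$ yields that $\alpha\mapsto\det(I-H_\alpha^m)$ is analytic on $U$. Since $\alpha_0$ was arbitrary in $V_\ell$ (respectively $W_\ell$), the analyticity extends to the whole open set. The step I expect to be the most delicate is the second one: verifying that the composed $m$-fold kernel really admits a product dominator from the proof of Proposition \ref{prop:traceclass} requires carefully propagating the recursion \eqref{eq:I4rec} through the full kernel of $H_\alpha^m$ (not just through $P^m$) and keeping track of the $|i.u|^{-\kappa}$ weights so that the bound remains in $L^2_\kappa(X)$ uniformly on a neighborhood of $\alpha_0$.
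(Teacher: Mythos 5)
Your proposal matches the paper's proof in all essential respects: apply Lemma \ref{le:detholo} to $-H_\alpha^m$ with $m=2^{\ell+1}$ (resp.\ $3\cdot 2^{\ell+1}$), use Corollary \ref{cor:traceclass} for the trace-class hypothesis, localize to a relatively compact open $\Omega$ with $\bar\Omega\subset V_\ell$ to get $2^{-\ell}<\beta_0\le\Re(\alpha)\le\beta_1<2^{-\ell+1}$, iterate the kernel estimates from Proposition \ref{prop:traceclass} through the recursion \eqref{eq:I4rec} to reach a product dominator in $L^2_\kappa$, and treat the block multipliers $M_{\veps\veps'},N_\veps$ as harmless bounded analytic factors transported through \eqref{eq:S2P}. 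The only thing worth flagging is a minor notational slip: the paper tracks two separate families of exponents, $\zeta_s=2^{s-1}\beta_0$ (for the $|\psi-\omega|$ singularity, which must exceed $1$ after $\ell$ steps to kill that singularity) and $\delta_s=2^{s-1}\beta_1$ (for the $\sin(2\psi)$ power, which must remain $<1$ for square-integrability of the product dominator); you collapsed both to a single $\beta_+$ with a verbal gesture at "whichever endpoint gives the worst singularity," which is correct in spirit but needs to be split exactly as the paper does to avoid using the wrong endpoint in one of the two places.
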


\begin{proof} We treat the first case, the second being similar.
Let $\Omega \subset V_\ell$ be an open set with $\bar \Omega \subset V_\ell$. In particular for all $\alpha \in \Omega$, 
$$
2^{-\ell} <  \beta_0 <  \Re(\alpha)<\beta_1 < 2^{-\ell+1}. 
$$
Consider the operator $P$ defined in \eqref{eq:defP1} on $L^2 _\kappa (S_1^+)$. We denote the kernel of $P^{2^{s-1}}$ by $k_{s}$ (it depends implicitly on $\alpha \in V_\ell$). We will prove that  for all $\omega, \psi \in (0,\pi/2)^2$, $\alpha \mapsto k_{\ell+1} (\omega, \psi)$ is analytic on $\Omega$ and that $ | k_{\ell+1} (\omega, \psi) | \leq C f (\sin ( 2 \psi) )f ( \sin (2\omega)) $ with 
$$f(x) = \left\{\begin{array}{ll} x^{\beta_0/2-1} & \hbox{ if $\ell = 0$} \\
 x^{-2^{\ell-2} \beta_1} & \hbox{ if $\ell \geq 1$.}
\end{array} \right.
$$
We will then argue that we have the same properties for the kernels of $H^{2^{\ell}}_\alpha$ and  $H^{2^{\ell+1}}_\alpha$. It will remain to apply Corollary \ref{cor:traceclass} and Lemma \ref{le:detholo} to conclude the proof.  The case $\alpha\in W_\ell$ is similar.

\noindent{\em Iterated kernels of $P$, case $\ell = 0$.} 
From the explicit form of the kernel $k = k_1$ in \eqref{eq:Pkernel}, we see by Lemma \ref{le:intholo} that,  for all $\omega ,  \psi$,  $\alpha \mapsto k_1 ( \omega, \psi)$ is analytic on $ \Omega$.  From \eqref{eq:boundkernel}, $|k_1 ( \omega, \psi)| \leq C (\sin (2 \psi) \wedge \sin (2 \omega) ) ^{\beta_0/2 -1 } \leq C f( \sin (2 \psi) ) f (\sin (2 \omega) )$. 

\noindent{\em Iterated kernels of $P$, $\ell \geq 1$.} 
First, from the explicit form of the kernel $k = k_1$ in \eqref{eq:Pkernel} and Lemma \ref{le:intholo}, for all $\omega \ne  \psi$,  $\alpha \mapsto k_1 ( \omega, \psi)$ is analytic on $ \Omega$ and from \eqref{eq:boundkernel}, $|k_1 ( \omega, \psi)| \leq C  |\psi- \omega|^{\beta_0 -1}(\sin (2 \psi) \wedge \sin (2 \omega) ) ^{-\beta_1/2 }  $.

We set $\delta_\ell = 2^{\ell-1} \beta_1$ and $\zeta_\ell = 2^{\ell-1} \beta_0$. We use the following inequality (which generalizes \eqref{eq:I4rec}) for $0 < a, b < 1$, $0 < \delta , \zeta < 1$,
\begin{equation*}
J_{\zeta,\delta} (a,b) =  \int_0 ^1 |x - a |^{\zeta-1} | x - b |^{\zeta-1} (x \wedge a) ^{-\delta/2} (x \wedge b)^{-\delta/2} dx \leq  \left\{ \begin{array}{ll}   C  a^ {-\delta} (b - a)^{ 2 \zeta -1} & \hbox{ if $0 <  2 \zeta < 1$}, \\
  C  a^{ -  \delta/2} b^{-\delta/2} & \hbox{ if $ 1 < 2 \zeta$}.
\end{array}\right.\label{eq:I4rec2} 
\end{equation*}
We may argue as in the proof of Proposition \ref{prop:traceclass} and use Lemma \ref{le:intholo} by recursion on $1 \leq s \leq \ell$. We obtain that $| k_s (\omega, \psi) | \leq C_s |\psi - \omega|^{\zeta_s -1} (\sin (2 \psi) \wedge \sin (2 \omega) ) ^{-\delta_s/2 } $ and $\alpha \mapsto k_s( \omega, \psi)$ analytic on $\Omega$ for all $\omega \ne \psi$ in $(0,\pi/2)$. Since $2 \zeta_\ell = \zeta_{\ell+1} = 2^{\ell} \beta_0 > 1$, we find that 
$ k_{\ell+1} ( \omega, \psi) \leq C  f (\sin ( 2 \psi) )f ( \sin (2\omega))$ and is analytic in $\alpha \in \Omega$ for all $(\omega, \psi) \in (0,\pi/2)^2$.

\noindent{\em Iterated kernels of $H_\alpha$.} For $x = (e^{i \omega},\veps) ,y = (e^{i \psi},\delta) \in X$, let $h_s(x,y)$ be the kernel of $H^{2^{s-1}}_\alpha$. We now study the analyticity of the kernels of $\alpha \mapsto h_s(x,y)$ for $s \geq 1$. First, in \eqref{eq:defHalpha}, the function $\alpha \mapsto c'_\alpha$ is analytic on $U$. It thus  suffices to study  $\hat S = S J$.  If $\hat P = P J$, we have from \eqref{eq:S2P} in matrix form
$$
\hat S = \begin{pmatrix} M_{00} & M_{01} & M_{0i} \\
M_{10} & 1 & 0 \\
M_{i0} & 0 & 1 
\end{pmatrix}  \begin{pmatrix} \hat P & 0 & 0 \\
0 & \hat P & 0 \\
0 & 0 & \hat P\end{pmatrix}\begin{pmatrix} N_1 & 0 & 0 \\
0 & N_2 & 0 \\
0 & 0 & N_3
\end{pmatrix} 
J
 $$
 In particular, the kernel of $\hat S$ at $x = (e^{i \omega},\veps) ,y = (e^{i \psi},\delta) \in X$ is equal to $\sigma ( x, y) = g^\alpha_{\veps,\delta} ( \psi) k_1 ( \psi, \omega)f^\alpha_{\veps,\delta} ( \omega) $, where $g^\alpha_{\veps,\delta}( \psi), f^\alpha_{\veps,\delta}(\omega)$ are  bounded and analytic in $\alpha \in \Omega$. This factors $g^\alpha_{\veps,\delta}(\psi), f^\alpha_{\veps,\delta}(\psi)$ is harmless and  the above argument carries over easily to this more general situation. We find that the   kernel $\sigma_{\ell} (x,y)$ of $\hat S^{2^{\ell-1}}$ satisfies  for all $\omega, \psi \in (0,\pi/2)^2$, $\alpha \mapsto \sigma_{\ell+1} (x,y)$ is analytic on $\Omega$ and $ | \sigma_{\ell+1} (x, y) | \leq C f (\sin ( 2 \psi) )f ( \sin (2\omega)) $. By a new application Lemma \ref{le:intholo}, the same holds for $\sigma_{\ell+1}$ replaced by $\sigma_{\ell+2}$.  Finally, since $H_\alpha = c'_\alpha \hat S$, the same holds for $h_{\ell+2}$.  By Corollary \ref{cor:traceclass}, $H_\alpha^{2^{\ell+1}}$ is trace class.  Hence, the conditions of Lemma \ref{le:detholo} are fulfilled for $-H_\alpha^{2^{\ell+1}}$, we obtain that $\alpha \mapsto \det ( I- H_\alpha^{2^{\ell+1}})$ is analytic on $\Omega$. \end{proof}

\paragraph{Step 4 : Proof of Theorem \ref{th:4IFT}. }
We need a final lemma before proving Theorem \ref{th:4IFT}
\begin{lem}\label{le:detnonzero}
Let $\ell \geq 0$ be an integer and $ 2^{-\ell} < \beta < 2^{-\ell+1}$. If $m \geq 2^{\ell}$ then, 
$$
\lim_{t \to \infty} \| H^{m}_{\beta + it } \|= 0 ,
$$ 
where $\| \cdot \|$ denotes the operator norm in $L^2_\kappa (X)$.  Similarly, the same conclusion holds if $ 2^{-\ell} < 3\beta < 2^{-\ell+1}$ with $\ell \geq 1$ and $m \geq 3  . 2^{\ell}$. 
\end{lem}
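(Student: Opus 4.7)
The plan is to bound the operator norm $\|H^m_{\beta+it}\|$ by the Hilbert-Schmidt norm in $L^2_\kappa(X)$ and show the latter vanishes as $|t|\to\infty$. Using the matrix structure \eqref{eq:S2P} of $S$ and the uniform-in-$\alpha$ boundedness of $M_{\veps\delta}, N_\veps$, together with the simplification $c'_\alpha = c_\alpha \cdot 2/(\alpha a_0^2) = \alpha\sin(\pi\alpha/2)/(2^{\alpha/2}\pi)$ obtained via the reflection formula $\Gamma(\alpha/2)\Gamma(1-\alpha/2) = \pi/\sin(\pi\alpha/2)$, the task reduces to controlling $\|(c'_\alpha P)^m\|_{\text{HS}}$ on $L^2_\kappa(S_1^+)$, where $P$ has kernel $k(\omega,\psi)$ given by \eqref{eq:Pkernel}.

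The first step is pointwise vanishing. For a.e.~$(\omega, \psi)\in(0,\pi/2)^2$ with $\omega\neq\psi$, write $k^{\beta+it}(\omega,\psi) = \sin(\psi-\omega)^{\beta-1}e^{it\log\sin(\psi-\omega)}I^{\beta+it}(\omega,\psi)$, with
$$
I^{\beta+it}(\omega,\psi) = \int_{\omega\vee\psi}^{\pi/2}(\sin 2\theta)^{\beta/2-1}\sin(\theta-\omega)^{-\beta/2}\sin(\theta-\psi)^{-\beta/2}e^{(it/2)\Phi(\theta)}\,d\theta,
$$
and phase $\Phi(\theta) = \log\sin 2\theta - \log\sin(\theta-\omega) - \log\sin(\theta-\psi)$. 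The phase is real-analytic and non-constant on the integration interval (constancy would force $\{\omega,\psi\}\subset\{0,\pi/2\}$), hence strictly monotonic on finitely many subintervals; a piecewise change of variable $u = \Phi(\theta)$ converts each piece into a Fourier integral with $L^1$ amplitude, so Riemann-Lebesgue yields $I^{\beta+it}(\omega, \psi)\to 0$, and therefore $k^{\beta+it}(\omega,\psi)\to 0$.

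The second step is iteration and dominated convergence. Writing the iterated kernel as
$$
k_m^{\beta+it}(\omega,\psi) = \int \prod_{i=1}^m k^{\beta+it}(\zeta_{i-1},\zeta_i)\,d\rho^{\otimes(m-1)}
$$
with $\zeta_0 = e^{i\omega}$, $\zeta_m = e^{i\psi}$, the uniform bound $|k^{\beta+it}|\leq k^\beta$ from \eqref{eq:boundkernel} provides an integrable dominant $\prod k^\beta$ (finite for a.e.~$(\omega,\psi)$ by Proposition \ref{prop:traceclass}), so dominated convergence propagates the pointwise vanishing to $k_m^{\beta+it}\to 0$ a.e. A further dominated convergence applied to $\|P^m_{\beta+it}\|_{\text{HS}}^2 = \int |k_m^{\beta+it}|^2\,d\rho\otimes d\rho$, dominated by $|k_m^\beta|^2 \in L^1$ for $m\geq 2^\ell$, yields $\|P^m_{\beta+it}\|_{\text{HS}}\to 0$.

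To pass from $P^m$ to $H^m_\alpha = (c'_\alpha)^m(SJ)^m$ one must absorb the prefactor $(c'_\alpha)^m$, whose modulus grows like $|t|^m e^{m\pi|t|/2}$. The necessary cancellation is hidden inside the closed form of $I^\alpha$: via the substitution $T = \tan\theta$,
$$
I^\alpha(\omega,\psi) = 2^{\alpha/2-1}(\cos\omega\cos\psi)^{-\alpha/2}(\tan\psi-\tan\omega)^{-\alpha/2}\cdot \frac{\pi}{\sin(\pi\alpha/2)}\cdot{}_2F_1\Bigl(\tfrac{\alpha}{2},1-\tfrac{\alpha}{2};1;\tfrac{-\tan\omega}{\tan\psi-\tan\omega}\Bigr),
$$
so that $c'_\alpha I^\alpha = \tfrac{\alpha}{2}(\cos\omega\cos\psi)^{-\alpha/2}(\tan\psi-\tan\omega)^{-\alpha/2}{}_2F_1(\tfrac{\alpha}{2},1-\tfrac{\alpha}{2};1;z)$ with $z<0$, and the factor $\pi/\sin(\pi\alpha/2)$ precisely cancels the exponential part of $c'_\alpha$. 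The main technical obstacle is then to establish sufficiently tight asymptotic bounds on ${}_2F_1(\alpha/2, 1-\alpha/2;1;z)$ for $\Im\alpha$ large and $z<0$ (via Mellin-Barnes representations or classical Legendre-function asymptotics), so that the renormalised iterated kernel admits a $t$-uniform $L^2$-integrable dominant, allowing the dominated convergence argument above to carry through for the full operator $H^m_\alpha$. The $W_\ell$ case is identical with $m \geq 3\cdot 2^\ell$, using the corresponding Hilbert-Schmidt conclusion from Proposition \ref{prop:traceclass}.
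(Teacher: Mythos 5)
The paper's own proof of this lemma is a two-liner: it factors $H_{\beta+it} = f(t)\hat S_{\beta+it}$ with $f(t)=2c_{\beta+it}/(\alpha a_0^2)=c'_{\beta+it}$, notes that $\|\hat S^m_{\beta+it}\|\leq\|\hat S^m_{\beta}\|_{\mathrm{HS}}$ (uniformly in $t$, by bounding the kernel modulus as in Proposition \ref{prop:traceclass}), and then asserts that $f(t)\to0$ as $t\to\infty$ ``by definition \eqref{defc}.'' Your closed form $c'_\alpha=\alpha\sin(\pi\alpha/2)/(2^{\alpha/2}\pi)$ is correct, and it shows precisely the opposite: $|\sin(\pi\beta/2+i\pi t/2)|=\sqrt{\sin^2(\pi\beta/2)+\sinh^2(\pi t/2)}$ grows like $e^{\pi|t|/2}/2$, so $|c'_{\beta+it}|\sim |t|\,e^{\pi|t|/2}/(2^{\beta/2+1}\pi)\to\infty$. (Equivalently, $|\Gamma(\beta/2+it/2)|$ decays exponentially as $t\to\infty$, so $1/|\Gamma(\alpha/2)^2|$ and hence $|c_{\beta+it}|$ blow up.) So the paper's scalar step appears to contain a genuine error, and your instinct to look for where the compensating decay must actually come from is sound.

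Your proposed route --- Riemann--Lebesgue for pointwise decay of the kernel, dominated convergence for the iterated kernel, then an explicit hypergeometric representation of the $\theta$-integral in which the factor $\pi/\sin(\pi\alpha/2)$ cancels the exponential part of $c'_\alpha$ --- is a reasonable line of attack, but it is not finished, and you say so yourself. The crucial missing piece is quantitative: Riemann--Lebesgue gives only qualitative vanishing $k^{\beta+it}\to0$ with no rate, and the dominant you invoke for the dominated-convergence steps, $|k^{\beta+it}_m|\le k^{\beta}_m$, is $t$-independent, so it cannot by itself absorb a prefactor $|c'_\alpha|^m\sim|t|^m e^{m\pi|t|/2}$. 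What is actually needed is a $t$-uniform, square-integrable dominant for the \emph{renormalized} iterated kernel $|c'_\alpha|^m|k^{\beta+it}_m|$ together with its pointwise decay --- which is exactly the ${}_2F_1(\alpha/2,1-\alpha/2;1;z)$ asymptotics you defer, plus control of the leftover polynomial factor $\alpha/2\sim|t|$ incurred at each application of $H_\alpha$. Until those bounds are in place, your argument has a gap at the same spot where the paper's proof breaks down, so it cannot simply be patched by appealing to the paper.
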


\begin{proof}
We observe that 
$
H_{\beta + it} = f (t) \hat S, 
$
where $f(t) = \frac {2c_{\beta+it} }{\alpha a_0^2} $ goes to zero when $t$ goes to infinity  by definition \eqref{defc}. Moreover, the operator norm of $\hat S^m=\hat S^m_\alpha$ 
is bounded by the Hilbert-Schmidt norm of $\hat S_{\Re (\alpha)}^m$ which is finite by Proposition  \ref{prop:traceclass}  if $m$ is large enough. \end{proof}

We are now ready to prove Theorem \ref{th:4IFT}.  Let $\ell \geq 0$ be an integer, $V_\ell , W_\ell$ be as in Proposition \ref{prop:traceclass} and  $F$ be a closed subset of $V_\ell$.  From \eqref{eq:inclusionspectrum} and Lemma \ref{le:compactness}, if $I + K_\alpha$ is not an isomorphism of $\cH_{\alpha/2}$ then  $-1$ is an eigenvalue of $H_\alpha$ in $C_\kappa(X)$ and $1$ is an eigenvalue of $H_\alpha^m$ in $C_\kappa(X)$ for any $m$ even. If $\alpha \in V_\ell$, we take $m = 2^{\ell+1}$, then by Corollary \ref{cor:traceclass}, $H_\alpha^m$ is a trace class operator in $L^2_\kappa (X)$. Since $C_\kappa(X) \subset L^2_\kappa(X)$, from Theorem \cite[Theorem 3.7]{MR2154153}, we have $\det (I  - H_\alpha^m) = 0$. In summary, 
$$
\cA \cap F  \subset \{ \alpha \in F : \det (I  - H_\alpha^m) = 0 \}. 
$$
However, by Proposition \ref{prop:holodetH},  $\varphi : \alpha \mapsto \det (I  - H_\alpha^m)$ is analytic on $V_\ell$ and by Lemma \ref{le:detnonzero}, $\det (I  - H_\alpha^m)$ is non-zero for $\Im(\alpha)$ large enough. It follows that the level set $0$ of $\varphi$ cannot have accumulation points in $F$.  
Similarly, if $F\subset W_\ell$, $F$ cannot have accumulation points. This proves Theorem \ref{th:4IFT}. \qed

\section{Local law : proofs of Proposition \ref{prop:afpi}  and Theorem \ref{th:main}}
\label{sec:rates}

In this section, we conclude the proof of Theorem \ref{th:main}. We will first establish Proposition \ref{prop:afpi}, the main technical steps are an adaptation of  \cite{BG}. Then, using the result of the previous section, we will derive a quantitative estimate on the resolvent of $A$. 

\subsection{Concentration and deviation inequalities}
\label{subsec:conc}

For the reader convenience, we start by stating a lemma on the concentration for the diagonal of the resolvent of random matrices. The Lipschitz norm of $f:\bC \to\bC $ is $$
\|f \|_{L} = \sup_{ x \ne y }\frac{ |f(x) - f(y) | }{ |x - y |}.
$$
\begin{lem}[{\cite[Lemma C.3]{BG}}]\label{le:concres}
 Let $\beta \in (0,1)$ and $z = E +  i\eta \in \dC_+$. Let $B \in M_n(\dC)$ be a random Hermitian matrix and $G  = (B- z)^{-1} $. Let us assume that the
  vectors $(B_i)_{1 \leq i \leq n}$, where $B_i := (B_{ij})_{1 \leq j \leq i}
  \in \bC^i$, are independent. Then for any $f:\bC\to\bR$ and every
  $t\geq0$,
  \[
  \bP \left( \left|\frac 1 n  \sum_{k=1}^n  f (G _{kk} ) -\bE \frac 1 n  \sum_{k=1}^n  f (G_{kk} )  \right|  \geq t \right) %
  \leq 2 \exp\left({-\frac{n  \eta^2 t^2}{8 \| f\|_L ^2 }}\right).
  \]
\end{lem}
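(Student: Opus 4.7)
The plan is to apply the Azuma--Hoeffding inequality to the Doob martingale generated by the filtration $\mathcal F_k = \sigma(B_1, \ldots, B_k)$ and the functional $F(B) = \frac{1}{n}\sum_{i=1}^n f(G_{ii})$. Write $F - \bE F = \sum_{k=1}^n X_k$ with martingale differences $X_k = \bE[F \mid \mathcal F_k] - \bE[F \mid \mathcal F_{k-1}]$; the goal is a uniform bound $|X_k| \leq c\,\|f\|_L/(n\eta)$ for some absolute constant $c$, after which Azuma--Hoeffding produces the desired tail of the form $2\exp(-c' n \eta^2 t^2 /\|f\|_L^2)$.

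The bound on $|X_k|$ is purely deterministic. Introducing an independent copy $B'_k$ of the row $B_k$ and letting $\widetilde B$ be $B$ with its $k$-th row/column replaced by $B'_k$, we have $X_k = \bE[F(B) - F(\widetilde B) \mid \mathcal F_k]$. Using Lipschitzness of $f$, the task reduces to showing
\[
\sum_{i=1}^n |G_{ii} - \widetilde G_{ii}| \leq \frac{C}{\eta},
\]
uniformly in $B, \widetilde B$, where $\widetilde G = (\widetilde B - zI)^{-1}$. We factor this through the resolvent $G^{(k)}$ of the $(n-1)\times(n-1)$ minor obtained by deleting the $k$-th row/column (which is the same for $B$ and $\widetilde B$), so it suffices to prove
\[
\sum_{i=1}^n |G_{ii} - G^{(k)}_{ii}| \leq \frac{2}{\eta},
\]
with the convention $G^{(k)}_{kk} = 0$.

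The key ingredient is the Schur complement identity: for $i \neq k$,
\[
G_{ii} - G^{(k)}_{ii} = \frac{|(G^{(k)} b)_i|^2}{\xi}, \qquad \xi = B_{kk} - z - b^* G^{(k)} b = -\frac{1}{G_{kk}},
\]
where $b$ is the $k$-th column of $B$ with the diagonal entry removed. Combined with the Ward identity $\Im G^{(k)} = \eta\,(G^{(k)})^* G^{(k)}$ (an immediate consequence of $G^{(k)} - (G^{(k)})^* = 2i\eta (G^{(k)})^* G^{(k)}$), this yields
\[
\sum_{i \neq k} |G_{ii} - G^{(k)}_{ii}| = \frac{\|G^{(k)} b\|^2}{|\xi|} = \frac{\Im(b^* G^{(k)} b)}{\eta\,|\xi|} \leq \frac{1}{\eta},
\]
using $|\xi| \geq |\Im \xi| = \eta + \Im(b^* G^{(k)} b)$. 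The diagonal term $i=k$ contributes $|G_{kk}| \leq 1/\eta$, giving the required $2/\eta$.

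The main obstacle is precisely this deterministic Schur-complement bound. A naive operator-norm estimate gives only $|G_{ii} - \widetilde G_{ii}| \leq 2/\eta$ for each $i$, which summed over $i$ costs a factor of $n$ and would destroy the desired concentration scale; the Ward-type identity is exactly what turns the sum into an $O(1/\eta)$ quantity, and is the only place where Hermiticity of $B$ is essentially used.
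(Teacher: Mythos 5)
Your proof is correct and follows the standard route for this concentration estimate: a Doob martingale along the row filtration, a bounded-differences estimate, and a deterministic bound $\sum_i |G_{ii}-G^{(k)}_{ii}|\leq 2/\eta$ obtained from Schur's complement together with the Ward identity. The lemma is quoted in the present paper from \cite[Lemma C.3]{BG} without proof, so there is no in-paper argument to compare against; your argument is the one used there and elsewhere in the literature.

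Two small remarks. First, for a genuinely complex Hermitian $B$ the Schur formula gives $G_{ii}-G^{(k)}_{ii}=(G^{(k)}b)_i\,(b^*G^{(k)})_i/\xi$, which is not $|(G^{(k)}b)_i|^2/\xi$ in general (it is only so when the entries are real); one should instead bound
$\sum_{i\neq k}|G_{ii}-G^{(k)}_{ii}|\leq \|G^{(k)}b\|\,\|(G^{(k)})^*b\|/|\xi|=\|G^{(k)}b\|^2/|\xi|$
by Cauchy--Schwarz and the spectral theorem, and the rest of your computation goes through unchanged. Second, the plain Azuma bound with $|X_k|\leq 4\|f\|_L/(n\eta)$ only yields $2\exp(-n\eta^2 t^2/(32\|f\|_L^2))$; to recover the stated constant $8$ one should use the sharp bounded-differences (McDiarmid) form, $\bP(|F-\bE F|\geq t)\leq 2\exp(-2t^2/\sum_k c_k^2)$, where $c_k=4\|f\|_L/(n\eta)$ controls the oscillation of $B_k\mapsto \bE[F\mid B_1,\ldots,B_k]$ rather than $|X_k|$ itself. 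It is also worth noticing that the paper's own Lemma~\ref{le:pertnorm} (with $\beta=1$ and $\rank(B-\widetilde B)\leq 2$) gives the same deterministic bound $\sum_i|G_{ii}-\widetilde G_{ii}|\leq 4/\eta$ directly via the SVD of $G-\widetilde G$, so the Schur-complement/Ward route and the low-rank SVD route are interchangeable in establishing the martingale increment bound.
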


%

Our first claim checks that the norm $\| \cdot \|_\kappa$ has some H\"older regularity. 

\begin{lem}\label{le:gammaholder}
Let $\beta \in (0,1)$  and for $k \in \{1,2\}$, $x_k\in \cK_1$, $|x_k| \leq \eta^{-1}$, $\eta\le 1$.
\begin{enumerate}[(i)]
\item We define $\gamma_k \in \bar \cH^0_\beta$ by $$
\gamma_k (u) =  \PAR{ x_k . u }^{\beta}. 
$$
There exists a constant $c = c(\beta)$ such that for any $0 < \delta < \beta$, 
$$
\|\gamma_k\|_{1 - \beta + \delta} \leq c |x_k|^{\beta} \AND \| \gamma_1 - \gamma_2 \|_{1 - \beta + \delta}  \leq c \eta^{-\beta } \PAR{ |x_1 - x_2 |^\beta + \eta^{\delta} |x_1 - x_2|^{\delta}}. 
$$
\item Assume additionally that  $\Re (x_k) \geq s$. We set $\gamma_k (u) = (x_k^{-1}.u )^\beta$.  Then, for some $c = c(\beta)$, 
\begin{equation}\label{le:hihi}
 \| \gamma_1 - \gamma_2 \|_{1 - \beta + \delta} \leq c s^{\beta -2} \eta^ { 2 \beta - 1} | x_1 - x_2|.
\end{equation}
\end{enumerate}
\end{lem}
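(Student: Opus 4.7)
My plan is to establish the three claims in turn, working directly with the explicit formulas $\gamma_k(u) = (x_k.u)^\beta$ and $\partial_\veps\gamma_k(u) = \beta(x_k.u)^{\beta-1} y_k$ (with $y_k = x_k$ for $\veps = 1$ and $y_k = \bar{x}_k$ for $\veps = i$), and using integration along the straight segment $x_s = x_2 + s(x_1 - x_2)$ for the difference bounds. The main technical obstacle, appearing in the derivative estimate of (i), is the control of a singular integral $\int_0^1 |x_s.u|^{\beta-2}|x_s|\, ds$ near zeros of $x_s.u$; I would resolve it by combining a geometric inequality with a parameterization of $|x_s.u|^2$ described below.

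For the pointwise bound in (i), the $L^\infty$ estimate is immediate from $|x.u| \leq \sqrt 2 |x|$. The derivative estimate reduces to the weighted inequality
\[
|i.u|^{1-\beta+\delta}\,|x.u|^{\beta-1} \leq c\,|x|^{\beta-1} \qquad (x \in \cK_1,\ u \in S_1^+),
\]
which I would prove by writing $|x.u|^2 = \Re(x)^2 A^2 + \Im(x)^2 t^2$ with $A = 1.u \in [1,\sqrt 2]$, $t = |i.u| \in [0,1]$, $A^2+t^2 = 2$, and then verifying the elementary inequality $t^{2p}(a^2+b^2) \leq a^2 A^2 + b^2 t^2$ for $p = (1-\beta+\delta)/(1-\beta) \geq 1$, which holds because $t^{2p} \leq t^2 \leq A^2$ and $t^{2p} \leq 1 \leq A^2$. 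Combined with $|\partial_\veps\gamma_k| \leq \beta |x_k.u|^{\beta-1} |x_k|$, this yields $\|\gamma_k\|_{1-\beta+\delta} \leq c|x_k|^\beta$.

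For the H\"older bound in (i), the $L^\infty$ part uses the standard inequality $|y^\beta - z^\beta| \leq C_\beta |y-z|^\beta$ on $\overline{\cK_1}$ applied to $y = x_1.u$, $z = x_2.u$. For the derivative, integration along the segment yields
\[
\partial_\veps(\gamma_1-\gamma_2) = \int_0^1 \bigl[\beta(\beta-1)(x_s.u)^{\beta-2}((x_1-x_2).u)\,y_s + \beta(x_s.u)^{\beta-1}(y_1-y_2)\bigr] ds.
\]
I would bound the second term using the weighted estimate from the preceding paragraph together with $\int_0^1 |x_s|^{\beta-1}ds \leq C |x_1-x_2|^{\beta-1}$ (from $|x_s| \geq |x_1-x_2|\,|s-s^\ast|$ near the closest point $s^\ast$ on the segment to $0$). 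For the first, more singular term, setting $p = (x_1-x_2).u$, I would use the parameterization $|x_s.u|^2 = h^2 + |p|^2(s-s^\ast)^2$ (with $s^\ast$ now minimizing $|x_s.u|$) to bound $\int_0^1|x_s.u|^{\beta-2}|x_s|\,ds$ by $C|p|^{\beta-2}|x_1-x_2|$, and then invoke the geometric inequality $|p|^2 \geq |i.u|^2 |x_1-x_2|^2$ (immediate from the quadratic form for $|p|^2$) together with the identity $(1-\beta+\delta)+\beta-1 = \delta$ to get $|i.u|^\kappa|p|^{\beta-1}|x_1-x_2| \leq |i.u|^\delta|x_1-x_2|^\beta \leq |x_1-x_2|^\beta$. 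Collecting, $\|\gamma_1-\gamma_2\|_{1-\beta+\delta} \leq C|x_1-x_2|^\beta$, which is dominated by the stated bound since $|x_1-x_2| \leq 2\eta^{-1}$ and $\eta \leq 1$.

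For (ii), I would first verify the identity $(x^{-1}).u = \overline{x.u}/|x|^2$ by direct computation with $x = a+ib$, giving $\gamma_k(u) = \overline{(x_k.u)^\beta}/|x_k|^{2\beta}$. For the Lipschitz bound, integration of $d\gamma_{x_s}/ds$ along $x_s$ combined with $dx_s^{-1}/ds = -x_s^{-2}(x_1-x_2)$ gives $|d\gamma_{x_s}/ds| \leq C\beta|x_s^{-1}.u|^{\beta-1}|x_s|^{-2}|x_1-x_2|$. The lower bound $|x_s.u| \geq \Re(x_s)(1.u) \geq s$ (by convexity of $\Re x_s \geq s$ along the segment) together with $|x^{-1}.u| = |x.u|/|x|^2$ yields $|x_s^{-1}.u|^{\beta-1}|x_s|^{-2} \leq s^{\beta-1}|x_s|^{-2\beta}$. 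The key decomposition $|x_s|^{-2\beta} = |x_s|^{1-2\beta}\cdot|x_s|^{-1} \leq \eta^{2\beta-1}\cdot s^{-1}$ (using the upper bound $|x_s| \leq \eta^{-1}$ in the first factor and the lower bound $|x_s| \geq s$ in the second) produces the desired pointwise bound $|d\gamma_{x_s}/ds| \leq Cs^{\beta-2}\eta^{2\beta-1}|x_1-x_2|$; the weighted derivative part of the $\kappa$-norm is handled analogously by combining this with the weighted estimate from (i).
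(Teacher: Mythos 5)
Your treatment of the first claim in (i) and the $L^\infty$ difference estimate are fine, but your handling of the singular term in the derivative difference in (i) contains a genuine error, and the conclusion you extract from it is false.

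You claim $\int_0^1 |x_s.u|^{\beta-2}|x_s|\,ds \leq C|p|^{\beta-2}|x_1-x_2|$, and combine it with $|p|\geq |i.u|\,|x_1-x_2|$ to conclude $\|\gamma_1-\gamma_2\|_{1-\beta+\delta}\leq C|x_1-x_2|^\beta$ with $C$ depending only on $\beta,\delta$. Neither statement is correct. For the integral: writing $|x_s.u|^2=h^2+|p|^2(s-s^\ast)^2$, one has $\int_0^1|x_s.u|^{\beta-2}\,ds \asymp h^{\beta-1}/|p|$ when $h\lesssim|p|$, not $|p|^{\beta-2}$, and the perpendicular distance $h$ of the line $\{x_s.u\}$ from the origin can be arbitrarily smaller than $|p|$; your bound silently replaces $h$ by $|p|$. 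For the final estimate, take $x_1=iL$, $x_2=iL+\epsilon$ with $0<\epsilon\ll L=\eta^{-1}$, and $u\in S_1^+$ with $t:=|i.u|=\epsilon/(10L)$. Then $x_1.u=itL$ has modulus $tL\ll\epsilon$, so $|\partial_1\gamma_1(u)|=\beta(tL)^{\beta-1}L$ dominates $|\partial_1\gamma_2(u)|\asymp\epsilon^{\beta-1}L$, and
\[
|i.u|^{1-\beta+\delta}\,|\partial_1\gamma_1(u)-\partial_1\gamma_2(u)|\;\asymp\; t^{\delta}L^{\beta}\;\asymp\;\epsilon^{\delta}L^{\beta-\delta},
\]
whereas $C|x_1-x_2|^\beta=C\epsilon^\beta$. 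Since $\epsilon^\delta L^{\beta-\delta}/\epsilon^\beta=(L/\epsilon)^{\beta-\delta}\to\infty$ as $\epsilon/L\to0$, no $\eta$-independent constant can work; the $\eta$- and $\delta$-dependent second term in the stated bound is genuinely necessary (and in fact it is sharp in this example, being $c\eta^{-\beta}\eta^\delta|x_1-x_2|^\delta=cL^{\beta-\delta}\epsilon^\delta$). The paper avoids the singular integral entirely: it splits $g(x)=(x.u)^{\beta-1}(x.\veps)$ into a bounded part (where $|x.u|\leq 2s$) and a Lipschitz part, optimizes $s=|x_1-x_2|$ to obtain the pointwise bound $c|x_1-x_2|^\beta/|i.u|$, and then compensates the resulting blow-up at $|i.u|=0$ by a H\"older-in-$u$ estimate on the weighted derivative (eq.\ \eqref{eq:holdergamma2}) together with a second optimization, which is exactly what trades a power of $|x_1-x_2|$ for a power of $\eta$.

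A secondary remark on (ii): your ``key decomposition'' $|x_s|^{1-2\beta}\le\eta^{2\beta-1}$ deduced from $|x_s|\le\eta^{-1}$ requires $1-2\beta\ge 0$, i.e.\ $\beta\le 1/2$; for $\beta>1/2$ the inequality reverses and the decomposition fails. Since the lemma is stated for all $\beta\in(0,1)$, this step needs a different argument in the regime $\beta>1/2$.
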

\begin{proof}
We prove the second statement of (i). The first statement is easy. We shall use two simple bounds. First, for any $\beta \in (0,1]$, for all $x,y \in \cK_1$
\begin{equation}\label{eq:holderbeta}
| x^\beta - y^\beta | \leq  | x - y|^{\beta}.  
\end{equation}
Secondly, for any $u \in \cK_1^+$, $h \in \cK_1$, 
\begin{equation}\label{eq:hubound}
|h|  |i.u|  \leq |h.u| \leq \sqrt{2} |h|  |u|,
\end{equation}
(see \cite[Equation (55)]{BG}). From \eqref{eq:holderbeta}-\eqref{eq:hubound}, we get 
\begin{equation}\label{eq:holdergamma1}
| \gamma_1 (u) - \gamma_2 (u) | \leq | x_1. u - x_2. u |^{\beta} \leq 2^{\beta/2} |x_1 - x_2|^{\beta}.  
 \end{equation}
Similarly, we get
 \begin{equation}\label{eq:holdergamma1b}
| \gamma_1 (u) - \gamma_1 (u') |  \leq 2^{\beta/2}\eta^{-\beta} |u - u'|^{\beta}.  
 \end{equation}
Hence, 
$$
\sup_{ u \in S_1^+} |  \gamma_1 (u) - \gamma_2 (u)  | \leq c |x_1 - x_2|^{\beta}.  
$$
We also need to control the derivative of $\gamma_1 - \gamma_2$. For $\veps \in \{1,i\}$, 
$$
\partial_\veps \gamma_k (u) = \beta (x_k.u)^{\beta - 1} (x_k.\veps). 
$$
We consider the function $g \in \cH_\beta$ defined for $x \in \cK_1$ by $ g(x) = (x. u) ^{\beta-1} (x.\veps)$. For $s >0$ to be fixed later, let $\phi : \dC \to \dC$ be equal to one for $|z| \geq 2 s$, equal to zero for $|z| \leq s$ and growing linearly with the modulus in between. Thus $\phi$ is Lipschitz with constant $1/s$. We write
$$
g(x) = ( 1 - \phi(x.u) ) g(x) +  \phi(x.u) g(x) = g_1 (x) + g_2(x). 
$$
From \eqref{eq:hubound}, 
\begin{equation}\label{eq:hubound2}
|x. \veps | \leq \sqrt 2 |x| \leq \sqrt{2} \frac{|x.u|}{ |i.u|}.
\end{equation}
If follows that the function $g_1$ is bounded by $c_0 s^{\beta} / |i.u|$. Moreover, the derivative with respect to $x$ of $g(x)$ is equal for $\veps' \in \{1,i\}$,
$$
\partial_{\veps'} g(x) = (\beta-1) (x. u) ^{\beta-2} (\veps'.u) (x.\veps) +  (x. u) ^{\beta-1} (\veps'.\veps) ,
$$
where $\partial_1$ is the derivative with respect to the real part of $x$ and $\partial_i$ is the derivative with respect to the imaginary part of $x$. Using \eqref{eq:hubound2}, we deduce that on the support of the function $x \mapsto \phi(x.u)$,   $g$ is Lipschitz with constant $c s^{\beta-1} / |i.u|$. Thus $g_2$ is Lipschitz with constant $c s^{\beta-1} / |i.u|$ (for some new constant $c >0$). It follows that 
$$
| \partial_\veps \gamma_1 (u) - \partial_\veps \gamma_2 (u) | \leq |g_1 (x_1)|  + |g_1 (x_2)|  + | g_2(x_1) - g_2(x_2)  | \leq c_1 s^{\beta} |i.u|^{-1} + c_1 s^{\beta-1} |x_1 - x_2 |   |i.u|^{-1} . 
$$
We choose $s = |x_1 - x_2|$, we find for some new constant $c >0$. 
\begin{equation}\label{eq:jdejdq}
| \partial_\veps \gamma_1 (u) - \partial_\veps \gamma_2 (u) | \leq  c  |x_1 - x_2 |^{\beta}   |i.u|^{-1} . 
\end{equation}
The above bound is not quite enough due to the factor $|i.u|^{-1}$.  Let $\beta' = \beta - \delta$ and $a = (1 - \beta)/(1 - \beta') < 1$ From \eqref{eq:holderbeta} for any $u,v \in S_1^+$, $x \in \cK_1$,
$$
\ABS{ \PAR{  \frac{(x.u)^a}{i.u} }^{\beta'-1}  - \PAR{  \frac{(x.v)^a }{i.v} }^{\beta'-1} } \leq \ABS{ \frac{i.u }{(x.u)^a} - \frac{i.v}{(x.v)^a} }^{1 - \beta'}. 
$$
If $|i .u| \geq |i.v|$, we find from \eqref{eq:hubound}
$$
\ABS{ \frac{i.u}{(x.u)^a} - \frac{i.v}{(x.v)^a} } \leq \ABS{ \frac{(i.u - i.v)}{(x.u)^a} } + \ABS{\frac{(i.v) ((x.v)^a - (x.u)^a)}{(x.u)^a (x.v)^a } } \leq  \frac{|u - v|}{|x|^a |i .u|^a} +  \frac{\sqrt{2}^a |u - v|^a }{|x|^a |i .u|^{2a-1}}  
$$
where at the last step, we have used   \eqref{eq:holderbeta} (for $\beta = a < 1$). Using that $|x_k| \leq \eta^{-1}$, we arrive at 
\begin{equation}\label{eq:holdergamma2}| (i.u)^{1- \beta'} \partial_\veps \gamma_k(u)  - (i.v)^{1- \beta'} \partial_\veps  \gamma_k(v) | \leq \eta^{-\beta} \PAR{\frac{|u - v|}{(|i.u| \vee |i.v|)^a } }^{1- \beta'} + \eta^{-\beta} \PAR{\frac{|u - v|}{(|i.u| \vee |i.v|)^{2 - 1/a} } }^{1- \beta}
\end{equation}
Now, we fix $s$ to be chosen later. For any $v \in S_1^+$, there exists $u \in S_1^+$, such that $|u-v| \leq s$ and $|i.u| \geq s/2$, see \eqref{jhk}.  Let $t = \eta^ {- \beta} (s / (s/2)^a )^{1-\beta'}  + \eta^{-\beta} (s / (s/2)^{2 - 1/a} )^{1-\beta} $, 
 we have from \eqref{eq:holdergamma2} that 
 $$
\ABS{ (i.v)^{1- \beta'} ( \partial_\veps \gamma_1 (v) - \partial_\veps \gamma_2 (v) ) } \leq 2 t + \ABS{ (i.u)^{1- \beta'} ( \partial_\veps \gamma_1 (u) - \partial_\veps \gamma_2 (u) ) }. 
$$
Observe that $s^{\beta - \beta'} = s^\delta = c t \eta^{\beta}$. Hence, from  \eqref{eq:jdejdq}, we get

$$
\ABS{ (i.v)^{1- \beta'} ( \partial_\veps \gamma_1 (v) - \partial_\veps \gamma_2 (v) ) } \leq c \eta^{ -\beta} s^\delta + c |x_1 - x_2 |^{\beta}  s^{-\beta'}. 
$$
We choose $s = \eta  | x_1 - x_2|$. The second statement of  (i)  follows.  

To prove the claim (ii), observe that
the derivative of $x \mapsto (x^{-1}.u )^\beta$ in the direction $\veps \in \{1,i\}$ is 
$$
-  \beta x^{-2
} (\veps.u) (x^{-1}.u )^{\beta-1}. 
$$
We observe that $\Re (x.u) \geq \Re (x) \geq s$ and $\Re (x^{-1}) = \Re (x) / |x|^2$. This derivative is bounded by $c (\Re x) ^{\beta-1} |x|^{-2\beta}$. Thus, we find from the intermediate value theorem that $|\gamma_1 (u) - \gamma_2 (u) |\leq c s^{\beta-1} \eta^{-2\beta} |x_1 - x_2|$.  

Similarly, the derivative of $x \mapsto (x^{-1}.u )^{\beta-1} (x^{-1}.\veps')$ in the direction $\veps \in \{1,i\}$ is 
$$
-  (\beta-1) x^{-2} (\veps.u) (x^{-1}.u )^{\beta-2} (x^{-1}.\veps') - x^{-2} (x^{-1}.u )^{\beta-1} (\veps.\veps'). 
$$
The derivative is bounded by  $c | x|^{1 - 2\beta} \Re ( x)^{\beta-2}  + c | x|^{- 2\beta} \Re ( x)^{\beta-1} \leq 2c   | x|^{1 - 2\beta} \Re ( x)^{\beta-2}$.  \end{proof}

\begin{lem}\label{le:concnorm}
Let $\beta \in (0,1)$ and $z = E +  i\eta \in \dC_+$. Let $B \in M_n(\dC)$ be a random Hermitian matrix,  $G= (B - z)^{-1}$,  and we define $\gamma \in \bar \cH^0_\beta$ by $$
\gamma (u) = \frac 1 n \sum_{k=1}^n \PAR{ -i G_{kk} . u }^{\beta}. 
$$
Let us assume that the vectors $(B_i)_{1 \leq i \leq n}$ are independent, where $B_i = (B_{ij})_{ j \leq i} \in \dC^i$.  Then,  there exists a constant $c = c(\beta)$ such that for any $0 < \delta < \beta$ and $t \geq 0$, 
$$
 \dP \PAR{ \| \gamma - \dE \gamma \|_{1 - \beta + \delta}  \geq t } \leq  c (\eta^\beta t )^{-\frac 1  \delta} \exp \PAR{ - c n (\eta^{\beta} t )^{\frac 2 \delta} }.
$$
\end{lem}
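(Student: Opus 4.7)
The strategy combines pointwise concentration from Lemma \ref{le:concres} (via a Lipschitz approximation of the $\beta$-power map) with an $\epsilon$-net argument on $S_1^+$, exploiting the deterministic regularity furnished by Lemma \ref{le:gammaholder}(i) to convert sup-norm control into control of the full $\|\cdot\|_{1-\beta+\delta}$-norm. The parameters will be balanced so that the net size produces the prefactor $(\eta^\beta t)^{-1/\delta}$ and the pointwise concentration scale produces the exponent $n(\eta^\beta t)^{2/\delta}$.

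\textbf{Pointwise concentration.} Fix $u \in S_1^+$ and write $\gamma(u) = \tfrac{1}{n}\sum_k f_u(G_{kk})$ with $f_u(x) = (-ix.u)^\beta$ (treating its real and imaginary parts separately). Because $f_u$ is only $\beta$-H\"older on $\mathcal{K}_1$, I would convolve with a scale-$\tau$ mollifier to produce $f_{u,\tau}$ satisfying $\|f_u - f_{u,\tau}\|_\infty \leq c\tau^\beta$ and $\|f_{u,\tau}\|_L \leq c\tau^{\beta-1}$. Lemma \ref{le:concres} applied to $f_{u,\tau}$, combined with the triangle inequality to absorb the mollification error and the choice $\tau \sim s^{1/\beta}$, yields
\[
\dP\bigl(|\gamma(u) - \dE\gamma(u)| \geq s\bigr) \;\leq\; c\exp\bigl(-c_0\, n \eta^2 s^{2/\beta}\bigr).
\]

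\textbf{Net argument and interpolation.} Let $g := \gamma - \dE\gamma$. By Lemma \ref{le:gammaholder}(i), $g$ is $\beta$-H\"older on $S_1^+$ with constant $\lesssim \eta^{-\beta}$ and satisfies $\|g\|_{1-\beta+\delta} \leq c\eta^{-\beta}$ deterministically. Cover $S_1^+$ with an $\epsilon$-net $\mathcal{N}_\epsilon$ of cardinality $\lesssim 1/\epsilon$; a union bound of the pointwise estimate gives $\|g\|_\infty \leq 2s$ with probability at least $1 - (c/\epsilon)\exp(-c_0 n\eta^2 s^{2/\beta})$ as soon as $\eta^{-\beta}\epsilon^\beta \leq s$. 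To upgrade this to $\|g\|_{1-\beta+\delta}$, I would apply H\"older interpolation between the deterministic $\beta$-H\"older bound $[g]_\beta \leq c\eta^{-\beta}$ and the sup-norm bound:
\[
[g]_{\beta-\delta} \;\leq\; c\, [g]_\beta^{(\beta-\delta)/\beta}\,\|g\|_\infty^{\delta/\beta} \;\leq\; c\,\eta^{-\beta+\delta} s^{\delta/\beta},
\]
and use that the weight $|i.u|^{1-\beta+\delta}$ in $\|\cdot\|_{1-\beta+\delta}$ precisely compensates the derivative singularity at $u = e^{i\pi/4}$ (as is transparent from the defining form of each $\gamma_k$ and the homogeneity of $\mathcal{H}_\beta$), to conclude $\|g\|_{1-\beta+\delta} \leq c(s + \eta^{-\beta+\delta}s^{\delta/\beta})$. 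Choosing $s = c'(\eta^{\beta-\delta} t)^{\beta/\delta}$ makes the dominant term $\sim t$; the pointwise exponent then satisfies $n\eta^2 s^{2/\beta} = c\, n\, \eta^{2\beta/\delta} t^{2/\delta} = c\, n (\eta^\beta t)^{2/\delta}$, and the net scale $\epsilon \sim (s\eta^\beta)^{1/\beta} \sim (\eta^\beta t)^{1/\delta}$ gives the prefactor $\lesssim (\eta^\beta t)^{-1/\delta}$.

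\textbf{Main obstacle.} The delicate step is the translation from the H\"older seminorm bound $[g]_{\beta-\delta}$ to the weighted-derivative supremum appearing in $\|g\|_{1-\beta+\delta}$. A H\"older bound does not in general imply a pointwise derivative bound, and rescuing this step requires exploiting the explicit form of $g$ as an average of smooth $\beta$-power functions of $-iG_{kk}.u$, for which the weight $|i.u|^{1-\beta+\delta}$ exactly matches the derivative's homogeneous scaling. An alternative avoiding this interpolation is to establish direct pointwise concentration of the weighted derivative $(i.u)^\kappa \partial_\varepsilon \gamma(u)$ by applying Lemma \ref{le:concres} to a suitably truncated version of $\beta(i.u)^\kappa(-ix.u)^{\beta-1}(-ix.\varepsilon)$; the drawback is that one must carefully manage the derivative's singularity in the variable $x$ by cutting off at scale $\tau$ in $(-ix.u)$-space and estimating the resulting Lipschitz constant and approximation error. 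Either route leads to the stated bound after balancing as above.
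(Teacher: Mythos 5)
Your pointwise concentration of $\gamma(u)$ via a mollification at scale $\tau \sim s^{1/\beta}$ is equivalent to the paper's cut-off decomposition $f = (1-\phi)f + \phi f$ and yields the same tail $\exp(-cn\eta^2 s^{2/\beta})$, so that step is fine. The genuine gap is in the passage to the $\|\cdot\|_{1-\beta+\delta}$-norm, and you have flagged it yourself: this norm contains the pointwise weighted-derivative supremum $\sup_u |(i.u)^{1-\beta+\delta}\partial_\varepsilon g(u)|$, and a $(\beta-\delta)$-H\"older seminorm bound on $g$ obtained by interpolating $[g]_\beta$ with $\|g\|_\infty$ does not control it --- a $(\beta-\delta)$-H\"older function need not be differentiable anywhere, and the ``explicit form of $g$'' argument you gesture at is exactly what a real proof would have to supply. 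The closing sentence ``either route leads to the stated bound'' is therefore not justified: the interpolation route cannot be rescued as written.

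Your alternative route --- applying Lemma \ref{le:concres} directly to a truncated version of $\beta(i.u)^\kappa(x.u)^{\beta-1}(x.\varepsilon)$ --- is indeed what the paper does, but a nontrivial feature is left unaddressed. When one splits $g(x) = (x.u)^{\beta-1}(x.\varepsilon)$ into bounded and Lipschitz pieces at scale $s$, both the bound and the Lipschitz constant carry a factor $|i.u|^{-1}$ (since $|x.\varepsilon| \leq \sqrt{2}|x| \leq \sqrt{2}\,|x.u|/|i.u|$), so the pointwise tail for the weighted derivative is $\exp(-cn\eta^2 t^{2/\beta}|i.u|^{2-2\delta/\beta})$, which degenerates as $u \to e^{i\pi/4}$. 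Consequently the net used for the derivative part cannot be arbitrary: the paper takes a net of mesh $s$ whose points all satisfy $|i.u| \geq s/2$, and controls the off-net error via the \emph{weighted-derivative} modulus-of-continuity estimate of Lemma \ref{le:gammaholder} (bounding $|(i.u)^{1-\beta'}\partial_\varepsilon\gamma(u) - (i.v)^{1-\beta'}\partial_\varepsilon\gamma(v)|$), not a plain H\"older modulus of $g$. With your balancing $s^\delta \asymp t\eta^\beta$, this reproduces the stated prefactor $(\eta^\beta t)^{-1/\delta}$ and exponent $n(\eta^\beta t)^{2/\delta}$, so the arithmetic is right; to complete the proof you must (a) drop the interpolation route and (b) carry out the derivative concentration, the $|i.u|$ degeneracy, and the correspondingly restricted net.
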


\begin{proof}
 For $1 \leq k \leq n$, we set $h_k = -i G_{kk}$. We also set $\underline \gamma(u) = \gamma(u) - \dE \gamma(u)$. The first step is to find a concentration inequality for the function $\gamma(u)$ and its derivatives for any fixed $u \in S_1^+$.  It will follow from  Lemma \ref{le:concres}. We consider the function $f \in \cH_\beta$ defined for $x \in \cK_1$ by $ f(x) = (x.u)^{\beta}$. For $s >0$ to be fixed later, let $\phi : \dC \to \dC$ be equal to one for $|z| \geq 2 s$, equal to zero for $|z| \leq s$ and growing linearly with the modulus in between. Thus $\phi$ is Lipschitz with constant $1/s$. We write, 
$$
f(x) = ( 1 - \phi(x.u) ) f(x) +  \phi(x.u) f(x) = f_1 (x) + f_2(x). 
$$
The function $ f_1$ is bounded by $(2s)^{\beta}$. Let $t >0$, we set $s$ such that $(2 s)^{\beta} = t/4$. We get 
\begin{eqnarray}
 \dP \PAR{\ABS{ \underline \gamma (u) } \geq t } & = & \dP \PAR{ \ABS{\frac 1 n \sum_{k=1}^n f (h_k)  - \dE \frac 1 n \sum_{k=1}^n f (h_k)   } \geq t  }  \nonumber   \\
 & \leq & \dP \PAR{ \ABS{\frac 1 n \sum_{k=1}^n f_2 (h_k)  - \dE \frac 1 n \sum_{k=1}^n f_2 (h_k)   } \geq t / 2   }   \nonumber \\
 &\leq & 2 \exp \PAR{ - c n \eta^2 t^{2/\beta} } \label{eq:cgamma},
\end{eqnarray}
where the last line follows from Lemma \ref{le:concres} and that $f_2$ is Lipschitz with constant $c s^{\beta-1} = c' t^{1-1/\beta}$. 

Similarly,  for $\veps \in \{1,i\}$, we have 
$$
\partial_\veps \gamma(u) = \frac{1}{n} \sum_{k=1}^n \beta (h_k.u)^{\beta -1} (h_k. \veps). 
$$
We consider the function $g \in \cH_\beta$ defined for $x \in \cK_1$ by $ g(x) = (x. u) ^{\beta-1} (x.\veps)$. As in the proof of Lemma \ref{le:gammaholder}, we decompose
$
g(x) = g_1 (x) + g_2(x)
$
with  $g_1$  bounded by $c_0 s^{\beta} / |i.u|$ and $g_2$  Lipschitz with constant $c s^{\beta-1} / |i.u|$.  Now, for fixed $t >0$, we choose $s>0$ such that $c_0 s^{\beta} / |i.u| = t/4$. Using Lemma \ref{le:concres}, we find \begin{eqnarray*}
 \dP \PAR{\ABS{ \partial_\veps \underline \gamma(u)  } \geq t } & \leq & \dP \PAR{ \ABS{\frac 1 n \sum_{k=1}^n g_2 (h_k)  - \dE \frac 1 n \sum_{k=1}^n g_2 (h_k)   } \geq t / 2   } \\
 &\leq & 2 \exp \PAR{ - c n \eta^2 t^{2/\beta} |i.u|^{2/\beta}}.
\end{eqnarray*}
Hence, for any $t \geq 0$,
\begin{equation} \label{eq:cpartgamma}
 \dP \PAR{\ABS{ (i.u)^{1 - \beta + \delta} \partial_\veps \underline \gamma (u)  } \geq t } \leq  2 \exp \PAR{ - c n \eta^2 t^{2/\beta}   |i.u|^{2 - 2 \delta / \beta}}. 
\end{equation}

In the second and final step of the proof, we use a net argument to obtain the concentration for the norm. If $F$ is a $s$-net of $S^1_+$ with $2^{\beta/2} \eta^{-\beta} s^{\beta}  = t /4$,  from \eqref{eq:holdergamma1b}, we deduce that for any $t\geq 0$,  
$$
 \dP \PAR{\sup_{u \in S^+_1}\ABS{   \underline \gamma (u)} \geq t } \leq  \dP \PAR{\sup_{u \in F} \ABS{  \underline \gamma (u)  } \geq t /2}.
$$
There exists a net $F$ of cardinal bounded by $1/s$. From the union bound and \eqref{eq:cgamma}, we deduce that 
$$
 \dP \PAR{  \sup_{u \in S^+_1} \ABS{\underline \gamma (u)} \geq t } \leq  \frac{c}{\eta t ^{1/\beta} }  \exp \PAR{ - c n \eta^2 t^{2/\beta} }.
$$
Similarly, consider $F$ is a $s$-net of $S^1_+$  of cardinal at most $1/s$ such that $|i.u| \geq s/2$ for all $u \in F$. Set $\beta' = \beta - \delta$. From \eqref{eq:holdergamma2},  if  $\eta^ {- \beta} (s / (s/2)^a )^{1-\beta'}  + \eta^{-\beta} (s / (s/2)^{2 - 1/a} )^{1-\beta}    = t /4$, 
 we have that for any $t\geq 0$,  
$$
 \dP \PAR{\sup_{u \in S^+_1}\ABS{   (i.u)^{1 - \beta'} \partial_\veps \underline \gamma (u)} \geq t } \leq  \dP \PAR{\sup_{u \in F} \ABS{ (i.u)^{1 - \beta'} \partial_\veps \underline \gamma (u) } \geq t /2}.
$$
Observe that $s^{\beta - \beta'} = s^\delta = c t \eta^{\beta}$. From the union bound and \eqref{eq:cpartgamma}, we deduce that 
$$
 \dP \PAR{  \sup_{u \in S^+_1} \ABS{ (i.u)^{1 - \beta'} \partial_\veps \underline \gamma (u)} \geq t } \leq  c( \eta t ^{1/\beta} )^{-\beta / \delta} \exp \PAR{ - c n \eta^{2 \beta / \delta}  t^{2 / \delta} }.
$$
It concludes the proof. 
\end{proof}

The following lemma is a variant of the preceding statement. 
\begin{lem}\label{le:concres2}
Let $0 < \alpha < 2$,  $\beta \in (0,1)$, $(h_k)_{1 \leq k \leq n} \in \cK_1^n$ and $(g_k)_{1 \leq k \leq n}$ be iid standard normal variables. We define $\gamma \in \bar \cH^0_\beta$ by 
$$
\gamma (u) = \frac 1 n \sum_{k=1}^n \PAR{ h_k . u }^{\beta} |g_k|^{\alpha}. 
$$
Assume that for all $k\geq 1$, $|h_k| \leq \eta^{-1}$. Then,  there exists a constant $c = c(\alpha,\beta)$ such that for any $0 < \delta < \beta $ and $0 \leq t \leq \eta^{-\beta} $, 
$$
 \dP \PAR{ \| \gamma - \dE \gamma \|_{1 - \beta + \delta}  \geq t } \leq   c (\eta^\beta t  )^{-\frac 1   \delta} \exp \PAR{ - c n (\eta^{\beta} t) ^{\frac 2  \delta}  } .
$$
\end{lem}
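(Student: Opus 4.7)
The proof follows the same two-step scheme as Lemma \ref{le:concnorm}. I will first establish pointwise (single-$u$) concentration of $\gamma(u)$ and of the weighted derivative $(i.u)^{1-\beta+\delta}\partial_\veps\gamma(u)$ at every fixed $u \in S_1^+$, and then promote this to a uniform-in-$u$ bound via a net argument combined with the deterministic H\"older moduli already derived inside the proof of Lemma \ref{le:gammaholder}. The only substantive change relative to Lemma \ref{le:concnorm} is the source of randomness: here the $h_k$ are deterministic and the stochasticity lies in the iid scalars $|g_k|^\alpha$, so Lemma \ref{le:concres} (a McDiarmid-type bound for Lipschitz functions of resolvent entries) must be replaced by a Bernstein-type inequality for sums of independent centered sub-exponential variables. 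Since $0 < \alpha < 2$, the random variable $|g|^\alpha$ has moment generating function finite on all of $\mathbb{R}$ and hence a finite sub-exponential norm $K = K(\alpha)$ that is independent of $n$.

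For the single-point estimates I write
\[
\gamma(u) - \dE \gamma(u) = \frac 1 n \sum_{k=1}^n a_k(u) \bigl( |g_k|^\alpha - \dE |g_k|^\alpha \bigr), \qquad a_k(u) = (h_k.u)^\beta,
\]
and use $|h_k . u| \leq \sqrt 2 |h_k| \leq \sqrt 2 \eta^{-1}$ to get $|a_k(u)| \leq c\eta^{-\beta}$. Bernstein's inequality then gives, in the relevant Gaussian regime $t \leq \eta^{-\beta}$,
\[
\dP \bigl( |\gamma(u) - \dE \gamma(u)| \geq t \bigr) \leq 2 \exp \bigl( - c\, n \eta^{2\beta} t^2 \bigr).
\]
Differentiating, $\partial_\veps \gamma(u) = \frac{\beta}{n} \sum_k (h_k.u)^{\beta-1}(h_k.\veps)|g_k|^\alpha$, and the bound $|h_k.\veps| \leq \sqrt 2\, |h_k.u|/|i.u|$ from \eqref{eq:hubound} yields $|(h_k.u)^{\beta-1}(h_k.\veps)| \leq c\eta^{-\beta}/|i.u|$. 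A second application of Bernstein then produces
\[
\dP \bigl( |(i.u)^{1-\beta+\delta}\partial_\veps(\gamma - \dE\gamma)(u)| \geq t \bigr) \leq 2 \exp \bigl( -c\, n \eta^{2\beta} |i.u|^{2\beta - 2\delta} t^2 \bigr).
\]

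For the net argument I copy the construction from the proof of Lemma \ref{le:concnorm}: fix $s_0 = c (\eta^\beta t)^{1/\delta}$ and an $s_0$-net $F \subset S_1^+$ of cardinality $\leq c/s_0$ such that $|i.u| \geq s_0/2$ for every $u \in F$. The deterministic H\"older bounds \eqref{eq:holdergamma1b}--\eqref{eq:holdergamma2}, proved there for single-term functions $u \mapsto (x.u)^\beta$ with $|x|\leq \eta^{-1}$, pass through to our averaged $\gamma$ and $\dE \gamma$ up to a multiplicative factor of $\frac{1}{n}\sum_k |g_k|^\alpha$ and $\dE |g|^\alpha$ respectively. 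On the good event $\mathcal{G} = \{\frac{1}{n}\sum_k |g_k|^\alpha \leq C\}$, whose complement has probability at most $e^{-cn}$ by a further application of Bernstein to the scalar sum $\frac{1}{n}\sum_k|g_k|^\alpha$, these extra factors are absorbed into the $\eta^{-\beta}$ factor in the H\"older bounds, so that the oscillation between any $v \in S_1^+$ and the nearest net point is at most $t/2$ both for $\gamma - \dE \gamma$ and for its weighted derivatives. A union bound combined with the two pointwise estimates above then produces an exponent of size $c n \eta^{2\beta} (s_0/2)^{2\beta - 2\delta} t^2 = c n (\eta^\beta t)^{2\beta/\delta}$. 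Since $\beta < 1$ and $\eta^\beta t \leq 1$, one has $(\eta^\beta t)^{2\beta/\delta} \geq (\eta^\beta t)^{2/\delta}$, so the derived bound is at least as strong as the stated one and the additive $e^{-cn}$ coming from $\mathcal{G}^c$ is absorbed into the prefactor. The only delicate bookkeeping point, exactly as in the proof of Lemma \ref{le:concnorm}, lies in matching the $|i.u|^{1-\beta+\delta}$ weighting in the norm $\| \cdot \|_{1-\beta+\delta}$ to the covering radius $s_0$ so that the net argument closes; the main genuinely new step is the concentration argument controlling $\mathcal{G}$, which however is immediate from the sub-exponential Bernstein bound applied to the average of $|g_k|^\alpha$.
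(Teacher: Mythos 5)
Your proposal is correct and takes essentially the same approach as the paper's own proof of Lemma \ref{le:concres2}: you replace the McDiarmid-type concentration of Lemma \ref{le:concres} by Bernstein's inequality for the sub-exponential scalars $|g_k|^\alpha$, prove pointwise concentration of $\gamma(u)$ and of the weighted derivative at fixed $u$, control the random multiplier $L = n^{-1}\sum_k |g_k|^\alpha$ on a high-probability event, and close with the same net argument over $S_1^+$ using the deterministic H\"older moduli from the proof of Lemma \ref{le:gammaholder}. (Your observation that the derived exponent $(\eta^\beta t)^{2\beta/\delta}$ dominates the stated $(\eta^\beta t)^{2/\delta}$ in the regime $\eta^\beta t\leq 1$ is also accurate; the paper simply records the weaker, more uniform bound that parallels Lemma \ref{le:concnorm}.)
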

\begin{proof}
We start by a preliminary concentration inequality. For $0 < \alpha < 2$, the variable $|g_k|^\alpha$ is sub-exponential. From Bernstein's inequality, for any $t \geq 0$, we have 
$$
\dP   \PAR{ \ABS{ \sum_k x_k |g_k|^\alpha  - \dE  \sum_k x_k |g_k|^\alpha  }  \geq t } \leq 2 \exp \PAR{ - c \frac{t^2}{\|x \|_ 2^2} \wedge  \frac{t}{\|x \|_ \infty }}. 
$$
Since $\| x \|_2 \leq \sqrt n \| x \|_\infty$,  if $\varphi(t) = t \wedge t^2$, 
\begin{equation}\label{eq:bernstein}
\dP   \PAR{ \ABS{ \sum_k x_k |g_k|^\alpha  - \dE  \sum_k x_k |g_k|^\alpha  }  \geq  n t } \leq 2 \exp \PAR{ - c  n \varphi \PAR{\frac{t}{\|x \|_ \infty}} }. 
\end{equation}
In particular, for any $u \in S_1 ^+$, 
$$
 \dP \PAR{ \ABS{ \gamma (u)  - \dE \gamma (u)} \geq t } \leq 2 \exp \PAR{ - c n \varphi(t \eta^\beta)},
$$
and for $\veps \in \{1, i\}$, from \eqref{eq:hubound2},
$$
 \dP \PAR{ \ABS{ \partial_\veps \gamma (u)  - \dE \partial_ \veps \gamma (u)} \geq t } \leq 2 \exp \PAR{ - c n \varphi (t \eta^\beta |i.u| ) }\,.
$$
We  then repeat the net argument used in Lemma \ref{le:concres}. Setting, $L = \frac 1 n \sum_k |g_k|^\alpha$, the inequalities \eqref{eq:holdergamma1}- \eqref{eq:holdergamma2} are replaced respectively by 
$$
|\gamma(u) - \gamma (v)| \leq L 2^{\beta/2} \eta^{-\beta} |u - v|^{\beta}. 
$$
and 
$$
| (i.u)^{1- \beta'} \partial_\veps \gamma(u)  - (i.v)^{1- \beta'} \partial_\veps  \gamma(v) | \leq L  \eta^{-\beta} \PAR{\frac{|u - v|}{(|i.u| \vee |i.v|)^a } }^{1- \beta'} + L \eta^{-\beta} \PAR{\frac{|u - v|}{(|i.u| \vee |i.v|)^{2 - 1/a} } }^{1- \beta}
$$
Arguing as in Lemma \ref{le:concres}, we find that for any  $t \leq  \eta^{-\beta}$, 
$$
 \dP \PAR{ \| \gamma - \dE \gamma \|_{1 - \beta + \delta}  \geq t \ell } \leq  c (\eta^\beta t  )^{-\frac 1   \delta} \exp \PAR{ - c n (\eta^{\beta} t) ^{\frac 2  \delta}  } + \dP ( L \geq \ell). 
$$
From Bernstein's inequality, for $\ell = 2 \dE |g_1|^\alpha$, $\dP ( L \geq \ell) \leq \exp ( - c n)$. 
\end{proof}

We conclude this subsection with a  perturbation inequality for the resolvent. 
\begin{lem}\label{le:pertnorm}
Let $\beta \in (0,1]$, $B,B' \in M_n(\dC)$ be hermitian matrices, $z = E + i \eta$ and $R = (B-z)^{-1}$ and $R'= (B'-z)^{-1}$.  Then,  
$$
\sum_{k =1}^n  |   R_{kk} -    R'_{kk}  |^ \beta \leq 2  n ^{1- \beta} \eta^{-\beta} \rank(B-B'). 
$$
\end{lem}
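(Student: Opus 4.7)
The plan is to exploit the second resolvent identity together with the low rank structure of the perturbation. Writing $D = B - B'$, we have $R - R' = R(B' - B)R' = -R D R'$, so $M := R - R'$ has rank at most $r := \rank(D)$ by submultiplicativity of rank. Moreover, since $B$ and $B'$ are Hermitian and $\Im z = \eta > 0$, functional calculus gives $\|R\|_{op}, \|R'\|_{op} \leq 1/\eta$, hence $\|M\|_{op} \leq 2/\eta$.

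Next I would invoke the singular value decomposition of $M$: write $M = \sum_{j=1}^{r} s_j u_j v_j^*$ where $s_j \leq 2/\eta$ are the singular values and $(u_j)$, $(v_j)$ are orthonormal families in $\dC^n$. Extracting the diagonal entries yields
\[
M_{kk} = \sum_{j=1}^{r} s_j (u_j)_k \overline{(v_j)_k}.
\]
Since $\beta \in (0,1]$, the map $t \mapsto t^\beta$ is subadditive on $\R_+$, so $|M_{kk}|^\beta \leq \sum_{j=1}^r s_j^\beta |(u_j)_k|^\beta |(v_j)_k|^\beta$.

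Summing in $k$ and applying Cauchy-Schwarz,
\[
\sum_{k=1}^n |(u_j)_k|^\beta |(v_j)_k|^\beta \leq \Bigl(\sum_{k=1}^n |(u_j)_k|^{2\beta}\Bigr)^{1/2}\Bigl(\sum_{k=1}^n |(v_j)_k|^{2\beta}\Bigr)^{1/2}.
\]
Because $u_j$ is a unit vector and $\beta \leq 1$, H\"older's inequality with conjugate exponents $(1/\beta, 1/(1-\beta))$ yields $\sum_k |(u_j)_k|^{2\beta} \leq n^{1-\beta} \|u_j\|_2^{2\beta} = n^{1-\beta}$, and likewise for $v_j$. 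Combining these estimates gives
\[
\sum_{k=1}^n |M_{kk}|^\beta \leq \sum_{j=1}^r s_j^\beta \, n^{1-\beta} \leq r \,(2/\eta)^\beta\, n^{1-\beta} \leq 2 \, n^{1-\beta}\eta^{-\beta} r,
\]
since $2^\beta \leq 2$ for $\beta \leq 1$. This is precisely the claimed bound.

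There is no real obstacle in this argument; the two ingredients to double-check are the operator norm bound on the resolvents (immediate from the spectral theorem for Hermitian matrices) and the H\"older inequality $\sum_k |w_k|^{2\beta} \leq n^{1-\beta}$ for unit vectors $w$, which is the mechanism that produces the $n^{1-\beta}$ factor. The use of the SVD together with the subadditivity of $t \mapsto t^\beta$ is the key structural step that extracts a factor of $r$ rather than $n$ from the rank information.
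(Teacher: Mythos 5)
Your proof is correct and follows essentially the same route as the paper: resolvent identity, rank and operator norm bounds, SVD, subadditivity of $t\mapsto t^\beta$, and a H\"older-type bound to extract the $n^{1-\beta}$ factor. The only cosmetic difference is that you split the final estimate into Cauchy--Schwarz followed by H\"older with exponents $(1/\beta,1/(1-\beta))$, whereas the paper applies H\"older directly to the product $\sum_k |\langle u_j,e_k\rangle|^\beta|\langle v_j,e_k\rangle|^\beta$ in one step; the two organizations are equivalent.
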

\begin{proof}
It is a variant of the proof of {\cite[Equation (91)]{BG}}. The resolvent identity asserts that 
  $
 M =  R - R' = R ( B' - B ) R'.
  $
 It follows that  $r = \rank(M)\leq \mathrm{rank}(B - B')$. We notice also that $ \|M \| \leq 2 \eta^{-1}$. Hence, in the singular value decomposition of $M = U D V$, at most $r$ entries of $D = \mathrm{diag} ( s_1, \cdots, s_n)$ are non zero and they are bounded by $\| M \|$.  We denote by $u_1, \cdots, u_r$ and $v_1, \cdots, v_r$ the associated orthonormal vectors so that 
  $$
  M = \sum_{i = 1}^r s_i u_i v_i ^ * ,
  $$
and
  \begin{eqnarray*}
  \left| R _{kk}  -    R'_{kk}  \right|  = |M_{kk}|  =     \left| \sum_{i=1}^r s_i \langle u_i , e_k \rangle  \langle v_i , e_k \rangle   \right|  \leq \| M \|   \sum_{i=1}^r |\langle u_i , e_k \rangle | | \langle v_i , e_k \rangle |. 
  \end{eqnarray*}
 From the subadditivity of $x \mapsto |x|^ {\beta}$, we get 
  \begin{eqnarray*}
  \left| R _{kk}  -    R'_{kk}  \right|^ \beta   \leq \| M \| ^ \beta  \sum_{i=1}^r |\langle u_i , e_k \rangle |^ \beta | \langle v_i , e_k \rangle |^ \beta. 
  \end{eqnarray*}
  Finally,  from H\"older inequality, 
   \begin{eqnarray*}
    \sum_{k=1}^n    \left| R _{kk}  -    R'_{kk}  \right| ^\beta \leq \|M \|^\beta \sum_{i= 1} ^r  n^{1 - \beta} \PAR{   \sum_{k=1}^n   |\langle u_i , e_k \rangle |^{2 } }^{\beta /2} \PAR{  \sum_{k=1}^n | \langle v_i , e_k \rangle |^{2 }  }^{\beta / 2} =  r \|M \|^\beta n^{1- \beta}. 
    \end{eqnarray*}
  It completes the proof. \end{proof}

\subsection{Properties of $\alpha$-stable variables}

In this subsection, we let $(X_k)_{1 \leq k \leq n}$ be iid  symmetric $\alpha$-stable random variables with distribution $\stab_\alpha(0,\sigma)$ for some $\sigma >0$ and $0 \leq \alpha < 2$. More precisely, for all $t \in \bR$,
\begin{equation}
\label{eq:stablefourier}
\bE \exp (it X)  = \exp\left[  - \sigma^\alpha | t|^\alpha ~\right]
\end{equation}

Our first lemma is a deviation inequality for quadratic forms with heavy tails (in a slightly modified form). 
\begin{lem}[{\cite[Lemma 3.2]{BG}}]\label{le:offdiag}
Let $B \in M_n (\dC)$. For any $0 < \alpha < 2$, there exists a constant $c = c(\alpha) >0$ such that for $n \geq 2$ and $t \geq 0$,
$$
\bP \left( \left| n ^{-\frac 2 \alpha}  \sum_{1 \leq k \ne \ell \leq n } X_{k} X_{\ell}  B_{k\ell}\right| \geq  \sigma^2\sqrt{ \frac {\Tr  ( BB^* ) } { n^2 } }  t \right)  \leq c t^{ - \alpha }   \log \left(n  ( t  \vee 2)  \right)\log      (t \vee 2)   . 
$$
\end{lem}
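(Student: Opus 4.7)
The plan is to reduce to \cite[Lemma 3.2]{BG} after the trivial rescaling $X_k \mapsto X_k/\sigma$, which brings the present setup to the normalized case $\sigma=1$ under which the BG lemma is stated; the $\sigma^2$ factor in the right-hand side of our statement then comes directly from this rescaling, since each $X_k X_\ell$ gets a $\sigma^2$. For completeness I will sketch the skeleton of the underlying truncation argument so as to convince the reader that the logarithmic factors are unaffected.

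First I would fix a truncation level $M=M(n,t,\sigma)$ to be chosen later and split $X_k = Y_k + Z_k$ with $Y_k = X_k \IND_{|X_k| \leq M}$. The quadratic form decomposes as
$$ n^{-2/\alpha} \sum_{k \ne \ell} X_k X_\ell B_{k\ell} \;=\; S_{YY} + S_{YZ} + S_{ZY} + S_{ZZ}.$$
For the bulk term $S_{YY}$, the symmetry of $Y_k$ kills the mean, and only index pairings $\{k,\ell\}=\{k',\ell'\}$ contribute to $\dE |S_{YY}|^2$, yielding $\dE|S_{YY}|^2 \leq 2 n^{-4/\alpha}(\dE Y_1^2)^2 \Tr(BB^*)$. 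Using $\dP(|X_1|>r) \sim c\sigma^\alpha r^{-\alpha}$ to estimate $\dE Y_1^2 \lesssim \sigma^\alpha M^{2-\alpha}/(2-\alpha)$ (recall $\alpha<2$) and applying Chebyshev will then control $\dP(|S_{YY}|\geq u/4)$ for the target threshold $u = \sigma^2 t\sqrt{\Tr(BB^*)/n^2}$.

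To handle $S_{YZ}$, $S_{ZY}$ and $S_{ZZ}$, I would exploit that the number of $k$ with $Z_k \neq 0$ is binomial with mean $\lesssim n\sigma^\alpha M^{-\alpha}$, so with controlled probability there is at most one large index. Since $Z_k$ itself retains a heavy tail, I would further decompose into dyadic annuli $\{2^j M \leq |X_k| < 2^{j+1}M\}$, $j\geq 0$, and union-bound over scales; this is the mechanism producing the $\log(t \vee 2)$ factor. Conditionally on the locations and magnitudes of the offending indices, the residual $S_{YZ}$ becomes a linear form in the $Y_k$'s which I would again bound by Chebyshev using $\sum_k |B_{k\ell^*}|^2 \leq \Tr(BB^*)$, and a union bound over the identity of the large index $\ell^*$ yields the remaining $\log(n(t\vee 2))$ factor.

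The threshold $M$ is finally chosen -- essentially $M \asymp \sigma n^{1/\alpha} \sqrt{t}$, up to logs -- so as to balance the bulk estimate and the tail probability, both producing contributions of order $t^{-\alpha}$. The hard part is the careful bookkeeping of the dyadic and union-bound losses and the need to handle the conditional tails of $Z_k$ sharply enough to avoid extra polynomial corrections; these difficulties are precisely what makes a direct appeal to \cite[Lemma 3.2]{BG} much cleaner than redoing the proof in detail, and the present statement follows from that reference modulo the rescaling by $\sigma$ discussed above.
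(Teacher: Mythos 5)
Your reduction to \cite[Lemma 3.2]{BG} via the rescaling $X_k \mapsto X_k/\sigma$ is exactly how the paper handles this: the lemma is imported verbatim from \cite{BG} (the ``slightly modified form'' referred to in the surrounding text is precisely the appearance of the $\sigma^2$ factor you account for), and no proof is given here. The paper therefore takes the same route as your first paragraph; the truncation sketch you add for completeness is not part of the paper's argument, so it need not be scrutinized for the present purpose.
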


We also use the following identity (which is special case of a more general distributional identity).

\begin{lem}[{\cite[Corollary B.2]{BG}}]\label{cor:formulealice}
Let $(h_k)_{1 \leq k \leq n} \in \cK^n_1$. Then 
$$
\bE \exp \left(  - \sum_{k=1}^n h_k X_k^2 \right) = \bE \exp \left( -2^{\frac \alpha 2} \sigma^\alpha \sum_{k=1}^n h_k ^{\frac \alpha 2} |g_k|^\alpha \right),
$$ 
where $ (g_1, \cdots, g_n)$ is a standard gaussian vector $N( 0 , I ) $. \end{lem}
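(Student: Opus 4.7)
The plan rests on the classical subordination representation of a symmetric $\alpha$-stable law. Namely, for $\alpha\in(0,2)$ a variable $X\sim\mathrm{Stab}_\alpha(0,\sigma)$ admits the representation $X\stackrel{d}{=}\sqrt{W}\,g$, where $g\sim\mathcal N(0,1)$ is independent of a positive $\alpha/2$-stable subordinator $W$ whose Laplace transform is $\bE\exp(-\lambda W)=\exp(-2^{\alpha/2}\sigma^\alpha\lambda^{\alpha/2})$ for $\lambda\geq 0$. The choice of the constant is pinned down by matching Fourier transforms: conditioning on $W$,
$$\bE e^{itX}=\bE e^{-t^2 W/2}=\exp\bigl(-2^{\alpha/2}\sigma^\alpha(t^2/2)^{\alpha/2}\bigr)=\exp(-\sigma^\alpha|t|^\alpha),$$
which recovers \eqref{eq:stablefourier}. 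The first step is thus to introduce iid copies $(W_k,g_k)_{1\leq k\leq n}$ built from this representation so that $X_k\stackrel{d}{=}\sqrt{W_k}\,g_k$ with $(W_k)$ independent from $(g_k)$.

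Next, I would rewrite the left-hand side as
$$\bE\exp\Bigl(-\sum_k h_k X_k^2\Bigr)=\bE\,\bE\Bigl[\exp\bigl(-\sum_k (h_k g_k^2)\,W_k\bigr)\,\Big|\,g_1,\dots,g_n\Bigr],$$
and use the independence of the $W_k$ together with the Laplace transform recalled above. Since $\cK_1=\{\arg\in[-\pi/2,\pi/2]\}$ is exactly the half-plane $\{\Re(\lambda)\geq 0\}$ and $g_k^2\geq 0$, each quantity $h_k g_k^2$ lies in $\cK_1$. This is where the second step requires care: the identity $\bE e^{-\lambda W}=\exp(-2^{\alpha/2}\sigma^\alpha\lambda^{\alpha/2})$ must be extended from $\lambda\geq 0$ to $\lambda\in\cK_1$. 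I would do this by noting that both sides are continuous on $\cK_1$ and analytic on its interior (the left side by dominated convergence, using $|e^{-\lambda W}|=e^{-\Re(\lambda)W}\leq 1$; the right side using the principal branch $\lambda^{\alpha/2}$, which is well defined on $\cK_1$ and again has nonnegative real part since $\alpha/2<1$), so the two analytic extensions coincide by the identity theorem. Applying this complex Laplace transform inside the conditional expectation gives
$$\bE\bigl[\exp(-h_k g_k^2 W_k)\,\big|\,g_k\bigr]=\exp\bigl(-2^{\alpha/2}\sigma^\alpha (h_k g_k^2)^{\alpha/2}\bigr)=\exp\bigl(-2^{\alpha/2}\sigma^\alpha h_k^{\alpha/2}|g_k|^\alpha\bigr),$$
where the last equality uses $(h_k g_k^2)^{\alpha/2}=h_k^{\alpha/2}|g_k|^\alpha$, valid because $g_k^2\geq 0$ and $h_k^{\alpha/2}$ is defined via the principal branch on $\cK_1$.

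The final step is to multiply these conditional expectations over $k$, which is legitimate by the conditional independence of the $W_k$ given $(g_k)$, and take the outer expectation over $(g_k)$. This produces exactly the right-hand side $\bE\exp\bigl(-2^{\alpha/2}\sigma^\alpha\sum_k h_k^{\alpha/2}|g_k|^\alpha\bigr)$, completing the proof. The only nontrivial step is the analytic-continuation argument for the Laplace transform of $W$ on $\cK_1$; once the correct branch of $\lambda^{\alpha/2}$ is fixed, the rest is a bookkeeping exercise in conditioning. I do not expect any difficulty from convergence issues: $|e^{-h_k X_k^2}|\leq 1$ throughout since $\Re(h_k)\geq 0$, so Fubini applies freely.
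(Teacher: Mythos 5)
Your proof is correct, and the subordination approach you use is exactly the standard route one expects (and, implicitly, what Corollary~B.2 of \cite{BG} rests on): write $X_k\stackrel{d}{=}\sqrt{W_k}\,g_k$ with $W_k$ a positive $\alpha/2$-stable variable independent of the Gaussian $g_k$, condition on the $g_k$, apply the one-sided stable Laplace transform at the complex argument $h_k g_k^2\in\cK_1$, and simplify $(h_k g_k^2)^{\alpha/2}=h_k^{\alpha/2}|g_k|^\alpha$. The one step that needs the care you gave it is the extension of $\lambda\mapsto\bE e^{-\lambda W}=\exp(-2^{\alpha/2}\sigma^\alpha\lambda^{\alpha/2})$ from $\lambda\ge 0$ to the closed right half-plane via analyticity on the open half-plane and continuity up to the boundary; your identity-theorem argument handles this correctly, and the normalization check recovering $\bE e^{itX}=e^{-\sigma^\alpha|t|^\alpha}$ pins the constant $2^{\alpha/2}\sigma^\alpha$ unambiguously.
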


In  the proof of Proposition \ref{prop:afpi}, we shall use the following key  corollary.  

\begin{cor}\label{cor:stg}
Let $(h_k)_{1 \leq k \leq n} \in \cK^n_1$ and $z \in \dC_+$. Define the function $f \in \bar \cH^0_{\alpha/2}$,   
\begin{eqnarray*}
f(u) & =&\Gamma \left( 1 - \frac \alpha 2 \right) \bE \left(    \left( -iz +n^{-\frac 2 \alpha} \sum_{k=1}^n X_{k}^2 h_k \right)^{-1} . u  \right)^{\frac \alpha 2 }.
\end{eqnarray*}
and for $p\ge 0$,  the scalars
$$\zeta  = \bE \left|    -iz +n^{-\frac 2 \alpha}  \sum_{k=1}^n X_{k}^2 h_k  \right|^{-p} \AND \xi  =  \bE \left(    -iz +n^{-\frac 2 \alpha}   \sum_{k=1}^n X_{k}^2 h_k  \right)^{-p}.
$$
We have $f = \dE G_z ( Z)$, $\zeta = \dE r_{p,z} (Z)$ and $\xi = \dE s_{p,z} (Z')$ with 
$$Z  (u) = 2^{\frac \alpha 2} \sigma^{\alpha} \times \frac{1}{n} \sum_{k=1}^n (h_k.u)^{\frac \alpha 2} |g_k|^{\alpha}  \AND Z'= 2^{\frac \alpha 2} \sigma^{\alpha} \times \frac{1}{n} \sum_{k=1}^n h_k^{\frac \alpha 2} |g_k|^{\alpha} ,$$
where $(g_1, \cdots, g_n)$ is a standard Gaussian vector $N(0,I)$.  
\end{cor}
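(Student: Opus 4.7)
The plan is to establish all three identities by the same scheme: express the quantity of interest via \eqref{eq:feo} or a suitable variant as an absolutely convergent integral, swap expectation and integration by Fubini, and then invoke Lemma \ref{cor:formulealice} to convert the quadratic form in the stable variables $X_k$ into a function of the Gaussians $g_k$. Set $w = -iz + n^{-2/\alpha}\sum_k X_k^2 h_k$ and $H = w + iz$. Since $\Re w \ge \Im z > 0$ almost surely, all uses of \eqref{eq:feo} are legitimate.

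For $\xi$, I would apply \eqref{eq:feo} directly to $w^{-p}$ and split $e^{-rw} = e^{irz} e^{-rH}$. Lemma \ref{cor:formulealice}, applied with the parameter $h_k$ replaced by $rn^{-2/\alpha} h_k$, gives $\E e^{-rH} = \E e^{-r^{\alpha/2} Z'}$, whence $\xi = \E s_{p,z}(Z')$ after a second use of Fubini. For $\zeta$, I would first derive
\[
|w|^{-p} = \frac{2^{1-p/2}}{\Gamma(p/2)^2} \int_0^{\pi/2} (\sin 2\theta)^{p/2-1}\, d\theta \int_0^\infty \rho^{p-1} e^{-\rho\, w.e^{i\theta}}\, d\rho
\]
by writing $|w|^{-p} = w^{-p/2} \bar w^{-p/2}$, applying \eqref{eq:feo} twice, and then passing to polar coordinates $(s,t) = \rho(\cos\theta, \sin\theta)$. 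The critical observation is that $sw + t\bar w = \rho\, w.e^{i\theta}$ by the very definition of $h.u$; this is what matches the form of $r_{p,z}$ once Lemma \ref{cor:formulealice} is applied at parameter $\rho n^{-2/\alpha}(h_k.e^{i\theta})$.

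For $f$, the central algebraic step is the identity $w^{-1}.u = w.\check u / |w|^2$, which one verifies from $w^{-1} = \bar w / |w|^2$ and the bilinearity of $h.u$, and which yields $(w^{-1}.u)^{\alpha/2} = (w.\check u)^{\alpha/2} / |w|^\alpha$. Combining the identity $x^{\alpha/2} = \frac{\alpha/2}{\Gamma(1-\alpha/2)} \int_0^\infty y^{-\alpha/2-1}(1 - e^{-xy})\, dy$ (valid for $\Re x > 0$) applied at $x = w.\check u$ with the polar representation of $|w|^{-\alpha}$ produces a triple integral in $(\theta, y, \rho)$; after the rescaling $y \mapsto \rho y$, both surviving exponentials take the form $e^{-\rho w.v}$ with $v \in \{e^{i\theta},\, e^{i\theta} + y\check u\}$, so Lemma \ref{cor:formulealice} converts them simultaneously into $e^{-\rho^{\alpha/2} Z(v)}$. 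Comparing constants with $c_\alpha$ and matching the resulting triple integral with \eqref{defF}--\eqref{defG} then identifies the answer as $c_\alpha\, \E F_{-iz}(Z)(\check u) = \E G_z(Z)(u)$.

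The main obstacle is bookkeeping rather than analysis: one must choose the integral representations so that the exponent factors cleanly into a deterministic $-iz$ part and a random $H$ part compatible with the bilinearity of $h.u$, and the rescaling $y \mapsto \rho y$ is what aligns the resulting triple integral with the specific form of $F_{-iz}$ in \eqref{defF}. Absolute integrability in the $f$-case is secured by the bound $y^{-\alpha/2-1}|1 - e^{-yx}| \le C y^{-\alpha/2-1} \min(1, |x| y)$, which is integrable on $(0,\infty)$ for $\alpha \in (0,2)$; the decay in $\rho$ is provided by $\Re((-iz).e^{i\theta}) = \Im z \cdot (\cos\theta + \sin\theta) > 0$.
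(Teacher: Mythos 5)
Your proposal is correct and follows essentially the same route as the paper: the same identity $w^{-1}.u = w.\check u/|w|^2$, the same two integral representations (the polar form for $|w|^{-2\beta}$ and the Gamma-type formula for $w^{\beta}$), the same rescaling $y\mapsto \rho y$ to align with \eqref{defF}, and the same final appeal to Lemma~\ref{cor:formulealice}. The only difference is that you spell out the Jacobian computation and the absolute-integrability bound a bit more explicitly than the paper does.
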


\begin{proof}
The proof is again essentially contained in \cite{BG}, we reproduce it. We set $h = -iz $, $\Re(h) > 0$. We have 
$$
f(u) = 
 \Gamma \left( 1 - \frac \alpha 2 \right) \bE \left(  \frac{ h. \check u + n^{-\frac 2 \alpha}  \sum_{k=2}^n X^2_{1k} h_k. \check u  }{ \left| h +n^{-\frac 2 \alpha}  \sum_{k=2}^n X_{1k}^2 h_k \right|^2 } \right)^{\frac \alpha 2 } .
$$
We use the formulas, for all $w \in \cK_1$, $\beta > 0$,
\begin{eqnarray*}
| w |^{-2 \beta} =   ( w )^{-\beta}  ( \bar w )^{-\beta}   & = & \Gamma ( \beta ) ^{-2} \int _{ [ 0,\infty) ^2} dx dy \, x^{\beta - 1}  y^{\beta - 1}  e^{ - x w - y \bar w} \\
& = & \Gamma ( \beta ) ^{-2} 2^{ 1- \beta}  \int_0 ^ {\frac \pi  2} d\theta  \sin ( 2 \theta) ^{ \beta - 1}  \int_0 ^ \infty dr  \, r^{2\beta - 1}  e^{ -r  w. e^{ i \theta}} .
\end{eqnarray*}
and for $0 < \beta <  1$,
\begin{eqnarray*}
 w ^{\beta}  & = & \beta  \Gamma (  1- \beta )^{-1} \int _{0}^\infty dx  \, x^{-\beta - 1}  (1 - e^{ -x w} ). 
 \end{eqnarray*}
With $h = -iz$, we find that $f( u) $ is equal to 
 \begin{align*}
& c_\alpha   \int_0 ^ {\frac \pi  2} d\theta  \sin ( 2 \theta) ^{ \frac \alpha 2 - 1} \int _{0}^\infty dx  \,  x^{-\frac \alpha 2 - 1} \\
& \quad \quad \times \int_0 ^ \infty dr  \, r^{\alpha - 1}  \bE \left( e^ { - r h. e^{ i \theta}  - n^{-\frac 2 \alpha}  \sum X_k^2  r  h_k . e^{ i \theta}  } - e^ { -   h. ( r e^{ i \theta} + x \check u ) -  n^{-\frac 2 \alpha}  \sum X^ 2_k h_k. ( r  e^{ i \theta} + x  \check u ).  } \right). 
\end{align*}
The above integrals are absolutely integrable since  $\Re(h) >0$. It remains to perform the change of variable $x = r y$ and use Corollary \ref{cor:formulealice}.

Similarly, with $h = -iz$,the above formula for $|w|^{-2 \beta}$ with $\beta  = p/2$ asserts that 
$$
\zeta  =  \Gamma ( p/2 ) ^{-2} 2^{ 1 - p/2}  \int_0 ^ {\frac \pi  2} d\theta   \sin ( 2 \theta) ^{p/2 -1} \int_0 ^ \infty dr  r^{p-1} \dE e^ { - r h. e^{ i \theta}  - n^{-\frac 2 \alpha}  \sum X_k^2  r  h_k . e^{ i \theta}  }  .
$$
We then use Corollary \ref{cor:formulealice} and that $\Gamma(1/2)  = \sqrt \pi$. The proof of the last statement is identical. \end{proof}

The inverse of a non-negative $\alpha/2$-stable random variable has a light tail.
\begin{lem}[{\cite[Lemma B.3]{BG}}]\label{le:inversetailS}
Let $S$ be a non-negative $\alpha/2$-stable variable with Laplace transform, $t \geq 0$, $\dE \exp ( - t S) = \exp ( - t^{\alpha/2} )$. Then, there exists $c >0$ such that $\dE \exp ( c S^{ - \alpha/ (2 - \alpha)}) < \infty$. In particular, for some $C >0$, for any $t \geq 1$, $\dE S^{-t} < (C t^{\alpha/(2-\alpha)} )^t$.  
\end{lem}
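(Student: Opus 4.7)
The plan is to get an explicit formula for $\dE S^{-p}$ via the Laplace transform, estimate it with Stirling, and then sum a Taylor series to obtain the exponential moment.

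First, I would use the identity \eqref{eq:feo}, $x^{-p} = \Gamma(p)^{-1} \int_0^\infty t^{p-1} e^{-tx}\,dt$ valid for $\Re(x) > 0$ and $p > 0$, applied to $x = S \geq 0$. Taking expectations and interchanging with Fubini (both integrands are non-negative) gives
$$
\dE S^{-p} = \frac{1}{\Gamma(p)} \int_0^\infty t^{p-1} \dE e^{-tS}\,dt = \frac{1}{\Gamma(p)} \int_0^\infty t^{p-1} e^{-t^{\alpha/2}}\,dt.
$$
The substitution $u = t^{\alpha/2}$ then turns this last integral into a Gamma function: we obtain
$$
\dE S^{-p} = \frac{2}{\alpha\, \Gamma(p)} \, \Gamma\!\left( \frac{2p}{\alpha} \right).
$$

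Next, I would apply Stirling's formula $\Gamma(x) \sim \sqrt{2\pi/x}\, (x/e)^x$ to both $\Gamma(2p/\alpha)$ and $\Gamma(p)$. The ratio simplifies (modulo polynomial factors in $p$) to
$$
\frac{\Gamma(2p/\alpha)}{\Gamma(p)} \;\le\; C_0^{\,p}\; p^{\,p(2-\alpha)/\alpha}
$$
for some constant $C_0 = C_0(\alpha)$ and all $p \geq 1$. This already yields the moment estimate of the form $\dE S^{-p} \le (C p^{(2-\alpha)/\alpha})^p$ for a suitable constant $C$, which is the second (quantitative) statement of the lemma.

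Finally, for the exponential moment, set $\beta = \alpha/(2-\alpha)$ and expand
$$
\dE \exp\!\left( c\, S^{-\beta} \right) = \sum_{k \geq 0} \frac{c^k}{k!}\, \dE S^{-k\beta}.
$$
Applying the moment bound with $p = k\beta$ gives $\dE S^{-k\beta} \le (C (k\beta)^{1/\beta})^{k\beta}$, and since $(k\beta)^{k\beta \cdot (1/\beta)} = (k\beta)^{k}$, we get $\dE S^{-k\beta} \le D^{k}\, (k\beta)^k$ for some constant $D=D(\alpha)$. Dividing by $k! \sim \sqrt{2\pi k}\,(k/e)^k$ and summing, the $k^k$ factors cancel and we are left with a geometric series in $k$ with ratio $cD\beta e$ (times polynomial corrections), which converges for any $c < (D\beta e)^{-1}$. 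This yields the first claim.

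I expect the only delicate step to be the uniform control of the Stirling asymptotics across all $p \geq 1$ (rather than just in the limit), which is standard but requires absorbing the polynomial prefactors into the constant $C$. Everything else is a direct computation from the Laplace transform formula.
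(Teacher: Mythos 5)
Your proof is correct, clean, and takes the natural direct route: the Laplace transform of $S$ combined with the Gamma integral \eqref{eq:feo} gives the closed form $\dE S^{-p} = \frac{2}{\alpha\Gamma(p)}\Gamma(2p/\alpha)$, Stirling then gives the moment bound, and the Taylor expansion of the exponential converts the moment bound into the exponential moment. All the steps (Fubini, the change of variable $u = t^{\alpha/2}$, the cancellation $(k\beta)^{(1/\beta)\cdot k\beta} = (k\beta)^k$, and the comparison with $k!$) check out. The present paper does not reprove this lemma — it quotes it as \cite[Lemma B.3]{BG} — so there is no in-text proof to compare against, but your route is the standard one.

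One thing you should flag rather than silently pass over: the exponent you derive, $\dE S^{-p} \leq (C\,p^{(2-\alpha)/\alpha})^p$, is the \emph{reciprocal} of the exponent printed in the lemma, which reads $\dE S^{-t} < (C\,t^{\alpha/(2-\alpha)})^t$. For $\alpha \neq 1$ these are genuinely different, and for $\alpha < 1$ the printed exponent $\alpha/(2-\alpha) < 1 < (2-\alpha)/\alpha$ would make the bound \emph{stronger} than what the explicit formula $\frac{2}{\alpha\Gamma(p)}\Gamma(2p/\alpha)$ allows — your computation shows it is false in that regime. The printed bound is a typo: deriving the moment estimate from the exponential moment $\dE e^{cS^{-\beta}} < \infty$ with $\beta = \alpha/(2-\alpha)$ by optimizing $x^{-p}e^{-cx^{-\beta}}$ also gives $\dE S^{-p} \leq (C\,p^{1/\beta})^p = (C\,p^{(2-\alpha)/\alpha})^p$, matching your result. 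The typo is harmless for the paper, since the only quantitative use of the moment bound is to show $(\dE S^{p(\alpha-1)})^{1/p} \leq (\log n)^{c}$ for $p = \log n$ in the proof of Proposition \ref{prop:afpi}, which holds under either exponent. You should nonetheless state explicitly that your computation yields $(2-\alpha)/\alpha$, note the discrepancy with the statement, and observe that this is what the downstream argument actually needs.
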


\subsection{Proof of Proposition  \ref{prop:afpi}}

We first introduce the variable  which depend  on $n$ and $z \in \dC_+$,
$$
M_z = \frac {\tr R^{(1)} (z){R^{(1)}}(z)^*} {n^2}   = \frac{ \tr \Im (R^{(1)}(z) )}{n^2 \Im z}.
$$
We recall that $\gamma_z \in \bar \cH^0_{\alpha/2}$ was defined for $u \in S_1^+$ by
$$
\gamma_z (u) = \Gamma \PAR{1 - \frac \alpha 2} \times \frac 1 n \sum_{k=1}^n \PAR{ -i R_{kk}(z) . u }^{\frac \alpha 2}. 
$$
For short notation, as in the statement of Proposition \ref{prop:afpi}, their expectations are denoted by 
$$
\bar M_z = \dE M_z \;  \AND  \bar \gamma_z = \dE \gamma_z. 
$$ 
Finally, we set $\eta = \Im(z)$ and
$$
\widetilde \gamma_z =   \bar \gamma_z (1)  =  \Gamma ( 1 - \frac \alpha 2) \dE  ( -i R_{kk}(z) )^{\frac \alpha 2}.
$$

We will use that the resolvent $R$ and $R^{(1)}$ are close. For example, 
we observe that 
\begin{equation}\label{eq:pertM}
\ABS{ M_z - \frac {  \tr \Im (R(z) )} {n^2 \eta}} \leq \frac{3}{n^2 \eta^2} \AND \ABS{ \bar M_z - \frac {\dE \Im R_{11}(z) } {n \eta}  } \leq \frac{3}{n^2 \eta^2},
\end{equation}
(we apply Lemma  \ref{le:pertnorm} to the matrix $B'$ given by $B'_{ij} = B_{ij} \IND ( i , j \geq 2)$. Its resolvent $R'_{ij}$ coincides with $R^{(1)}_{ij}$ for all $i,j \geq 2$ and $R'_{11} = -1/z$).   We will also need a technical lemma. 

\begin{lem}\label{le:momo}
If $h \in \cK_1$ and $0 < \beta < 1$, 
$
 \Re (h^\beta) \geq   (\Re h)^\beta. 
$
\end{lem}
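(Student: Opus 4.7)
The plan is to reduce the inequality to a trigonometric one and then verify it by a concavity argument.

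First I would write $h = r e^{i\theta}$ with $r \geq 0$ and $\theta \in [-\pi/2, \pi/2]$ (which is permitted since $h \in \cK_1$). Then $\Re h = r\cos\theta \geq 0$ and $h^\beta = r^\beta e^{i\beta\theta}$, so $\Re(h^\beta) = r^\beta \cos(\beta\theta)$. The claim becomes
\[
r^\beta \cos(\beta\theta) \geq r^\beta (\cos\theta)^\beta,
\]
which after dividing by $r^\beta$ (the case $r=0$ being trivial) and using that cosine is even, reduces to proving
\[
\cos(\beta\theta) \geq (\cos\theta)^\beta \qquad \text{for } \theta \in [0,\pi/2],\ \beta \in (0,1).
\]
The case $\theta = \pi/2$ is trivial (both sides are $\cos(\beta\pi/2) \geq 0$), so I may assume $\theta \in [0,\pi/2)$, where $\cos\theta > 0$ and taking logarithms is legitimate.

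Define
\[
f(\beta) = \log\cos(\beta\theta) - \beta \log\cos\theta, \qquad \beta \in [0,1].
\]
Clearly $f(0) = 0$ and $f(1) = 0$. Compute $f''(\beta) = -\theta^2 \sec^2(\beta\theta) \leq 0$, so $f$ is concave on $[0,1]$. A concave function which vanishes at both endpoints of $[0,1]$ is non-negative on $[0,1]$, hence $f(\beta) \geq 0$, which is exactly the required inequality.

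There is no real obstacle here: the only thing to check carefully is that $h \in \cK_1$ really corresponds to $\arg h \in [-\pi/2,\pi/2]$ with the convention fixed earlier in the paper (so that $h^\beta$ and $(\Re h)^\beta$ are unambiguously defined and the reduction to a trigonometric inequality is valid). The concavity step is one line.
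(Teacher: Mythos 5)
Your proof is correct and reduces to the same trigonometric inequality $\cos(\beta\theta)\geq(\cos\theta)^\beta$ for $\theta\in[0,\pi/2]$, $\beta\in(0,1)$, as the paper's, but the final step is genuinely different. The paper fixes $\beta$ and considers the function $g(x)=x^{-1}\log\cos(x)$, showing it is decreasing on $(0,\pi/2)$ (via $g'(0)<0$ and $g''\leq 0$), which yields $g(\beta\theta)\geq g(\theta)$ and hence the inequality. You instead fix $\theta$ and view $f(\beta)=\log\cos(\beta\theta)-\beta\log\cos\theta$ as a function of $\beta$: the single computation $f''(\beta)=-\theta^2\sec^2(\beta\theta)\leq 0$ together with $f(0)=f(1)=0$ gives concavity and hence $f\geq 0$ on $[0,1]$. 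Your route is a bit cleaner — concavity plus zero boundary values is a one-liner, whereas the paper's monotonicity argument requires checking two facts about $g$. You are also slightly more careful in isolating the endpoint $\theta=\pi/2$ before taking logarithms, a point the paper glosses over. Both proofs are short and elementary; yours has the virtue of making the role of $\beta\in(0,1)$ (interpolation between the two trivial endpoints $\beta=0$ and $\beta=1$) completely transparent.
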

\begin{proof}
We may assume without loss of generality that $|h| = 1$ and $\Im (h) \geq 0$. Then the lemma is equivalent to the inequality, for any $x \in [0,\pi/2]$,   
\begin{equation*}\label{eq:momo}
\cos ( \beta x) \geq (\cos x)^{\beta}. 
\end{equation*}
which can easily be checked by showing that $f(x)=x^{-1}\log \cos(x)$ is decreasing (for instance by showing that $f'(0)<0$ and $f''(x)\le 0$).
\end{proof}

As explained in subsection \ref{subsec:exfpe}, the approximate fixed point for $\gamma$ will come from Schur's complement formula \eqref{eq:schurcf0}. We recall that
\begin{eqnarray}
R_{11}(z) & = & - \PAR{ ih   + i Q_z + T_z }^{-1},  \label{eq:schurcf}
\end{eqnarray}
where we have set $h  = -i z \in \cK_1$, $H_{k}=-i R^{(1)}_{kk}(z)$ and
\begin{eqnarray*}
Q_z & = & n^{-\frac 2 \alpha}  \sum_{2 \leq k   \leq n} X^2_{1k}  H_k  \\
T_z & = &  n^{-\frac 1 \alpha}  X_{11} + n^{-\frac 2 \alpha}  \sum_{2 \leq k \ne \ell \leq n} X_{1k} X_{1 \ell} R_{k\ell}^{(1)} (z).
\end{eqnarray*}
We  introduce the function in $\cH_{\alpha/2}$, 
\begin{eqnarray*}
I_z(u)  & = & \Gamma \left( 1 - \frac \alpha 2 \right) \bE \left(    \left( h + Q_z \right)^{-1} . u  \right)^{\frac \alpha 2 } 
\end{eqnarray*}
We will often drop the explicit dependence in $z$. 
Finally, $\cF$ is the $\sigma$-algebra generated by the variables $(X_{ij})$, $i, j \geq 2$. Note that $Q$ and $T$ are $\cF$-measurable and $(X_{1k})$, $k \geq 1$, is independent of $\cF$.

The proof of Proposition \ref{prop:afpi} is divided in four steps. 
\paragraph{Step 1 :  from $I_z$ to $G_z ( \bar \gamma)$. }  From Corollary \ref{cor:stg}, we have 
\begin{equation}\label{eq:ItoZ}
I_z  = \dE G_z (Z).
\end{equation}
with $Z$ given by
$$Z  (u) = \Gamma \PAR{ 1 - \frac \alpha 2} \times \frac{1}{n} \sum_{k=2}^n (H_k.u)^{\frac \alpha 2} \frac{|g_k|^{\alpha}}{\dE |g_k|^\alpha} ,$$
where $(g_1, \cdots, g_n)$ is a standard Gaussian vector $N(0,I)$ independent of $\cF$. Indeed, with our choice $\sigma^\alpha =  \pi /   ( 2 \sin(\pi \alpha / 2)\Gamma(\alpha))$ in \eqref{eq:stablefourier}, we have 
$$
2^{ \alpha / 2} \sigma^{\alpha}  =  2^{\alpha/2-1}  \Gamma ( \alpha/2) \Gamma ( 1 - \alpha/2 ) / \Gamma( \alpha)   = \Gamma ( 1 - \alpha /2) / \dE |g_1|^\alpha, 
$$
where we used the classical identities $\dE |g_1|^\alpha = 2^{\alpha/2} \Gamma ( 1 / 2 + \alpha / 2) / \sqrt \pi$ and, for $0 < \beta < 1$, $\Gamma ( \beta ) \Gamma ( 1 - \beta ) = \pi / \sin ( \pi \beta)$, $\Gamma(\beta) \Gamma ( 1/2 + \beta) = \sqrt \pi 2^{1 - 2\beta} \Gamma ( 2 \beta)$.  
We set 
$$
\bar Z(u) = \dE_\cF Z(u) = \Gamma \PAR{ 1 - \frac \alpha 2} \times \frac{1}{n} \sum_{k=2}^n (H_k.u)^{\frac \alpha 2}. 
$$
From Lemma \ref{le:concres2}, with overwhelming probability,  we get
$$\| Z - \bar Z  \|_{1 - \alpha/2+ \delta} \leq (\log n)^c \eta^{-\alpha/2} n^{- \delta /2}.$$   
Similarly, from Lemma \ref{le:concnorm}, with overwhelming probability, we have 
$$
\|  \gamma  - \bar \gamma \|_{1 - \alpha/2 + \delta} \leq  (\log n)^c  \eta^{-\alpha/2} n^{- \delta /2}.  
$$
We consider the matrix $B'$ given by $B'_{ij} = B_{ij} \IND ( i , j \geq 2)$. Its resolvent $R'_{ij}$ coincides with $R^{(1)}_{ij}$ for all $i,j \geq 2$ and $R'_{11} = -1/z$. Using Lemma \ref{le:gammaholder},  as $\bar Z$ equals $\gamma$ up to replacing $R$ by $R^{(1)}$,
$$
\|  \bar Z   - \gamma \|_{1 - \alpha/2+ \delta} \leq    c \eta^{- \alpha/2} n^{-1}  \PAR{ 1 + \sum_{k=1}^n |R_{kk}(z) - R'_{kk}(z)|^{\alpha/2}  + \eta^{\delta} \sum_{k=1}^n |R_{kk} - R'_{kk}|^\delta}.
$$
We apply Lemma  \ref{le:pertnorm} to the matrix $B'$ and use the triangle inequality, it gives
$$
\|  \bar Z - \gamma  \|_{1 - \alpha/2 + \delta} \leq   c \eta^{-\alpha/2} n ^{-1} + c \eta^{-\alpha} n^{-\alpha/2}  + c  \eta^{-\alpha/2} n^{- \delta} \leq c \eta^{-\alpha/2} n^{- \delta  }. 
$$
So finally,  with overwhelming probability, we deduce
\begin{equation}\label{eq:koko}
\|   Z - \bar \gamma \|_{1 - \alpha/2 + \delta}  \leq  (\log n)^c \eta^{-\alpha/2} n^{- \delta /2 }. 
\end{equation}
Observe that the right hand side is $o(1)$ for our range of $\eta$ (we can take $\delta$ as close from $\alpha/2$ as wished).  The assumption  $\dE | R_{11} (z)| \leq \veps^{-1}$ and Lemma \ref{le:gammaholder} imply that for some $c>0$, $\| \bar \gamma \|_{1 - \alpha/2 + \delta} \leq c$ .  On the other end, Lemma \ref{le:gammaholder} also implies the rough bound $\|Z\|_{1 - \alpha/2 + \delta} \leq L \eta^{-\alpha/2}$ with $L = (c/n) \sum_{k=2}^n |g_k|^{\alpha}$.  For any $p \geq 0$,   $\dE L^p $ being bounded, we deduce from \eqref{eq:koko} that
\begin{equation}\label{eq:koko2}
 \PAR{  \dE \|   Z - \bar \gamma \|^p _{1 - \alpha/2 + \delta} }^{1/p} \leq (\log n)^c \eta^{-\alpha/2} n^{- \delta /2 }  \AND \dE \|   Z  \|^p _{1 - \alpha/2 + \delta}  \leq  c . 
\end{equation}
 We now claim for that for some $\veps' >0$, with overwhelming probability, for all $u \in S_1^+$
\begin{equation}\label{eq:minreal}
\Re (Z.u) > \veps' \AND \Re (\bar \gamma.u) > \veps'
\end{equation}
(that is $Z, \bar \gamma \in \cH_{\alpha/2,\delta} ^{\veps'}$). First, since  $\| Z - \bar \gamma \|_\infty \leq \|   Z - \bar \gamma \|_{1 - \alpha/2 + \delta} = o(1)$, it suffices to prove that $\bar \gamma \in \cH_{\alpha/2,\delta} ^{\veps'}$ up to modifying the value of $\veps'$. We should thus check that (for some new $\veps' >0$), for all $u \in S_1^+$
$$
\Re  \dE (-iR_{11} .u)^{\alpha/2}  > \veps' .
$$  
The assumption $\dE  ( \Re ( - i  R_{11} )) ^{\alpha/2} \geq \veps$ implies that for any $u \in S_1^+$, $\dE  ( \Re ( - i  R_{11} . u )) ^{\alpha/2} \geq \veps$ (since $\Re(h.u) \geq \Re(h)$). Using Lemma \ref{le:momo}, we deduce that $\Re \dE   ( - i  R_{11} . u ) ^{\alpha/2} \geq \veps$. It proves \eqref{eq:minreal}.

We now want to apply \eqref{eq:ItoZ}. Let $V$ be the event of overwhelming probability such that \eqref{eq:minreal} holds. On $V$, we apply Lemma \ref{le:fplip}, while on $V^c$, we use Lemma \ref{le:deffp}\eqref{le:deffp2}. It gives  
\begin{eqnarray}\label{eq:fstep1}
\| I_z - G_z (\bar \gamma) \|_{1 - \alpha/2+ \delta} & \leq & \dE \| G_z(Z) - G_z (\bar \gamma) \|_{1 - \alpha/2 + \delta} \nonumber\\
&\leq & c  \dE  ( 1 + \| \bar \gamma \|_{1 - \alpha/2+ \delta}  + \| Z \|_{1 - \alpha/2+ \delta} )  \| Z - \bar \gamma \|_{1 - \alpha/2 + \delta}  \nonumber \\
&& \quad \quad   + c  \eta^{-\alpha}   \dE \IND_{V^c} \PAR{ \| Z \|_{1 - \alpha/2 + \delta} + \| \bar \gamma \|_{1 - \alpha/2 + \delta} } \nonumber\\
& \leq & (\log n)^c \eta^{-\alpha/2} n^{- \delta /2}\,,
\end{eqnarray}
where the last inequality follows from \eqref{eq:koko2} and Cauchy-Schwartz inequality.

\paragraph{Step 2 :  bounds on $Q$ and $T$. } Recall that $Z (e^{i \pi/4}) =  (c / n) \sum_{k=2}^n \Re (H_k)^{\frac \alpha 2}|g_k|^{\alpha}$.  From Lemma \ref{cor:formulealice}, given $\cF$, with $Q=Q_{e^{i\pi/4}}$, we have
\begin{equation}\label{eq:eddis2}
\Re(Q) \stackrel{d}{=}  c S [ Z(e^{i\pi/4})]^{2/\alpha}, 
\end{equation}
where $S$ is a non-negative $\alpha/2$-stable variable independent of $Z$. By Lemma \ref{le:inversetailS}, for $a > 0$ and $b > 0$ small enough,
$$
\dP ( S \leq (\log n)^{-a} ) = \dP \PAR{ e^{  b S^{-\frac{\alpha}{2-\alpha}}} \geq e^{ b (\log n) ^{\frac{a \alpha}{2-\alpha}}} }\leq  c  e^{ - b (\log n) ^{\frac{a \alpha}{2-\alpha}}}.
$$
Hence, if $a \alpha / (2 - \alpha)  > 1$, the event $\{ S \leq (\log n)^{-a} \}$ holds with overwhelming probability. Putting together  \eqref{eq:koko} and \eqref{eq:eddis2}, we deduce that for some $c_0 > 0$, with overwhelming probability, 
\begin{equation}\label{eq:reqreq}
 \Re (Q) \geq 2 (\log n)^{-c_0}.  
\end{equation}
Similarly,  from Lemma \ref{cor:formulealice}, given $\cF$,
\begin{equation}\label{eq:coupSQ}
|Q| \leq c n^{-\frac 2 \alpha}  \sum_{k=2}^n |H_k| X_{1k}^{2}   \stackrel{d}{=}  c S Y^{2/\alpha}, 
\end{equation}
where, given $\cF$, $S$ is as above and independent of $Y$ given by  
$$
Y  =    \frac{1}{n} \sum_{k=2}^n |H_k|^{\frac \alpha 2} \frac{|g_k|^{\alpha}}{\dE |g_k|^\alpha},
$$
with $(g_1, \cdots, g_n)$ iid standard normal variables. We set 
$$
\bar Y = \dE_{\cF} Y =  \frac{1}{n} \sum_{k=2}^n |H_k|^{\frac \alpha 2}.
$$
From \eqref{eq:bernstein}, with overwhelming probability, 
$$
|Y - \bar Y| \leq (\log n) ^c n^{-1/2} \eta^{-\alpha/4} = o(1). 
$$
By Lemma \ref{le:pertnorm}, 
$$
|\bar Y - \rho | \leq c n^{-\alpha/2} \eta^{-\alpha/2} = o(1). 
$$
where 
$$
\rho = \frac 1 n \sum_{k=1}^n |R_{kk}|^{\alpha/2}. 
$$
Let $\bar \rho =\dE \rho =  \dE |R_{11}|^{\alpha/2}$. The proof of  \eqref{eq:cgamma} gives for any $t \geq 0$, 
$$
\dP (| \rho - \bar \rho| \geq t ) \leq 2 \exp ( - c n \eta^{2} t^{4/\alpha} ). 
$$
Hence, with overwhelming probability, 
$$
|\rho - \bar \rho | \leq (\log n)^c n^{-\alpha/4} \eta^{-\alpha/2} = o(1).
$$
By assumption $\bar \rho \leq \veps^{-1}$. It follows that with overwhelming probability,
$$
|Y| \leq 2 \veps^{-1}.
$$
We denote by $F$ the event 
$$
\{ |Y| \leq c_0 \hbox{ and } \Re (Q) \geq 2  (\log n)^{-c_0} \}.
$$
From what precedes, if $c_0$ is large enough, the event $F$ holds with overwhelming probability.  We also consider the event : 
$$
E = \BRA{ |T| \leq (\log n)^{-c_0}  } 
$$ 
Since $M_z \leq 1 /( n \eta^{2}) \leq 1$, by Lemma \ref{le:offdiag}, if $0 \leq t \leq 1$
\begin{equation}\label{eq:Ttail}
\dP_\cF  ( |T| \geq t ) \leq c M^{\alpha/2} t^{-\alpha} ( \log n )  ,
\end{equation}
and, consequently, for $\beta > \alpha$ and $0 \leq t \leq 1$, for some constant $c = c(\alpha)$, 
\begin{equation}\label{eq:Ttail2}
\dE_\cF  |T|^\beta \IND ( |T| \leq t ) \leq \frac{c}{ \beta - \alpha} M^{\alpha/2}   t^ {\beta - \alpha} ( \log n  ).
\end{equation}
Thus by Chebychev's inequality for some $c>0$,
\begin{equation}\label{eq:Ecomp}
\dP_\cF  ( E^c ) \leq M^{\alpha/2} (\log n )^{c}.
\end{equation}

\paragraph{Step 3 :  from Schur's formula \eqref{eq:schurcf} to $I_z $. }  In \eqref{eq:coupSQ}, up to enlarge our probability space, we may assume that the variables $(Y,S,Q,T)$ are defined on the same probabilisty space. From  \eqref{eq:schurcf}, Jensen's inequality and  Lemma \ref{le:gammaholder}, for any $s > 0$,
\begin{eqnarray}
\| \bar \gamma - I_z \|_{1 - \alpha/2 + \delta} & \leq& c\dE \| ( \PAR{ h   + Q  - i T }^{-1}.u)^{\alpha/2} -  (\PAR{ h   +  Q }^{-1}.u )^{\alpha/2} \|_{1 - \alpha/2+ \delta} \nonumber \\
& \leq &  (\log n)^c \dE S^{\alpha -1} \IND_{ S \leq s} |T| \IND_{E}  +  c \eta^{-\alpha/2} \PAR{  \dP ( S \geq s  ) +  \dP ( E^c \cup F^c)},\label{eq:gege}
\end{eqnarray}
where we have used that on $E \cap F$, $|h+ Q - i T|$ and $| h + Q|$ are $O(S)$ and $\Re (h+ Q - i T) \geq \Re (Q) - |T| \geq (\log n)^{-c_0}$. 

We first consider the case $0 < \alpha \leq 1$ in \eqref{eq:gege} where we take $s = \infty$.  Let $p = \log n$ and $1/ q + 1 / p = 1$. We have $q = 1 + 1/\log n + o(1/\log n) $. From H\"{o}lder  inequality and Lemma \ref{le:inversetailS}
$$
\dE S^{\alpha -1} |T| \IND_{E} \leq  \PAR{ \dE S^{p(\alpha -1)}}^{ 1 / p} \PAR{\dE |T|^q \IND_{E}}^{1 / q}  \leq (\log n)^c  \PAR{\dE |T|^q \IND_{E}}^{1 / q}. 
$$
Recall that $E = \{ |T| \leq (\log n)^{-c_0} \}$. Also Jensen's inequality implies that $\dE M^{\alpha/2} \leq \bar M^{\alpha/2}$. We deduce from \eqref{eq:Ttail2} that 
$$\PAR{\dE |T|^q \IND_{E}}^{1 / q} \leq \PAR{ (\log n)^c  \bar M^{\alpha/2} }^{1/q} \leq  (\log n)^{c} \bar M^{\alpha/2},$$
where the last inequality comes from $\bar M \geq c n^{-1}$ which follows from \eqref{eq:pertM} and the assumptions of Proposition \ref{prop:afpi}. 
Hence, from \eqref{eq:Ecomp}-\eqref{eq:gege}, 
$$
\| \bar \gamma - I_z \|_{1 - \alpha/2 + \delta} \leq  (\log n)^c \bar M^{\alpha/2} + (\log n )^{c} \eta^{-\alpha/2} \bar M^{\alpha/2}\leq (\log n )^{c'} \eta^{-\alpha/2} \bar M^{\alpha/2}.
$$
Similarly, for $1 < \alpha < 2$, we choose $s = M^{-1}$ in \eqref{eq:gege}. Let $p = \alpha / (2 (\alpha -1) )> 1$ and $q= \alpha / ( 2 - \alpha ) > \alpha$. Since $1/ q + 1 / p = 1$, from H\"{o}lder  inequality, we find for $n$ large enough,
$$
\dE S^{\alpha -1} \IND_{ S \leq s}  |T| \IND_{E} \leq \PAR{ \dE S^{\alpha/2} \IND_{ S \leq s}  }^{1/p} \PAR{ \dE |T|^q \IND_{E}} ^{1/q} \leq (\log n) \PAR{ \dE |T|^q \IND_{E}} ^{1/q},
$$
where we have used that $\dE S^{\alpha/2} \IND ( S \leq s) \leq c \log ( s\vee 2)$ and $\bar M \geq c n^{-1}$.  Since $\alpha / (2q) = 1 - \alpha/2$, we get from \eqref{eq:Ttail2}-\eqref{eq:Ecomp}, 
$$
\| \bar \gamma - I_z \|_{1 - \alpha/2 + \delta} \leq (\log n)^c \bar M^{1 - \alpha/2} + (\log n )^{c}  \eta^{-\alpha/2} \bar M^{\alpha/2}.
$$
We deduce from \eqref{eq:fstep1} that the first statement of Proposition \ref{prop:afpi} holds. 

\paragraph{Step 4 :  from Schur's formula \eqref{eq:schurcf} to $\dE |R_{11}|^p$ and $\dE R_{11}^p$. }  
We only treat the case $\dE |R_{11}|^p$. The case of $\dE R_{11}^p$ is identical. Let $J_z = \dE | h + Q |^{-p}$. From Corollary \ref{cor:stg}
\begin{equation*}\label{eq:ABStoZ}
J_z = \dE r_{p,z} (Z),
\end{equation*}
where $Z$ is as above defined in \eqref{eq:ItoZ}.  We drop the parameters $(p,z)$. From Jensen's inequality, 
$$
| J - r (\bar \gamma) | \leq \dE | r (Z) - r (\bar \gamma) |. 
$$
We then argue as in \eqref{eq:fstep1} : when the event $V$ holds, we apply Lemma \ref{le:fplip} and when $V$ fails to hold, we use instead that $| r (g) | \leq c \eta^{-p}$ for $g \in \bar \cH_{\alpha}^0$ (Lemma \ref{le:deffp}).  It gives in conjunction with \eqref{eq:koko2}
\begin{equation}\label{eq:fstep2}
| J - r (\bar \gamma) | \leq c \dE \| Z - r  (\bar \gamma) \|_\infty + c \eta^{-p} \dP (E^c) \leq (\log n)^c \eta^{-\alpha/2} n^{- \delta /2}. 
\end{equation}

We may now repeat the third step. Recall the variable $S$ defined in \eqref{eq:coupSQ} and the events $E,F$.  We observe that for any $x,y \in \dC$, 
$$\ABS{ \frac{1}{|x|^p} - \frac{1}{|y|^p}} \leq \frac{|x- y|}{ |x| ^p |y|^p}  \sum_{k=0}^{p-1} |x|^{k} |y|^{p-k-1}  \leq  p \frac{ |x - y |}{ s ^{p+1}},$$ where $s = \Re (x) \wedge \Re(y)$. Hence, from \eqref{eq:schurcf}, Jensen's inequality,
\begin{eqnarray*}
| \dE |R_{11}|^p - J | & \leq& \dE \ABS{  \ABS{ h   + Q  - i T }^{-p}  -  \ABS{ h   +  Q }^{-p} }\nonumber \\
& \leq &  (\log n)^ {c} \dE  |T| \IND_{E}  +  2 \eta^{-p}   \dP ( E^c \cup F^c),\label{eq:gegen}
\end{eqnarray*}
where we have used that on $E \cap F$, $\Re (h+ Q - i T) \geq \Re (Q) - |T| \geq (\log n)^{-c_0}$. Using  \eqref{eq:Ttail2}-\eqref{eq:Ecomp}, we arrive at 
$$
| \dE |R_{11}|^p - J | \leq (\log n)^c \eta^{-p} \bar M^{\alpha/2}. 
$$
Together with \eqref{eq:fstep2}, it implies the second statement of the proposition.\qed

\subsection{Local law in neighborhood of the origin}

We denote by $$\widetilde \gamma^\star_z =  \gamma^\star_z(1)  =   \Gamma (1 - \alpha/2) \dE (-iR_\star) ^{\alpha/2}, $$ where $R_\star$ was defined by \eqref{eq:RDE}.  Lemma \ref{le:fpgamma} asserts that
$
\widetilde \gamma^\star_z  =  \Gamma (1 - \alpha/2) s_{\alpha/2,z} (  \widetilde \gamma^\star_z ). 
$
We prove the following theorem :
\begin{theorem}\label{th:mainloc}
For $\alpha \in (0,2) \backslash \cA$, there exist $\tau >0$ and $c > 0$ and $0 < \delta < \alpha/2$ such that if $|z| \leq \tau$ and $\Im (z) \geq (\log n)^c n^{ - \alpha / ( 2 + \alpha)}$, then 
$$
\| \bar \gamma_z -  \gamma^\star_z \|_{1 - \alpha/2+ \delta}  = o(1) \AND | \dE |R_{11}| -  r_{1,z}(\gamma^\star_z) |  = o(1).
$$
Finally, if $ p \geq \alpha/2$ is such that $\Im (z) \geq (\log n)^c n^{ - \alpha / ( 2 p + \alpha)}$, 
\begin{eqnarray*}
|\dE |R_{11}(z)|^p - r_{p,z} ( \gamma^\star_z) | & = & o(1) \\
|\dE (-i R_{11}(z))^p - s_{p,z}( \widetilde \gamma^{\star}_z) | & = & o(1).
\end{eqnarray*}
\end{theorem}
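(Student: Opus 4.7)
The strategy is a bootstrap/continuity argument in the variable $z$, propagating control from large $\Im z$ down to the threshold $\Im z \gtrsim (\log n)^c n^{-\alpha/(2+\alpha)}$. The three ingredients are: Proposition~\ref{prop:afpi} (which produces an approximate fixed point equation for $\bar\gamma_z$ in the norm $\|\cdot\|_{1-\alpha/2+\delta}$), Proposition~\ref{le:fploccont} (a local contraction that translates an approximate fixed point into closeness to $\gamma^\star_z$), and Proposition~\ref{fixlem} (which controls $\gamma^\star_z$ itself near $z=0$, in particular gives a uniform positive lower bound on $\Re \gamma^\star_z$ and a uniform upper bound on $r_{p,z}(\gamma^\star_z)$). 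The circularity is that Proposition~\ref{prop:afpi} requires the a priori bounds $\E|R_{11}(z)| \leq \veps^{-1}$ and $\E(\Im R_{11}(z))^{\alpha/2} \geq \veps$ to even apply; both will be guaranteed precisely by the induction hypothesis that $\bar\gamma_z$ is close to $\gamma^\star_z$.

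\textbf{Initialization and continuity.} First I will verify that at $\Im z = \eta_0$ of order $1$ and $|z|\le \tau$, $\bar\gamma_z$ lies within $\tau/2$ of $\gamma^\star_z$ in $\|\cdot\|_{1-\alpha/2+\delta}$. This is standard: for such $z$ every $|R_{kk}|\leq \eta_0^{-1}$ is bounded, so $\bar M_z = O(1/n)$, the right--hand sides in Proposition~\ref{prop:afpi} are $o(1)$, and hence $\bar\gamma_z = G_z(\bar\gamma_z) + o(1)$; Proposition~\ref{le:fploccont} then yields $\|\bar\gamma_z - \gamma^\star_z\|_{1-\alpha/2+\delta}=o(1)$. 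Next I need the map $z \mapsto \bar\gamma_z$ from $\{|z|\leq \tau,\,\Im z \geq \eta_n\}$ to $\cH_{\alpha/2,1-\alpha/2+\delta}$ to be continuous (in fact H\"older): this is a direct consequence of the resolvent identity $R(z)-R(z') = (z-z')R(z)R(z')$ combined with Lemma~\ref{le:gammaholder}(ii), since $|R_{kk}|\leq \eta_n^{-1}$ deterministically. Continuity of $z\mapsto \gamma^\star_z$ is already provided by Proposition~\ref{fixlem} (the implicit function argument).

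\textbf{Bootstrap descent.} Set $\eta_n = (\log n)^c n^{-\alpha/(2+\alpha)}$ and define
$$\eta^\star = \inf\BRA{\eta\in[\eta_n,1]:\ \|\bar\gamma_{z}-\gamma^\star_z\|_{1-\alpha/2+\delta}\leq \tau/2\ \text{for all }|z|\leq\tau,\ \Im z\geq\eta}.$$
By initialization $\eta^\star\leq \eta_0$. I claim $\eta^\star=\eta_n$. Suppose not; by continuity there exists $z_0$ with $\Im z_0=\eta^\star>\eta_n$ and $\|\bar\gamma_{z_0}-\gamma^\star_{z_0}\|_{1-\alpha/2+\delta}=\tau/2$. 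For such $z_0$ the proximity $\|\bar\gamma_{z_0}-\gamma^\star_{z_0}\|\leq\tau/2$ combined with Proposition~\ref{fixlem} gives $\Re \bar\gamma_{z_0}(u)\geq c'>0$ uniformly on $S_1^+$, so (via Lemma~\ref{le:momo} applied as in Step~1 of the proof of Proposition~\ref{prop:afpi}) $\E(\Im R_{11})^{\alpha/2}\geq \veps$. Likewise, Lemma~\ref{le:fplip} applied to $r_{1,z_0}$ combined with the second conclusion of Proposition~\ref{prop:afpi} (evaluated with error bounded by the previous step of the descent) gives $\E|R_{11}(z_0)|\leq C$. The hypotheses of Proposition~\ref{prop:afpi} now hold, and using $\bar M_{z_0}\leq C/(n\eta^\star)$ via \eqref{eq:pertM}, the error term becomes
$$(\log n)^c\PAR{(\eta^\star)^{-\alpha/2}\bar M_{z_0}^{\alpha/2}+(\eta^\star)^{-\alpha/2}n^{-\delta/2}+\bar M_{z_0}^{1-\alpha/2}\IND_{\alpha>1}}\leq (\log n)^{c'}(\eta^\star)^{-\alpha}n^{-\alpha/2}+\ldots,$$
which is $o(1)$ precisely because $\eta^\star>\eta_n\gg n^{-\alpha/(2+\alpha)}$ (the dominant constraint arises from the $p=1$ error $\eta^{-1}\bar M_z^{\alpha/2}\sim (\eta n)^{-\alpha/2}\eta^{-1}$ propagated through the bound $\E|R_{11}|\leq r_{1,z}(\bar\gamma_z)+o(1)$). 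Applying Proposition~\ref{le:fploccont} gives $\|\bar\gamma_{z_0}-\gamma^\star_{z_0}\|_{1-\alpha/2+\delta}=o(1)\ll\tau/2$, contradicting $\eta^\star>\eta_n$.

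\textbf{Moments and main obstacle.} Once $\|\bar\gamma_z-\gamma^\star_z\|_{1-\alpha/2+\delta}=o(1)$ is established for all $|z|\leq\tau$, $\Im z\geq(\log n)^c n^{-\alpha/(2+\alpha)}$, the other estimates follow: Lemma~\ref{le:fplip} gives $|r_{p,z}(\bar\gamma_z)-r_{p,z}(\gamma^\star_z)|=o(1)$ and $|s_{p,z}(\bar\gamma_z(1))-s_{p,z}(\widetilde\gamma^\star_z)|=o(1)$, and combining with the last two bounds of Proposition~\ref{prop:afpi} yields the required moment asymptotics provided $\eta^{-p}\bar M_z^{\alpha/2}=o(1)$, which is exactly the condition $\Im z\geq (\log n)^c n^{-\alpha/(2p+\alpha)}$. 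The main obstacle is the self--consistency of the bootstrap: one must arrange that the a priori bound $\E|R_{11}(z)|\leq C$ used to control $\bar M_z$ in Proposition~\ref{prop:afpi} is itself a consequence of the induction hypothesis via the $p=1$ moment bound, and that the error in that $p=1$ moment bound does not exceed $\tau/(2c)$ where $c$ is the Lipschitz constant in Proposition~\ref{le:fploccont}. Careful bookkeeping of the constants (and perhaps a slight shrinking of $\tau$) resolves this, but it is the only point where the argument is delicate.
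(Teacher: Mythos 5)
Your proposal follows essentially the same continuity/bootstrap strategy as the paper and correctly identifies the key ingredients (Propositions~\ref{prop:afpi}, \ref{le:fploccont}, \ref{fixlem}), the a priori bounds obtained from the proximity of $\bar\gamma_z$ to $\gamma^\star_z$, the dominance of the $p=1$ error $\eta^{-1}\bar M_z^{\alpha/2}$ that sets the threshold $n^{-\alpha/(2+\alpha)}$, and the circularity coming from the hypothesis $\E|R_{11}(z)|\le\veps^{-1}$. The one place where your version is slightly weaker is the encoding of the induction hypothesis: you propagate only $\|\bar\gamma_z-\gamma^\star_z\|\le\tau/2$, and then try to extract the moment bound from ``the previous step of the descent'' --- but in an infimum argument there is no discrete previous step, so this phrase conceals exactly the circularity you later flag as the ``main obstacle''. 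The paper resolves it cleanly by \emph{augmenting the bootstrap variable}: the induction hypothesis \eqref{eq:boots} is the \emph{pair} of inequalities $\|\bar\gamma_{z_1}-\gamma^\star_{z_1}\|_{1-\alpha/2+\delta}\le\veps$ and $|\E|R_{11}(z_1)|-r_{1,z_1}(\gamma^\star_{z_1})|\le\veps$, propagated by a deterministic descent $\eta_1\to\eta_1-n^{-3/\delta}$; continuity (the diagonal resolvent Lipschitz bound together with Lemma~\ref{le:gammaholder}) keeps both within $2\veps$ at the new height, which together with Proposition~\ref{le:4IFTapp} yields the a priori bounds $\E|R_{11}|\le 2c_1$ and $\E(\Im R_{11})^{\alpha/2}\ge (2c_1\Gamma(1-\alpha/2))^{-1}$, so Proposition~\ref{prop:afpi} applies and restores both inequalities with the original $\veps$. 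Your approach works with the same fix; ``careful bookkeeping'' is really ``carry the moment bound as a second induction variable''.
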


We first check that the assumptions of Proposition \ref{prop:afpi} are met for any fixed $z \in \dC_+$.
\begin{lem}\label{le:afpi}
If $ z\in \dC_+$, there exists $c = c(z) >0$ such that $|R_{11}(z)| \leq c^{-1}$ and $\dE \Im ( R_{11} (z))^{\alpha/2} \geq c$. 
\end{lem}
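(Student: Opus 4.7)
The first bound is essentially immediate. Since $A$ is a real symmetric (hence Hermitian) matrix, the resolvent satisfies $\|R(z)\| \leq 1/\Im(z)$ as an operator norm, so that
$$
|R_{11}(z)| \leq \|R(z)\| \leq \frac{1}{\Im z} \quad \text{almost surely},
$$
and in particular $\dE|R_{11}(z)| \leq (\Im z)^{-1}$. This gives the first inequality with $c$ depending only on $\Im z$.

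The second bound requires more work but can be obtained by combining a pointwise positivity statement with the convergence of $R_{11}(z)$ to $R_\star(z)$ recalled in subsection \ref{subsec:exfpe}. First, for each fixed $n$, $\Im R_{11}(z) > 0$ almost surely (this follows from $\Im z > 0$ and the spectral decomposition $\Im R_{11}(z) = \sum_k \Im(z) \langle u_k, e_1\rangle^2 / |\lambda_k - z|^2$), so $\dE(\Im R_{11}(z))^{\alpha/2} > 0$ for every $n$. Secondly, from \cite{BCC}, $R_{11}(z)$ converges in distribution to $R_\star(z) \in \dC_+$, and the deterministic bound $(\Im R_{11}(z))^{\alpha/2} \leq (\Im z)^{-\alpha/2}$ gives uniform integrability. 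Hence
$$
\dE(\Im R_{11}(z))^{\alpha/2} \ninf \dE(\Im R_\star(z))^{\alpha/2}.
$$
Since $\Im R_\star(z) > 0$ almost surely, the limit is a positive constant $c_\star(z) > 0$. Combining this with the fact that the prelimit is positive for every finite $n$, we conclude
$$
\inf_{n \geq 1} \dE(\Im R_{11}(z))^{\alpha/2} \geq c(z) > 0,
$$
which establishes the second bound uniformly in $n$.

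No step here is really an obstacle: the first bound is a trivial operator-norm estimate and the second reduces to uniform integrability plus the already-quoted weak convergence to $R_\star(z)$. The only point to be careful about is that $c$ is allowed to depend on $z$, so no uniformity in $z$ near $0$ is required (that uniform statement is exactly what Proposition \ref{fixlem} will later provide near $z = 0$).
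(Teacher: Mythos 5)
Your first bound is the same as the paper's (the trivial resolvent bound $|R_{11}(z)|\leq 1/\Im z$), but for the lower bound on $\dE(\Im R_{11}(z))^{\alpha/2}$ you take a genuinely different and ``softer'' route. The paper's own proof is a direct, self-contained quantitative estimate: starting from Schur's formula \eqref{eq:schurcf} one writes
$$
\Im (R_{11}) = \frac{\Im(z) + \langle X_1,\Im R^{(1)}X_1\rangle}{|z+iQ+T|^2} \geq \frac{1}{3}\frac{\Im(z)}{|z|^2+|Q|^2+|T|^2},
$$
and then uses the tail bound \eqref{eq:Ttail} for $T$ and the fractional-moment bound $\dE|Q|^\delta<\infty$ (from \eqref{eq:coupSQ} and Lemma \ref{le:inversetailS}) to show that with probability at least $1/2$, for $n$ large, $\Im R_{11}(z)\geq \Im(z)/(3|z|^2+3t+3)$ for some $t=t(z)$, which immediately gives the claimed lower bound with an explicit $c(z)$. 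Your argument instead appeals to the weak convergence $R_{11}(z)\Rightarrow R_\star(z)$ from \cite{BCC} quoted in Subsection \ref{subsec:exfpe}, combines it with the a.s.\ bound $|R_{11}(z)|\leq 1/\Im z$ (which makes $(\Im\,\cdot\,)^{\alpha/2}$ effectively bounded on the relevant range), and with the fact that $R_\star(z)\in\dC_+$ a.s.\ to deduce $\dE(\Im R_{11}(z))^{\alpha/2}\to\dE(\Im R_\star(z))^{\alpha/2}>0$; the finitely many small $n$ are handled by the deterministic positivity $\Im R_{11}(z)>0$. Both arguments are valid. The paper's approach is more quantitative (gives an explicit form of $c(z)$ and stays within the machinery already developed in Section \ref{sec:rates}) and does not invoke the external convergence result, while yours is shorter and arguably more transparent, at the cost of importing the convergence-in-distribution statement from \cite{BCC} and of giving a non-constructive $c(z)$. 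Either suffices for the role this lemma plays as the base case of the continuity/bootstrap argument in Theorem \ref{th:mainloc}, which only needs a $z$-dependent lower bound uniform in $n$.
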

\begin{proof}
Recall that $|R_{11}| \leq 1/ \Im (z)$. For the lower bound, let $X_1  = (X_{1k})_{2 \leq k \leq n}$. Since $\Im ( R^{(1)} ) \geq 0$, we have from \eqref{eq:schurcf} 
$$
\Im ( R_{11} ) =  \frac{\Im (z) + \langle  X_1 ,  \Im R^ {(1)} X_1 \rangle}{| z  + i Q   + T |^2} \geq  \frac{1}{3} \frac{\Im (z)}{| z|^2  + |Q|^2 + |T|^2}. 
$$
By \eqref{eq:Ttail}, since $M \leq n^{-1} \Im (z)^{-2} \leq c/ n$, we have $\dP (|T| \geq 1  ) \leq c n^{-\alpha/2} (\log n) \leq 1/ 4$ for $n$ large enough. Finally, from \eqref{eq:coupSQ}, $\dE |Q|^{\delta} < c$ if $\delta < \alpha/2$. We deduce that $\dP ( |Q| \geq t) \leq 1/4$ for $t$ large enough.  Then, with probability at least $1/2$, $\Im ( R_{11} ) \geq  \Im (z) / ( 3 |z|^2 + 3t + 3)$.  \end{proof}

\begin{proof}[Proof of Theorem \ref{th:mainloc}] Set $r_{z} = r_{1,z}$ and $\eta_{\min}  = (\log n)^{c_0} n^{-\alpha/( 2 + \alpha)}$ for some $c_0>0$ to be chosen later on. Since $\alpha / (2 + \alpha) <1/2$, we may choose $0 < \delta < \alpha/2$ such that for all $\eta \geq \eta_{\min}$ and fixed $c >0$, 
\begin{equation}\label{eq:choixd}
(\log n)^c \eta^{-\alpha/2}  n^{-\delta/2} = o(1).
\end{equation}
We fix such parameter $\delta$.  We use a continuity argument.  Assume that for some $z_1 = E + i \eta_1 \in \dC_+$, with $|z_1| \leq \tau$ and $\eta_1 \geq \eta_{\min}$, we have that
\begin{equation}\label{eq:boots}
\| \bar \gamma_{z_1} - \gamma_{z_1}^\star \|_{1 - \alpha/2 + \delta} \leq \veps \AND | \dE |R_{11}(z_1)| - r_{z_1}( \gamma_{z_1}^\star ) | \leq \veps 
\end{equation}
where $\veps \leq \tau/2$ is an arbitrarily small constant and $\tau$ is as in Proposition \ref{le:fploccont}. Let $z = E + i \eta$ with $\eta_1  - n^{-3/\delta} \leq \eta \leq \eta_1$. Using $\eta \geq 1/ \sqrt n$, we have $| R_{kk} (z)  - R_{kk} (z_1)| \leq |\eta - \eta_1| / \eta^2 \leq 2 n^{-3/\delta +1} \leq 2 n^{-2/\delta}$.  Hence,  by Lemma \ref{le:gammaholder}, 
$$
\| \bar \gamma_{z_1} -  \bar \gamma_{z} \|_{1 - \beta + \delta} \leq c n^{\beta/2} n^{-2 \beta /\delta} + c n^{\beta/2 - \delta/2}  n^{-2} = o(1). 
$$
The same holds for $\|   \gamma^\star_{z_1} -  \gamma^ \star_{z} \|_{1 - \beta + \delta}$. From the triangle inequality, we get 
\begin{equation}\label{eq:huhu}
\| \bar \gamma_{z} - \gamma_{z}^\star \|_{1 - \beta + \delta} \leq 2 \veps \leq \tau.
\end{equation}
Similarly, using the continuity of $z \mapsto r_z (f)$ for $f \in \cH^0_{\alpha/2}$, we get
$$
| \dE |R_{11}(z)| - r_{z}( \gamma_{z}^\star ) | \leq 2 \veps.
$$

By Proposition \ref{le:4IFTapp}, there exists a constant $c_1 >1$ such that for all $z \in \dC$ with $|z| \leq \tau$, 
$$
|r_{z}( \gamma_{z}^\star )| \leq c_1 \AND \gamma^ \star_{z} (e^{i \pi /4} ) \geq c_1^{-1}.
$$
Hence, if $\veps$ is small enough, we deduce that 
$$
 \dE |R_{11}(z)| \leq 2 c_1 \AND  \dE (\Im R_{11}(z))^{\alpha/2} \geq \frac{1}{2c_1 \Gamma (1 - \alpha /2)}. 
$$
We may thus apply Proposition \ref{prop:afpi}. The first inequality of  Proposition \ref{prop:afpi}, in conjunction with Corollary \ref{le:fploccont} and \eqref{eq:huhu}, implies that 
\begin{equation}\label{eq:totom}
\| \bar \gamma_z -  \gamma^\star_z \|_{1 - \alpha/2 + \delta} \leq   (\log n )^{c}   \PAR{    \eta^{-\alpha/2} \bar M^{\alpha/2}  +   \eta^{-\alpha/2} n^{-\delta/2}  + \bar M^{1 -\alpha/2}\IND_{\alpha >1}} .
\end{equation}
The second inequality of  Proposition \ref{prop:afpi} gives thanks to Lemma \ref{le:fplip} and the above inequality gives 
\begin{equation}\label{eq:sosom}
|\dE |R_{11}| - r_z ( \gamma^\star_z) |  \leq  (\log n)^c \PAR{ \eta^{-1} \bar M^{\alpha/2} +  \eta^{-\alpha/2}n^{-\delta/2}  }. 
\end{equation}
From \eqref{eq:pertM}, we have
$$
\bar M  = \frac{\dE \Im (R^{(1)}_{22})}{n \eta}  \leq \frac{ \dE |R_{11}|}{n \eta} + \frac{3}{n^2 \eta^2} \leq \frac{ 2 c_1 +3 }{ n \eta}. 
$$
We deduce from  \eqref{eq:totom}-\eqref{eq:sosom} and \eqref{eq:choixd} that 
$$
\| \bar \gamma_z -  \gamma^\star_z \|_{1 - \alpha/2 + \delta} \leq   (\log n )^{c}     \eta^{-\alpha}  n^{-\alpha/2}   + o(1) \AND  |\dE |R_{11}| - r_z ( \gamma^\star_z) |  \leq   (\log n )^{c}     \eta^{-1-\alpha/2}  n^{-\alpha/2}   + o(1).
$$
It is easy to check that if $c_0$ in choice of $\eta_{\min}$ is small enough, the above terms are $o(1)$. It follows that $z$ satisfies \eqref{eq:boots} with the same $\veps$.

We may thus use of continuity argument, for any fixed $z$ with $|z| \leq \tau$, \eqref{eq:boots} holds by Lemma \ref{le:afpi}, Proposition \ref{prop:afpi} and Lemma \ref{le:fploccont}. By iteration, it proves that \eqref{eq:boots} holds for all $z \in \dC_+$ with $|z| \leq \tau$ and $\Im (z) \geq \eta_{\min}$.  It proves the first statement of Theorem \ref{th:mainloc}. The second statement is then a direct consequence of a  new application of Proposition \ref{prop:afpi}. 
\end{proof}

Theorem \ref{th:mainloc} implies a control of the number of eigenvalues in an interval.  Recall that $|\Lambda_I|$ is the number of eigenvalues of $A$ in the interval $I$. We denote by $\mu_\star$ the probability measure on $\R$ whose Cauchy-Stieltjes transform is $i s_{1,z} (\widetilde \gamma^\star_z)= \dE R_\star$ (defined by \eqref{eq:RDE}) : for any $z \in \dC_+$,
$$
\int \frac{d \mu_\star (\lambda)} { \lambda -z } = i s_{1,z} (\widetilde \gamma^\star_z). 
$$
It is notably shown in \cite{BAG1,BDG09,BCC} that $\mu_{\star}$ has a bounded positive continuous density which is explicit at $0$. 
\begin{cor}
\label{cor:mainloc}
For $\alpha \in (0,2) \backslash \cA$, there exist $\tau >0$ and $c > 0$ such that if $I \subset [-\tau,\tau]$ is an interval of length at  least $(\log n)^c n^{ - \alpha / ( 2 + \alpha)}$, then with overwhelming probability,
$$
\ABS{\frac{|\Lambda_I|}{n}  -   \mu_\star (I) } = o(1).
$$
\end{cor}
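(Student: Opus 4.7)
\medskip

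\noindent\textbf{Proof proposal.} My plan is to reduce the counting statement to the convergence of the Stieltjes transform $m_n(z) = \frac{1}{n}\Tr R(z)$ supplied by Theorem \ref{th:mainloc}, and then to apply a standard Helffer--Sj\"ostrand approximation. Set $\eta_{\min} = (\log n)^c n^{-\alpha/(2+\alpha)}$ with $c$ as in Theorem \ref{th:mainloc}, and $m_\star(z) = i s_{1,z}(\widetilde\gamma^\star_z) = \int (\lambda - z)^{-1} d\mu_\star(\lambda)$.

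First I would establish that, with overwhelming probability,
$$
\sup_{|E|\leq \tau,\, \eta_{\min} \leq \eta \leq 1} |m_n(E+i\eta) - m_\star(E+i\eta)| = o(1).
$$
Theorem \ref{th:mainloc} (case $p=1$) together with the exchangeability of the coordinates gives $\dE m_n(z) = \dE R_{11}(z) = m_\star(z) + o(1)$ for each such $z$. Lemma \ref{le:concres}, applied to the $1$-Lipschitz functions $w \mapsto \Re w$ and $w \mapsto \Im w$, yields
$
\dP(|m_n(z) - \dE m_n(z)| \geq t) \leq 4\exp(-c n \eta^2 t^2),
$
so taking $t = (\log n)^{c'}/(\sqrt n \eta_{\min})$ the right-hand side is overwhelming; the uniformity over $z$ follows by taking a polynomial net of $z$'s and using the trivial Lipschitz bound $\|\partial_z m_n\| \leq 1/\eta^2$.

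Next, given an interval $I = [E-\eta_0, E+\eta_0] \subset [-\tau,\tau]$ with $\eta_0 \geq \eta_{\min}$, I will choose two smooth cutoffs $\chi_I^\pm: \bR \to [0,1]$ with $\chi_I^- \leq \IND_I \leq \chi_I^+$, equal to $1$ on $[E-\eta_0 \mp \eta_{\min}, E+\eta_0 \pm \eta_{\min}]$ and supported in $[E-\eta_0 \mp 2\eta_{\min}, E+\eta_0 \pm 2\eta_{\min}]$, and satisfying $\|\chi_I^{\pm\,(k)}\|_\infty \leq C_k \eta_{\min}^{-k}$ for $k=0,1,2$. Helffer--Sj\"ostrand then gives, for any Schwartz almost-analytic extension $\tilde\chi_I^\pm$,
$$
\frac{1}{n}\Tr\chi_I^\pm(A) - \int \chi_I^\pm\,d\mu_\star = -\frac{1}{\pi}\int_\bR \int_\bR \bar\partial\tilde\chi_I^\pm(x+iy)(m_n-m_\star)(x+iy)\,dx\,dy,
$$
and an identical formula (up to cutting the $y$-integration at $\eta_{\min}$) can be used when only the control on $m_n-m_\star$ down to $\Im z = \eta_{\min}$ is available; the supplementary contribution from $|y| \leq \eta_{\min}$ is bounded, using $|\bar\partial \tilde\chi_I^\pm| \leq C|y|\|\chi_I^{\pm\,''}\|_\infty$ and $|m_n(z)| + |m_\star(z)| \leq C/\Im z$, by $C\eta_{\min}$. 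Together with step 1 the right-hand side is $o(1)$ with overwhelming probability. Using the monotonicity $n^{-1}\Tr\chi_I^-(A) \leq |\Lambda_I|/n \leq n^{-1}\Tr\chi_I^+(A)$ and the fact (from \cite{BAG1,BDG09,BCC}) that $\mu_\star$ has a bounded density, so $\int\chi_I^\pm d\mu_\star = \mu_\star(I) + O(\eta_{\min})$, the corollary follows.

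The one point requiring care, rather than a genuine obstacle, is producing the uniform-in-$E$ bound on $m_n - m_\star$ down to scale $\eta_{\min}$: the per-point bound coming from Theorem \ref{th:mainloc} is $o(1)$ rather than a summable polynomial rate, so the net argument must be carried out on a grid whose cardinality is polynomial in $n$ and then interpolated by the $1/\eta^2$ Lipschitz bound. Once this is in place the counting estimate is essentially a routine consequence of Helffer--Sj\"ostrand; the substantive content is the local law for the resolvent contained in Theorem \ref{th:mainloc}.
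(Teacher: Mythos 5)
Your overall strategy is the same as the paper's: show $\dE(-iR_{11}(z)) = s_{1,z}(\widetilde\gamma^\star_z)+o(1)$ via Theorem \ref{th:mainloc}, self-average with the concentration Lemma \ref{le:concres}, and then convert control of the Stieltjes transform at scale $\eta_{\min}$ into control of $|\Lambda_I|/n$. The difference is cosmetic: the paper invokes \cite[Lemma 3.7]{BG} to black-box that last conversion step, while you reproduce it in place via Helffer--Sj\"ostrand, and you make explicit the polynomial net plus $1/\eta^2$-Lipschitz interpolation for the uniformity in $z$ that the paper leaves tacit. That is fine.

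There is, however, a concrete gap in the one quantitative estimate you state for the Helffer--Sj\"ostrand remainder. You claim the contribution of the strip $|y|\leq\eta_{\min}$ is $O(\eta_{\min})$ using $|\bar\partial\tilde\chi_I^\pm|\leq C|y|\,\|\chi_I^{\pm\,\prime\prime}\|_\infty$ and $|m_n|+|m_\star|\leq C/|y|$. But with these ingredients the $|y|$ and $1/|y|$ cancel, so the integrand is $O(\eta_{\min}^{-2})$; integrating over $|y|\leq\eta_{\min}$ and over the $x$-support of $\chi_I^{\pm\,\prime\prime}$ (of measure $O(\eta_{\min})$) yields $O(\eta_{\min}^{-2})\cdot\eta_{\min}\cdot\eta_{\min}=O(1)$, which is not $o(1)$. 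The standard repair is to exploit the monotonicity of $y\mapsto y\,\Im m_n(x+iy)$ (equivalently, integrate by parts in $y$): for $0<y\leq\eta_{\min}$ one has $\Im m_n(x+iy)\leq (2\eta_{\min}/y)\,\Im m_n(x+2i\eta_{\min})$, and the local law bounds $\Im m_n(x+2i\eta_{\min})$ by a constant; feeding this back into the integral does give $O(\eta_{\min})$. The same applies to $m_\star$. Without that extra monotonicity input the argument as written does not close. (This is precisely the point that \cite[Lemma 3.7]{BG} handles, which is why the paper routes through it.)

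Once that estimate is corrected, the rest of your argument is sound and equivalent in substance to the paper's proof.
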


\begin{proof}
Define the probability measure $\mu(I) = |\Lambda_I| / n$. By construction, the Cauchy-Stieltjes transform of $\mu$ is $\frac 1 n \tr R$. By \cite[Lemma 3.7]{BG}, up to modifying the value of $c$ and $\tau$, it is sufficient to prove that with overwhelming probability,
$$
\frac 1 n \tr (-i  R_z) = s_{1,z} + o(1)
$$ 
for all $z$ such that $|z| \leq \tau$ and $\Im(z) \geq (\log n)^c n^{ - \alpha / ( 2 + \alpha)}$. By Lemma \ref{le:concres}, with overwhelming probability 
$$
\ABS{ \frac 1 n \tr  (-i R_z) - \dE \frac 1 n \tr ( -i R_z) } \leq (\log n) n ^{-1/2} \eta^{-1} = o(1). 
$$
Since $\dE \frac 1 n \tr ( -i R)  = \dE (-i R_{11} )$, the conclusion follows from  Theorem \ref{th:mainloc}. 
\end{proof}

\begin{cor}
\label{cor:mainloc2}
For $\alpha \in (0,2) \backslash \cA$, there exist $\tau >0$ and $c > 0$ such that if $|z| \leq \tau$,   $\Im(z) \geq (\log n)^c ( n^{ - \alpha / ( 4 + \alpha)} \vee n^{-1/4} ) $, then with overwhelming probability,
$$
\frac{1}{n} \sum_{k=1}^n  |R_{kk} (z) |^{2} \leq c.
$$
\end{cor}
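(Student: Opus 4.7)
The plan is to deduce this corollary by chaining three earlier results: Theorem \ref{th:mainloc} applied with $p=2$ to control the mean, Proposition \ref{fixlem} to bound the limiting quantity $r_{2,z}(\gamma^\star_z)$, and Lemma \ref{le:concres} applied to a truncation of $x\mapsto|x|^2$ to pass from the mean to the typical value.

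First, I would handle concentration. Let $\eta = \Im(z)$ and define $f:\dC\to\dR$ by $f(x) = \min(|x|,\eta^{-1})^2$. A direct check shows $\|f\|_L \leq 2/\eta$, and since $|R_{kk}(z)|\leq 1/\eta$ we have $f(R_{kk}(z)) = |R_{kk}(z)|^2$. The rows $B_i=(A_{ij})_{j\leq i}$ are independent, so Lemma \ref{le:concres} yields
\[
\dP\!\left(\left|\frac{1}{n}\sum_{k=1}^n |R_{kk}(z)|^2 - \dE \frac{1}{n}\sum_{k=1}^n |R_{kk}(z)|^2 \right| \geq t\right) \leq 2\exp\!\left(-\frac{n\eta^4 t^2}{32}\right).
\]
This is overwhelming for any fixed $t>0$ precisely when $n\eta^4/(\log n)^2 \to \infty$, i.e.\ when $\eta \geq (\log n)^c n^{-1/4}$ for $c$ large enough. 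This is one of the two hypotheses in the statement.

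Next, by exchangeability of the diagonal entries of $R(z)$ (the joint law of $A$ is invariant under simultaneous permutation of rows and columns), $\dE\frac{1}{n}\sum_k |R_{kk}(z)|^2 = \dE|R_{11}(z)|^2$. Applying Theorem \ref{th:mainloc} with $p=2$, which requires exactly the other hypothesis $\Im(z)\geq (\log n)^c n^{-\alpha/(4+\alpha)}$, we obtain
\[
\bigl| \dE|R_{11}(z)|^2 - r_{2,z}(\gamma^\star_z) \bigr| = o(1).
\]
Proposition \ref{fixlem} guarantees that $r_{p,z}(\gamma^\star_z)$ is bounded from above uniformly in $|z|\leq \tau$ for each fixed $p>0$, so $\dE|R_{11}(z)|^2 \leq C$ for some constant $C=C(\alpha)$. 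Combining this with the concentration bound above (with, say, $t=1$) gives $\frac{1}{n}\sum_k |R_{kk}(z)|^2 \leq C+1$ with overwhelming probability, which is the desired conclusion.

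There is no substantive obstacle here since all technical work has been done in the preceding sections; the two distinct lower bounds on $\Im(z)$ simply correspond to the two inputs. The bound $(\log n)^c n^{-\alpha/(4+\alpha)}$ is dictated by the local law at $p=2$ (the second moment of the resolvent), while $(\log n)^c n^{-1/4}$ comes from the Lipschitz-type McDiarmid-style concentration: since the Lipschitz constant of the truncated $|x|^2$ scales as $\eta^{-1}$, Lemma \ref{le:concres} yields a variance proxy of order $(n\eta^4)^{-1}$, forcing $\eta \gg n^{-1/4}$.
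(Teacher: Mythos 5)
Your proposal is correct and follows the paper's own proof essentially verbatim: the paper applies Lemma \ref{le:concres} with the truncated function $f(x)=|x|^2\wedge \eta^{-2}$ (identical to your $\min(|x|,\eta^{-1})^2$, with the same Lipschitz constant $2\eta^{-1}$), and then invokes Theorem \ref{th:mainloc} with $p=2$. Your version merely spells out the exchangeability step and the uniform bound on $r_{2,z}(\gamma^\star_z)$ from Proposition \ref{fixlem}, both of which the paper leaves implicit in the phrase ``It remains to apply Theorem \ref{th:mainloc}.''
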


\begin{proof}
Set $z = E + i \eta$. We apply Lemma \ref{le:concres} with $f (x)= |x|^2 \wedge \eta^{-2}$. It is Lispchitz with constant $2 \eta^{-1}$.   We deduce that with overwhelming probability, 
$$
\ABS{ \frac{1}{n} \sum_{k=1}^n  |R_{kk} (z) |^{2}  - \dE |R_{11} (z)|^2 } \leq (\log n) n ^{-1/2} \eta^{-2} = o(1).   
$$
It remains to apply Theorem \ref{th:mainloc}.
\end{proof}

\subsection{Proof of Theorem \ref{th:main}}

Since $\mu_{ \star}$ has  a positive continuous density, there exists $c >0$ such that 
$$
\mu_{\star} (I) \geq c |I|
$$
for any interval $I \subset [-\tau,\tau]$.  Theorem \ref{th:main} is an immediate consequence of Lemma \ref {le:IPRres}, Corollary \ref{cor:mainloc} and Corollary \ref{cor:mainloc2}. \qed


 \bibliographystyle{amsplain}

\bibliography{bib}

\paragraph{Acknowledgments}
This work has benefited from insightful discussions  with Serban Belinschi and Benjamin Schlein. It is a pleasure to thank them.

\end{document}